\documentclass[11pt,a4paper]{article}
\usepackage{fancyhdr}
\usepackage[british]{babel}

\usepackage{amssymb, amsmath, amsthm, amsfonts, enumerate,bm, nicefrac}
\usepackage{amsmath,setspace,scalefnt}
\usepackage[usenames,dvipsnames,svgnames,table]{xcolor}
\usepackage{graphicx,tikz,caption,subcaption,color}
\usetikzlibrary{patterns}
\usetikzlibrary{shapes}
\usepackage{fullpage} 
\usepackage[colorlinks=true,anchorcolor=blue,filecolor=blue,linkcolor=red,urlcolor=blue,citecolor=blue]{hyperref}  

\usepackage{enumitem,verbatim}%,itemize
\usepackage{multicol}
\usepackage{float}
\usepackage{cleveref}
%\linespread{1.02} 
\usepackage[margin=2.49cm]{geometry}

\definecolor{gray}{rgb}{0.25, 0.25, 0.25}

\usepackage{booktabs}  %% For making three-line Table 

\counterwithin{figure}{section}

% ------   Theorem Styles -------
\newtheorem{theorem}{Theorem}[section]
\newtheorem{lemma}[theorem]{Lemma}
\newtheorem{claim}[theorem]{Claim}
\newtheorem{cor}[theorem]{Corollary}

\newtheorem{conj}[theorem]{Conjecture} 

\newtheorem*{observation*}{Observation}

\newtheorem{problem}[theorem]{Problem}
\newtheorem*{problem*}{Problem}

\newtheorem*{question*}{Question}

\newenvironment{definition*}
  {
   \innerdefinition}
  {\endinnerdefinition}

\theoremstyle{definition}

\newtheorem*{defn*}{Definition}
\newtheorem{example}[theorem]{Example}

\theoremstyle{remark}

%%%%%%%%%%%%%%%%%%%

\newenvironment{poc}{\begin{proof}[Proof of claim]}{\end{proof}}

\usepackage{todonotes}

%project specific ones

%\newcommand{\fh}{\mathsf{FH}}
%\newcommand{\tw}{\mathrm{tw}}

\newcommand*{\abs}[1]{\lvert#1\rvert}

% Comments

% spex

\title{Edge-spectral Tur\'{a}n theorems for color-critical graphs with applications}

\author{
Yongtao Li\thanks{Yau Mathematical Sciences Center, Tsinghua University, Beijing, China. Email: \url{ytli0921@hnu.edu.cn}.}
\and
Hong Liu\thanks{Extremal Combinatorics and Probability Group (ECOPRO), Institute for Basic Science (IBS), Daejeon, South Korea. Email: \url{hongliu@ibs.re.kr}. Supported by the Institute for Basic Science (IBS-R029-C4).}
\and
Shengtong Zhang\thanks{Department of Mathematics, Stanford University, CA, USA. Email: \url{stzh1555@stanford.edu}. Supported by the Craig Franklin Fellowship in Mathematics at Stanford University.}
}
\date{\today}

\begin{document}
\maketitle

\begin{abstract}  
A classical result of Nosal asserts that every $m$-edge graph with spectral radius $\lambda (G)> \sqrt{m}$  contains a triangle.  
A celebrated extension of Nikiforov \cite{Niki2002} states that if $G$ is an $m$-edge graph with $\lambda (G)> \sqrt{(1-\nicefrac{1}{r})2m}$, then $G$ contains a clique $K_{r+1}$. This result implies the Tur\'{a}n theorem and Wilf theorem, and offers a new perspective on the existence of substructures. The edge-spectral conditions are versatile for enforcing substructures, as they can be applied to any sparse graph regardless of its edge density. In this paper, we prove that 
for any color-critical graph $F$ with chromatic number $\chi (F)=r+1\ge 4$, if 
$m$ is sufficiently large and $G$ is an $F$-free graph with $m$ edges, then $\lambda (G)\le \sqrt{(1-\nicefrac{1}{r})2m}$, with equality if and only if $G$ is a regular complete $r$-partite graph. This settles an open problem proposed by Yu and Li \cite{YL2025arxiv} and also gives spectral bounds for graphs forbidding books and wheels. 

Secondly, we establish an asymptotic formula and structural characterization when we forbid an almost-bipartite graph $F$, where $F$ is called almost-bipartite if it can be made bipartite by removing at most one edge. As applications, we determine the unique $m$-edge spectral extremal graph for every integer $m$ when avoiding certain substructures, including complete bipartite graphs plus an edge, cycles plus an edge, and theta graphs, etc. Our results resolve an open problem proposed by Li, Zhao and Zou \cite{LZZ2024}, as well as two conjectures posed by Liu and Li \cite{LL2025laa}. 
The arguments in our proofs are based on the edge-spectral stability method recently established by the authors. In addition, we develop some new spectral techniques, including the stability result for the Perron--Frobenius eigenvector. 
\end{abstract}

%\begin{center} \emph{In memory of Prof. Vladimir Nikiforov} \end{center}

%[HL]: Let's display key words and MS code only when the journal requires them.
%{{\bf Key words:}   Extremal graph theory; Eigenvalues; Spectral radius. }

%{{\bf 2010 Mathematics Subject Classification.}  05C35; 05C50.}

%\newpage 
%\tableofcontents % 插入目录

\section{Introduction}

Extremal graph theory, a central branch of graph theory, examines the maximum possible size or structure of graphs under certain constraints. %Its foundations were laid by pivotal results like the Tur\'{a}n theorem and the Erd\H{o}s--Stone--Simonovits theorem, with later advanced tools including the probabilistic method, Szemer\'{e}di's regularity lemma, and modern developments in hypergraph and sparse graph settings; see, e.g.,  \cite{FS13,SS2019}. 
%Extremal graph theory drives the mathematical innovation through powerful techniques like flag algebras and graph limits, as well as the resolution of long-standing conjectures such as those proposed by Paul Erd\H{o}s; see \cite{Sim2013}. 
The field has deep ties to combinatorics, computer science, and additive number theory; see \cite{GTAC}. A graph $G$ is called {\it $F$-free}  if it does not contain a subgraph isomorphic to $F$. 
 The {\em Tur\'{a}n number} of $F$, denoted by $\mathrm{ex}(n, F)$, is defined as the maximum number of edges
  in an $n$-vertex $F$-free graph.
  An $F$-free graph with the maximum number of edges is called an {\em extremal graph} for $F$. 
 The Tur\'{a}n graph $T_{n,r}$ is a complete $r$-partite graph on $n$ vertices whose part sizes are as equal as possible. 
 The well-known Tur\'{a}n theorem states that
if $G$ is an $n$-vertex $K_{r+1}$-free graph,
then $e(G)\le e(T_{n,r})$, 
with equality if and only if $G=T_{n,r}$; in particular, $e(G)\le \left(1-\frac{1}{r} \right) \frac{n^2}{2}.$ 
A celebrated extension of Tur\'{a}n's theorem 
is attributed to Erd\H{o}s, Stone and Simonovits \cite{ES46,ES66}.

\begin{theorem}[Erd\H{o}s--Stone--Simonovits \cite{ES46,ES66}]
  \label{thm-ESS} 
  For any graph $F$ with chromatic number $\chi (F)=r+1\ge 2$, 
 if $G$ is an $F$-free graph on $n$ vertices, then 
$ e(G)\le \left( 1- \frac{1}{r} + o(1) \right)\frac{n^2}{2}$.  
\end{theorem}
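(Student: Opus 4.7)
The plan is to deduce this upper bound from the stronger supersaturated form of the Erd\H{o}s--Stone theorem: for every integers $r \geq 1$, $s \geq 1$ and every $\eps > 0$, there exists $n_0 = n_0(r,s,\eps)$ such that every graph $G$ on $n \geq n_0$ vertices with $e(G) \geq (1 - 1/r + \eps) n^2/2$ contains a copy of $K_{r+1}(s)$, the complete $(r+1)$-partite graph with $s$ vertices in each part. The reduction to the stated theorem is immediate: since $\chi(F) = r+1$, one can embed $F$ into $K_{r+1}(s)$ with $s = |V(F)|$ by placing each color class of $F$ into a distinct part. Thus any $F$-free graph is automatically $K_{r+1}(s)$-free, and the upper bound $e(G) < (1 - 1/r + \eps) n^2/2$ holds for all sufficiently large $n$.

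To prove the supersaturated statement, I would apply Szemer\'edi's regularity lemma together with an embedding (counting) lemma for regular pairs. Fix a small constant $d > 0$ depending on $\eps$ and choose $\delta > 0$ small compared to $d, r, s, \eps$. Applying the regularity lemma to $G$ yields a $\delta$-regular partition $V_0, V_1, \ldots, V_t$. Construct the reduced graph $R$ on $[t]$ by joining $i \sim j$ whenever the pair $(V_i, V_j)$ is $\delta$-regular with edge density at least $d$. A routine edge-counting argument, discarding edges lying within clusters, incident to $V_0$, in irregular pairs, or in pairs of density less than $d$, shows that $e(R) \geq (1 - 1/r + \eps/2)\binom{t}{2}$ once $d, \delta$ are small enough. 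By Tur\'an's theorem, $R$ then contains a $K_{r+1}$ supported on clusters $V_{i_1}, \ldots, V_{i_{r+1}}$; the standard embedding lemma for $\delta$-regular pairs of density at least $d$ produces the desired $K_{r+1}(s)$ inside $G$ whenever each $|V_{i_j}|\approx n/t$ exceeds a threshold depending only on $r, s, d$.

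The main technical obstacle is coordinating the three parameters $d, \delta, t$ so that (i) the reduced graph carries enough density to invoke Tur\'an's theorem, and (ii) each cluster is simultaneously large enough for the embedding lemma to extract $s$ vertices from each part of the $K_{r+1}$. This is standard but quantitatively delicate. An alternative elementary route, due to Erd\H{o}s and Stone, bypasses the regularity lemma entirely by induction on $r$: the base case $r = 1$ is the K\H{o}v\'ari--S\'os--Tur\'an theorem, and in the inductive step one first locates a copy of $K_r(t)$ for some $t \gg s$ via the induction hypothesis, then argues via pigeonhole that the common neighborhoods of typical $r$-tuples of its vertices are large enough to contribute $s$ further vertices forming the missing part. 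Either route yields the conclusion, and the resulting quantitative dependence of $n_0$ on $\eps$ is the only real point of variation between them.
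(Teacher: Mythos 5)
The paper cites Theorem \ref{thm-ESS} as a classical black-box result and gives no proof, so there is no in-paper argument to compare against; your task was effectively to supply a standard proof, and you do so correctly. The reduction from $F$-free to $K_{r+1}(s)$-free via $\chi(F)=r+1$ and $s=|V(F)|$ is exactly right, and both routes you sketch to the supersaturated statement --- Szemer\'edi regularity with a reduced graph and Tur\'an's theorem plus the embedding lemma, or the original Erd\H{o}s--Stone induction on $r$ with K\H{o}v\'ari--S\'os--Tur\'an as the base case --- are the standard correct arguments. Your account of the regularity route is accurate in outline, including the parameter bookkeeping caveat. The one place where the sketch is thinnest is the inductive Erd\H{o}s--Stone step: locating ``large common neighborhoods of typical $r$-tuples'' is where the real work lies, and one needs to pass from a copy of $K_r(t)$ with $t$ much larger than $s$ to one where enough vertices outside have $\geq s$ neighbors in each part, which requires a more careful double-counting argument than a bare pigeonhole invocation; but at the level of a proof plan this is acceptable shorthand for a well-documented argument.
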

  
The {\it adjacency matrix} of an $n$-vertex graph 
$G$ is defined as $A_G=[a_{i,j}]_{i,j=1}^n$, 
where $a_{i,j}=1$ if $ij\in E(G)$, and $a_{i,j}=0$ otherwise. 
The {\it spectral radius} $\lambda (G)$ is defined as the maximum modulus of eigenvalues of $A_G$. 
 Since $A_G$ is a non-negative matrix,  
the Perron--Frobenius theorem implies that $\lambda (G)$ is actually a largest eigenvalue of $A_G$.  
The investigation of spectral graph theory significantly contributes to advancements in graph theory and combinatorics; 
see \cite{Huang2019,JTYZZ2021} for recent breakthroughs. 
Spectral extremal graph theory studies the connections between structure of extremal graphs and eigenvalues of their associated matrices. This research area has led to natural spectral strengthenings of classical extremal graph theorems. 
In 1986,  Wilf \cite{Wil1986} proved that for every $n$-vertex $K_{r+1}$-free graph $G$, we have 
$\lambda (G)\le \left(1-\frac{1}{r} \right)n.$ 
In 2009, Nikiforov \cite{Niki2009cpc} showed a spectral extension on Theorem \ref{thm-ESS}.

 \begin{theorem}[Nikiforov \cite{Niki2009cpc}] 
 \label{thm-spec-ESS} 
For any graph $F$ with chromatic number $\chi (F)=r+1\ge 2$, 
if $G$ is an $F$-free graph on $n$ vertices, then 
$ \lambda (G)\le \left( 1-\frac{1}{r} + o(1)\right) n.$
\end{theorem}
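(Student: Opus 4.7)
The plan is to reduce the theorem to the Erd\H{o}s--Stone--Simonovits theorem (Theorem~\ref{thm-ESS}). Setting $t := |V(F)|$ and using $\chi(F) = r+1$, one partitions $V(F)$ into $r+1$ independent sets of size at most $t$, embedding $F$ into the balanced complete $(r{+}1)$-partite graph $K_{r+1}(t, \ldots, t)$. Thus any graph containing $K_{r+1}(t, \ldots, t)$ contains $F$, and it suffices to prove the bound for $F = K_{r+1}(t, \ldots, t)$. Fix $\eps > 0$; I argue by contradiction, assuming there exist arbitrarily large $F$-free graphs $G$ on $n$ vertices with $\lambda(G) > (1 - \nicefrac{1}{r} + \eps)\, n$.

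The heart of the argument is a \emph{spectral cleaning lemma}: any such $G$ contains a subgraph $H$ on $m \ge c(\eps, r)\, n$ vertices with $\delta(H) \ge (1 - \nicefrac{1}{r} + \nicefrac{\eps}{2})\, m$. To produce $H$ I would work with the Perron eigenvector $x$ of $G$ normalized so that $\|x\|_2 = 1$; the Rayleigh identity then reads $\lambda(G) = 2 \sum_{uv \in E(G)} x_u x_v$. The crucial inequality is that for any $S \subseteq V(G)$ with $s := \sum_{v \in S} x_v^2$, the restriction of $x$ to $V(G) \setminus S$ yields $\lambda(G - S) \ge \lambda(G)\,(1 - 2s)/(1 - s)$. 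An iterative procedure that alternates between removing vertices of low current degree (whose eigenvector entries are controlled by the eigenvalue equation $\lambda(G)\, x_v = \sum_{u \sim v} x_u$) and removing vertices of small eigenvector entry, combined with a potential-function argument tracking the cumulative eigenvector mass on deleted vertices, the current edge count, and the current vertex count, shows that the process terminates with a subgraph on $\Omega(n)$ vertices of the desired minimum degree.

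Given such an $H$, its edge count satisfies $e(H) \ge \delta(H)\, m / 2 > (1 - \nicefrac{1}{r} + \nicefrac{\eps}{4}) \binom{m}{2}$ for $m$ large. Since $m \to \infty$ as $n \to \infty$, Theorem~\ref{thm-ESS} applied to $F = K_{r+1}(t, \ldots, t)$, which has chromatic number $r+1$, forces $F \subseteq H \subseteq G$, contradicting the $F$-freeness of $G$.

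The hard part is the cleaning lemma. Its subtlety lies in the fact that low-degree vertices need not have small eigenvector entries, and vice versa, so neither a pure ``delete low-degree vertices'' procedure nor a pure ``delete low-weight vertices'' procedure suffices: the former risks dropping the spectral radius below the threshold, while the latter may fail to attain the desired minimum degree. Delicately balancing these two deletion rules through the potential-function argument above is where the real work lies.
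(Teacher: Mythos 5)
The paper does not prove this theorem; it is quoted as a black box from Nikiforov's 2009 CPC paper, so there is no in-paper proof to compare against.

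Your overall architecture (reduce to $F=K_{r+1}[t]$, extract a dense linear-sized subgraph, invoke Erd\H{o}s--Stone--Simonovits) is sensible, and the two easy ingredients -- the reduction, and the final edge-count computation -- are fine. The auxiliary inequality $\lambda(G-S)\ge\lambda(G)\frac{1-2s}{1-s}$ is also correct. But the whole weight of the argument rests on the \emph{spectral cleaning lemma}, and that lemma is not proved: the alternating deletion procedure with a ``potential function'' is only gestured at, and when I try to fill it in it does not go through. The two deletion rules work against each other in exactly the way you worry about, and the eigenvector estimates you cite do not resolve the tension. Concretely, for a vertex of degree $d_v<D$ with $D=\Theta(n)$ (which is what a ``low-degree'' vertex looks like when the target is $(1-1/r+\eps/2)|V(H)|$), the eigenvalue equation together with Cauchy--Schwarz gives only $x_v\le\sqrt{d_v}/\lambda=O(1/\sqrt{n})$. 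That is the \emph{typical} magnitude of an entry of a unit Perron vector spread over $\Theta(n)$ vertices, so deleting $\Omega(n)$ such vertices can destroy a constant fraction of $\sum x_v^2$, and your inequality on $\lambda(G-S)$ gives no useful control. Conversely, deleting low-$x_v$ vertices does not lower-bound degrees in the remaining graph: from $\lambda x_v=\sum_{u\sim v}x_u$ one only gets $d_{G[W]}(v)\ge \alpha(\lambda-d_G(v))$ when $W=\{u:x_u\ge\alpha\}$, which is vacuous when $d_G(v)\ge\lambda$. I am not even convinced the cleaning lemma is true with the tight $\eps/2$ loss you assert -- the crude extraction (delete vertices of degree below the average $2m/n$, using $2m\ge\lambda^2\ge c^2n^2$) only yields min-degree ratio on the order of $c^2/2$, far below $c-\eps/2$ -- and no stronger mechanism is exhibited. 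Nikiforov's actual proof avoids this difficulty by a quantitative clique-counting/blow-up argument that produces a complete $(r+1)$-partite subgraph with parts of size $\Theta(\log n)$, a conclusion much stronger than what you need and obtained without any min-degree cleaning of the type you propose. As it stands, your proposal identifies the right target but leaves the decisive step unproved, and the sketch offered for that step appears to fail.
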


\subsection{Edge-spectral extremal graph problem}

In this paper, we study the spectral extremal results for graphs with given number of edges, instead of the number of vertices. For simplicity, we usually ignore the possible isolated vertices if there are no confusions. It is well-known that $\lambda (G)< \sqrt{2m}$ for every graph $G$ with $m$ edges.  
There is a rich history to the problem of characterizing graphs that attain the maximal spectral radius for a given number of edges. 
 Tracing back to 1985, Brualdi and Hoffman \cite{BH1985} proved that if $G$ has $m\le {k \choose 2}$ edges for some integer $k\ge 2$, then $\lambda (G)\le k-1$. This result was extended by Stanley \cite{Sta1987} who showed that $\lambda (G)\le \frac{1}{2} 
 \left(\sqrt{8m+1} -1 \right)$, with equality if and only if $m={k \choose 2}$ and $G$ is the complete graph $K_k$. 
We refer to \cite{Hong1988,HSF2001} for related results.   

The above-mentioned bounds can be significantly improved for graphs with bounded clique number.  
In 2002, Nikiforov \cite{Niki2002} showed that if $G$ is a $K_{r+1}$-free graph with $m$ edges, then 
\begin{equation}
    \label{eq-Niki-2002-cpc}
    \lambda^2 (G)\le {\left(1- \frac{1}{r} \right)2m}.
\end{equation}
The extremal graphs attaining the equality in  (\ref{eq-Niki-2002-cpc}) are complete bipartite graphs for $r=2$, or uniquely a regular complete $r$-partite graph for $r\ge 3$; see \cite{Niki2006-walks} for details.

The result (\ref{eq-Niki-2002-cpc}) implies both Tur\'{a}n's bound and Wilf's bound by using $\lambda (G)\ge \frac{2m}{n}$. So (\ref{eq-Niki-2002-cpc}) can be viewed as a robust spectral extension on Tur\'{a}n's theorem. 
We note that the edge-spectral assumption offers greater applicability and versatility: the edge-spectral version not only implies the vertex-spectral version, but can also be applied to sparse graphs of any edge density. For example, in the case $r=2$, setting $G=K_2\vee \frac{m-1}{2}K_1$, the join of $K_2$ with $\frac{m-1}{2}$ isolated vertices, we see that $G$ is a sparse graph with $\lambda (G) > \sqrt{m}$ and $G$ has many triangles, but $\lambda (G) \ll n/2$ and $e(G) \ll n^2/4$.  
In light of the above difference and advantage, 
it generated great interest in studying the maximum spectral radius for $F$-free graphs with a given number of edges.

In 2009, Nikiforov \cite{Niki2009} showed that if 
$G$ is an $m$-edge graph with $\lambda (G)> \sqrt{m}$, then $G$ contains not only a triangle, but also a $4$-cycle. In 2023, Ning and Zhai \cite{NZ2021} studied the spectral supersaturation for triangles and proved that such a graph $G$ has at least $\lfloor \frac{\sqrt{m}-1}{2}\rfloor$ triangles. Solving a problem of Ning and Zhai \cite{NZ2021b}, the authors \cite{LLZ2024-book-qua} proved that $G$ also has at least $(\frac{1}{8}-o(1))m^2$ copies of $C_4$. 
Moreover, the authors \cite{LLZ2024-book-qua} extended  that if $\lambda^2(G) > (1-\frac{1}{r})2m$, then there are $\Omega_r(m^{\frac{r-1}{2}})$ copies of $K_{r+1}$ in $G$. 
Recently, the authors \cite{LLZ2025-part-1} established an asymptotic formula for any general  graph $F$, and provided a unified extension of Theorems \ref{thm-ESS} and \ref{thm-spec-ESS}.

\begin{theorem}[Li--Liu--Zhang \cite{LLZ2025-part-1}] \label{thm:LLZ2025}
For any graph $F$ with chromatic number $\chi (F) = r+1\ge 3$, if $G$ is an $F$-free graph with $m$ edges, then 
$\lambda^2 (G)\le \left(1-\frac{1}{r} + o(1)\right)2m.$
\end{theorem}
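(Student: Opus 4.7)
The plan is to deduce this edge-spectral bound from Nikiforov's corresponding bound~\eqref{eq-Niki-2002-cpc} for $K_{r+1}$-free graphs via a supersaturation/removal argument, using Theorem~\ref{thm-spec-ESS} as an auxiliary tool. Fix $\eps>0$ and suppose for contradiction that $G$ is $F$-free with $m$ edges and $\lambda(G)^{2}\ge (1-\tfrac1r+\eps)\,2m$ for $m$ arbitrarily large.

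My first step would be to \emph{sparsify the vertex set}: reduce $G$ to a subgraph $G_{1}$ on $N=O_{\eps}(\sqrt{m})$ vertices while retaining $\lambda(G_{1})\ge (1-\tfrac{\eps}{10})\lambda(G)$. Let $x$ be the Perron eigenvector of $G$ normalised so that $\|x\|_{2}=1$. The single-vertex perturbation bound
\[\lambda(G)-\lambda(G-v)\ \le\ \frac{\lambda(G)\,x_{v}^{2}}{1-x_{v}^{2}},\]
obtained by testing $x-x_{v}e_{v}$ in the Rayleigh quotient of $G-v$, lets one peel off vertices of small Perron weight one at a time. The threshold is tuned so that the retained set $S$ has $|S|\le O_{\eps}(\sqrt m)$ while the Perron mass it carries is $1-o(1)$.

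Once the vertex set is sparsified, I would \emph{remove a few edges to kill every $K_{r+1}$}. Since $\chi(F)=r+1$, $F$ embeds in the balanced complete $(r+1)$-partite graph $K_{r+1}(t,\ldots,t)$ for some $t=t(F)$, so $G_{1}$ is also $K_{r+1}(t,\ldots,t)$-free. The Erd\H os--Simonovits supersaturation theorem together with the graph removal lemma then produce a set $E^{*}$ of at most $o(N^{2})=o(m)$ edges such that $G_{2}:=G_{1}-E^{*}$ is $K_{r+1}$-free; here the bound $N=O(\sqrt m)$ is essential, since otherwise $o(N^{2})$ could exceed $m$ and the reduction would be vacuous. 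Applying~\eqref{eq-Niki-2002-cpc} to $G_{2}$ gives $\lambda(G_{2})^{2}\le (1-\tfrac1r)\,2e(G_{2})\le (1-\tfrac1r)\,2m$. Because the adjacency matrix of $E^{*}$ has spectral radius at most $\sqrt{2|E^{*}|}=o(\sqrt m)$, Weyl's inequality gives $\lambda(G_{1})\le\lambda(G_{2})+o(\sqrt m)$, and chaining all the estimates yields $\lambda(G)^{2}\le (1-\tfrac1r)\,2m+o(m)$, contradicting our hypothesis for large $m$.

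The \emph{main obstacle} is the vertex sparsification step. Naive iteration of the single-vertex perturbation bound degrades rapidly once the Perron weight spreads over many vertices. What should save the argument is a dichotomy: either the Perron vector concentrates on $O_{\eps}(\sqrt m)$ high-weight vertices and the rest can be pruned cheaply, or $x$ is so flat that a direct Rayleigh-quotient computation, combined with Theorem~\ref{thm-spec-ESS} applied to the induced subgraph on the high-degree vertices, already forces $\lambda(G)^{2}\ll 2m$ and contradicts our assumption. Making this dichotomy quantitative while keeping the total spectral loss below $o(\sqrt m)$ is the technical heart of the proof; everything else is essentially bookkeeping between Nikiforov's clique bound and the removal lemma.
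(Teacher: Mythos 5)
The paper cites Theorem~\ref{thm:LLZ2025} from \cite{LLZ2025-part-1} without reproducing its proof here, so I assess your sketch on its own terms. The final three steps (blow-up argument, removal lemma, Nikiforov's bound~\eqref{eq-Niki-2002-cpc}, and Weyl's inequality) chain together correctly once you have an $F$-free graph $G_1$ on $N=O_\eps(\sqrt m)$ vertices with $\lambda(G_1)\ge(1-\eps/10)\lambda(G)$; the numerology closes. The problem is the sparsification step, and it is a genuine gap, not bookkeeping.

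The unconditional claim that one can always peel down to $O_\eps(\sqrt m)$ vertices while keeping Perron mass $1-o(1)$ is simply false: for $K_{1,m}$ every induced subgraph on $O(\sqrt m)$ vertices has spectral radius $O(m^{1/4})\ll\sqrt m=\lambda(K_{1,m})$, even though more than half the Perron mass sits on the $m$ low-degree leaves. The star fails your contradiction hypothesis (it has $\lambda^2=m=(1-\tfrac12)2m$ exactly), so what you actually need is the implication ``$\lambda^2>(1-\tfrac1r+\eps)2m$ $\Rightarrow$ the Perron mass concentrates on $O_\eps(\sqrt m)$ vertices,'' and that is precisely what your dichotomy is supposed to deliver but does not. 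The second branch as written (``$x$ is so flat that a direct Rayleigh-quotient computation, combined with Theorem~\ref{thm-spec-ESS} applied to the induced subgraph on the high-degree vertices, already forces $\lambda^2\ll 2m$'') is not a coherent argument: flatness of the Perron vector over $\Theta(\sqrt m)$ vertices is perfectly compatible with $\lambda^2=(1-o(1))\cdot 2m$ (take $G=K_n$, $m=\binom n2$); and the bound Theorem~\ref{thm-spec-ESS} would give on $G[S]$, namely $(1-\tfrac1r+o(1))|S|$, is only comparable to the target $\sqrt{(1-\tfrac1r)2m}$ when $|S|$ equals $\sqrt{2m/(1-\tfrac1r)}$ up to $(1+o(1))$, a constraint you have no control over and which, for $r\ge3$, degrades badly if $|S|$ is a slightly larger multiple of $\sqrt m$. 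Finally, as you note yourself, the one-vertex deletion bound $\lambda(G)-\lambda(G-v)\le\lambda(G)x_v^2/(1-x_v^2)$ does not iterate cleanly because the eigenvector rebalances after each deletion; the right tool is a one-shot restriction $\lambda(G[S])\ge\lambda(G)\cdot\frac{1-2\delta}{1-\delta}$ with $\delta=\sum_{v\notin S}x_v^2$, but then one still needs $\delta=o(1)$ for some $S$ of size $O(\sqrt m)$, which is exactly the unproved concentration claim. In short, the heart of the proof — controlling the cross and tail contributions to the Rayleigh quotient when the mass is not concentrated, along the lines of the Cauchy--Schwarz splitting in Claims~\ref{upper-bound-1}--\ref{upper-bound-2} of this paper — is missing, and the dichotomy as sketched does not substitute for it.
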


 Motivated by the aforementioned result of Brualdi and Hoffman, it is interesting to consider the following spectral Tur\'{a}n type extremal problem for $F$-free graphs with $m$ edges. 

\begin{problem}[Brualdi--Hoffman--Tur\'{a}n type problem]  \label{problem-BHT}
Given a graph $F$ and an integer $m$, 
   what is the maximum spectral radius over all $F$-free graphs on $m$ edges? 
\end{problem}

The investigation of Problem \ref{problem-BHT} has become an active research topic in spectral extremal graph theory. In recent years, there have been a large number of results on this problem for some forbidden graphs. We refer the interested readers to cycles \cite{ZLS2021,LZS2024,LLLY2025}, friendship graphs \cite{LLP2023,CY2024}, fan graphs \cite{YLP2024,LZZ2024} and books \cite{Niki2021,LLZ2024-book-qua} and references therein.

%\section{Main results}  %这个标题是否需要呢？
  
The previously known approach to Problem \ref{problem-BHT} relies on the second moment of spectral radius and the $k$-core method, and 
they are limited to forbidding a graph $F$ with $\chi (F)\le 3$.  
The main contribution of this paper is to develop a new method, which we term {\it the edge-spectral stability method}. %This could be viewed as a significant advancement beyond previous references. 

\subsection{Color-critical graphs}

 A graph $F$ is called {\it color-critical} if it contains an edge whose deletion reduces its chromatic number. Many graphs are color-critical, including e.g. complete graphs and odd cycles.  
 There have been extensive research on extremal problems concerning color-critical graphs~\cite{Sim1966,Mubayi2010,PY2017,RS2018,ZL2022jgt,LL2024-ejc}. 
 Simonovits \cite{Sim1966} proved that 
 if $F$ is color-critical and $\chi (F)=r+1$, 
then the Tur\'{a}n graph $T_{n,r}$ is the unique extremal graph for $F$ when $n$ is sufficiently large. Nikiforov \cite{Niki2009ejc} provided a spectral extension by showing that 
$T_{n,r}$ is also the unique spectral extremal graph for $F$. 

As the first main result, we solve Problem \ref{problem-BHT} for all color-critical graphs. 
We obtain an edge-spectral counterpart of Simonovits' theorem and Nikiforov's theorem. 
Apart from cliques, our result provides the first solution to Problem \ref{problem-BHT} when $\chi (F)\ge 4$,

\begin{theorem}
    \label{thm:color-critical-I}
    Let $F$ be a color-critical graph with $\chi (F)=r+1\ge 4$. For sufficiently large $m$, if $G$ is an $F$-free graph with $m$ edges, then
    $$\lambda^2 (G)\le \left(1- \frac{1}{r}\right) 2m,$$
    with equality if and only if $G$ is a regular complete $r$-partite graph.
\end{theorem}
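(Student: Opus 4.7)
The plan is to combine the asymptotic edge-spectral bound of Theorem~\ref{thm:LLZ2025}, which already gives $\lambda^2(G)\le(1-\tfrac{1}{r}+o(1))2m$ for any $F$-free graph $G$ with $\chi(F)=r+1$, with a stability argument exploiting the color-critical structure of $F$. Suppose for contradiction that $G$ is an $F$-free graph on $m$ edges with $\lambda^2(G)\ge(1-\tfrac{1}{r})2m$ and $G$ is not a regular complete $r$-partite graph. The goal is to derive a contradiction for all sufficiently large $m$.

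\textbf{Step 1 (edge-spectral stability).} First I would prove that if $\lambda^2(G)\ge(1-\tfrac{1}{r}-\delta)2m$ for a sufficiently small $\delta>0$, then $G$ is structurally close to a regular complete $r$-partite graph. Concretely, one produces a partition $V(G)=V_1\cup\cdots\cup V_r$ with $\bigabs{|V_i|-n/r}=o(n)$ for each $i$, all but $o(m)$ edges of $G$ go between distinct parts, and the Perron--Frobenius eigenvector of $G$ is approximately constant on each $V_i$ with the cross-class profile matching that of the balanced complete $r$-partite graph. I expect this \emph{Perron eigenvector stability} to be the main obstacle. It requires a quantitative analysis of the Rayleigh quotient showing that any substantial imbalance among the $|V_i|$, substantial variation of the eigenvector within a class, or substantial number of intra-class edges, strictly decreases $\lambda^2(G)/m$. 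Because only $m$ (not $n$) is controlled, one must also discard possibly many low-degree vertices by a cleaning argument before transferring spectral near-extremality into structural near-extremality.

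\textbf{Step 2 (making $G$ exactly $r$-partite via color-criticality).} With the partition from Step~1 in hand, I would use the color-critical hypothesis to promote approximate $r$-partiteness to exact $r$-partiteness. Write $F=F_0+e$ where $F_0$ is $r$-colorable and $e$ is the color-critical edge. If some class $V_i$ contained an edge $uv$, then since the bipartite subgraphs $G[V_j,V_k]$ with $j\ne k$ are nearly complete after discarding a few atypical vertices, one can greedily embed $F_0$ into $G$ using a proper $r$-coloring of $F_0$, placing the two endpoints of $e$ at $u$ and $v$. This produces a copy of $F=F_0+e$ in $G$, contradicting $F$-freeness. Hence every $V_i$ is independent, so $G$ is $r$-partite.

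\textbf{Step 3 (finishing via Nikiforov's inequality).} Since $G$ is $r$-partite it is $K_{r+1}$-free, and Nikiforov's inequality~(\ref{eq-Niki-2002-cpc}) together with its equality characterisation in~\cite{Niki2006-walks} yield $\lambda^2(G)\le(1-\tfrac{1}{r})2m$, with equality if and only if $G$ is a regular complete $r$-partite graph. This contradicts the choice of $G$ and completes the proof. The crux is therefore Step~1: the new ``stability result for the Perron--Frobenius eigenvector'' announced in the abstract must be developed in a form quantitatively strong enough to feed the color-critical embedding of Step~2.
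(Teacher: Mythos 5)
Your Step~2 contains the decisive gap. You claim to ``produce a partition $V(G)=V_1\cup\cdots\cup V_r$'' in which each $V_i$ is independent and every cross-pair $G[V_j,V_k]$ is nearly complete, and then conclude $G$ is exactly $r$-partite. This is not achievable: the edge-spectral stability theorem gives only $o(m)$ edit-distance to a complete $r$-partite graph $H=K_{U_1,\dots,U_r}$, and this allows a positive fraction of vertices to have anomalous adjacencies. One can certainly define, as the paper does, classes $V_i$ consisting of vertices whose neighborhoods look right (having $\ge\frac{2}{3}|U_j|$ neighbors in each $U_j$, $j\ne i$), and the color-critical embedding you describe does show these $V_i$ are pairwise-disjoint independent sets (this is exactly Claim~\ref{claim:4-4}). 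But those $V_i$ do \emph{not} cover $V(G)$: there remains a residual set $S$ of vertices that are not well-behaved with respect to any choice of $i$, and nothing forces $S$ to be empty or even small in the relevant sense before you have already proven the inequality. Applying Nikiforov's bound in Step~3 therefore only controls the subgraph $G'=G[V_1,\dots,V_r]$, not $G$ itself.

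The actual difficulty — the part the paper spends most of its effort on — is to show that the vertices of $S$ and the edges incident to them cannot push $\lambda(G)$ above the target. The paper does this by decomposing the Rayleigh quotient as $\lambda(G)\le \sum_{uv\in E(G')}2x_ux_v + \sum_{v\in S}x_v\sum_{w\in N(v)}\beta_wx_w$, bounding the first term by $\sqrt{A}\cdot X$ via Nikiforov applied to $G'$ and the second by $\sqrt{(1-X)\cdot 2BX}$ via two Cauchy--Schwarz steps that crucially use the Perron-vector stability (Claims~\ref{claim-bound-square}--\ref{claim:4-6}) to bound $\sum_{w\in N(v)}x_w^2$ for $v\in S$, and then a final Cauchy--Schwarz to get $\sqrt{A}X+\sqrt{2BX(1-X)}\le\sqrt{A+B}$. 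Only after establishing the inequality does one read off the equality case ($X=1$, so $S$ has no edges and $G$ is $r$-partite). Your proposal has the right preliminary ingredients — edge-spectral stability, eigenvector stability, and the color-critical embedding to make the defined classes independent — but the residual-contribution bound is missing, and without it the argument cannot close.
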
 

Theorem \ref{thm:color-critical-I} settles an open problem proposed by Yu and Li \cite[Problem 5]{YL2025arxiv}. 

\medskip 
\noindent 
{\bf Remark.}  
The bound of Theorem \ref{thm:color-critical-I} does not hold for color-critical graphs $F$ with $\chi (F)=3$. 
 For example, setting $F=C_5$, 
we take $G=K_2 \vee \frac{m-1}{2}K_1$, then $G$ is $C_5$-free, 
but $\lambda (G)= \frac{1}{2} (1+\sqrt{4m-3})> \sqrt{m}$. 
In addition, taking $F=B_k$, which is obtained from $k$ triangles sharing an edge, 
Nikiforov \cite{Niki2021} showed that every $B_k$-free graph $G$ satisfies $\lambda (G)\le \sqrt{m}$, where the equality holds if and only if $G$ is any complete bipartite graph, not just for a regular complete bipartite graph. 
The case where $\chi(F) \le 3$ is the subject in our next subsection. 

\medskip

The generalized book $B_{r,k}=K_r \vee I_k$ 
is a graph obtained from $k$ copies of 
$K_{r+1}$ sharing a common $K_r$. 
The case $r=2$ was studied by Nikiforov \cite{Niki2021} and the authors \cite{LLZ2024-book-qua}. 
For $r\ge 3$, we see that $B_{r,k}$ is color-critical and $\chi (B_{r,k})\ge 4$. 
As applications, Theorem \ref{thm:color-critical-I} 
implies the following result, which gives an alternative proof of a conjecture of Li, Liu and Feng \cite[Conjecture 1.20]{LLF2022}.

\begin{cor} \label{thm-Brk}
 Let $r\!\ge\! 3, k\!\ge\! 1\!$ be fixed and $m\!$ be large enough. 
If $G$ is an $m$-edge $B_{r,k}$-free graph, 
then  
$ \lambda^2 (G)\!\le \! \left( 1-\frac{1}{r} \right) 2m$,   
with equality if and only if $G$ is a regular complete $r$-partite graph. 
\end{cor}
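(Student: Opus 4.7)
The plan is to derive this corollary as an immediate consequence of Theorem \ref{thm:color-critical-I}, so the only real work is to check that the generalized book $B_{r,k}$ with $r\ge 3$ satisfies the hypotheses of that theorem, namely that it is color-critical with chromatic number $r+1\ge 4$.

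First, I would determine $\chi(B_{r,k})$. Since $B_{r,k}=K_r\vee I_k$ contains $K_{r+1}$ as a subgraph (take the $K_r$ part together with any single vertex of $I_k$, which exists because $k\ge 1$), we have $\chi(B_{r,k})\ge r+1$. Conversely, coloring the $K_r$ part with $r$ distinct colors and assigning every vertex of $I_k$ one additional common color yields a proper $(r+1)$-coloring, so $\chi(B_{r,k})=r+1$. For $r\ge 3$ this already gives $\chi(B_{r,k})\ge 4$.

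Next, I would verify color-criticality by exhibiting one edge whose deletion strictly decreases the chromatic number. The edges joining $K_r$ to $I_k$ do not work in general (removing a single such edge still leaves every other vertex of $I_k$ adjacent to the entire $K_r$, forcing an $(r+1)$-st color when $k\ge 2$), but an edge inside $K_r$ does work. Indeed, pick any edge $e=u_1u_2$ inside the $K_r$ part. In $B_{r,k}-e$, one may color $u_1$ and $u_2$ with the same color, the remaining $r-2$ vertices of the $K_r$ part with $r-2$ fresh colors, and all of $I_k$ with one further color, giving a proper coloring that uses only $r$ colors. Hence $\chi(B_{r,k}-e)\le r<\chi(B_{r,k})$, so $B_{r,k}$ is color-critical.

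With both conditions verified, I would simply apply Theorem \ref{thm:color-critical-I} with $F=B_{r,k}$: for sufficiently large $m$, any $m$-edge $B_{r,k}$-free graph $G$ satisfies $\lambda^2(G)\le (1-\tfrac{1}{r})2m$, with equality if and only if $G$ is a regular complete $r$-partite graph. There is no genuine obstacle in this proof; all of the content sits inside Theorem \ref{thm:color-critical-I}, and the corollary amounts to recognising that the generalized book fits into the color-critical framework.
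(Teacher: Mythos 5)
Your proposal is correct and follows exactly the same route the paper sketches: verify that $B_{r,k}$ is color-critical with $\chi(B_{r,k})=r+1\ge 4$ for $r\ge 3$, then invoke Theorem \ref{thm:color-critical-I}. You simply spell out the details (computing the chromatic number and exhibiting the critical edge inside the $K_r$ part) that the paper leaves implicit.
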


 Li, Liu and Zhai \cite{LLZ2025-wheels} studied the edge-spectral problem for graphs forbidding the family of all wheels. 
As an application of Theorem \ref{thm:color-critical-I}, we weaken the condition by forbidding a single wheel. We denote $W_{2k+2}=K_1\vee C_{2k+1}$. 
Note that $W_{2k+2}$ is color-critical and $\chi (W_{2k+2})=4$.  

\begin{cor} \label{thm-even-wheel}
For fixed $k$ and sufficiently large $m$, 
if $G$ is a $W_{2k+2}$-free  graph with $m$ edges, then 
$ \lambda (G) \le \sqrt{4m/3}$,   
with equality if and only if $G$ is a regular complete $3$-partite graph. 
\end{cor}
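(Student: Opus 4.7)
The plan is to deduce Corollary~\ref{thm-even-wheel} directly from Theorem~\ref{thm:color-critical-I} applied with $F=W_{2k+2}$ and $r=3$. The whole argument reduces to two short verifications: first, that $W_{2k+2}$ is color-critical with $\chi(W_{2k+2})=4$, so that Theorem~\ref{thm:color-critical-I} is applicable; and second, that a regular complete $3$-partite graph is itself $W_{2k+2}$-free, so that the characterization of equality is nonvacuous and transfers to our setting.

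For the first verification, recall $W_{2k+2}=K_1\vee C_{2k+1}$. Since $\chi(C_{2k+1})=3$ and the apex vertex is adjacent to every vertex of the odd cycle, any proper coloring must use a fourth color on the apex, so $\chi(W_{2k+2})=4$. To check color-criticality, delete any edge of the odd cycle: it becomes a path $P_{2k+1}$, which is bipartite, and hence $K_1\vee P_{2k+1}$ admits a proper $3$-coloring (one color on the apex, two on the path). Thus removing this edge drops the chromatic number from $4$ to $3$, confirming that $W_{2k+2}$ is color-critical.

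For the second verification, observe that the neighborhood of any vertex in a balanced complete $3$-partite graph $K_{s,s,s}$ induces the complete bipartite graph $K_{s,s}$, which is triangle-free and in particular contains no odd cycle. Hence $K_{s,s,s}$ contains no copy of $K_1\vee C_{2k+1}=W_{2k+2}$, for any $k$.

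Combining these two observations with Theorem~\ref{thm:color-critical-I} immediately yields $\lambda^2(G)\le (1-\tfrac{1}{3})\cdot 2m=\tfrac{4m}{3}$ for every $W_{2k+2}$-free graph $G$ with $m$ edges (provided $m$ is sufficiently large), with equality if and only if $G$ is a regular complete $3$-partite graph. There is essentially no obstacle: all of the substantive spectral-stability work and the rigidity of the Tur\'an-type extremal graph are absorbed into Theorem~\ref{thm:color-critical-I}, and the corollary follows by nothing more than verifying the hypotheses of that theorem for the specific forbidden graph $W_{2k+2}$.
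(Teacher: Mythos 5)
Your proof is correct and follows exactly the route the paper intends: verify that $W_{2k+2}=K_1\vee C_{2k+1}$ is color-critical with $\chi(W_{2k+2})=4$, then invoke Theorem~\ref{thm:color-critical-I} with $r=3$. The only remark is that your second verification (that a regular complete $3$-partite graph is $W_{2k+2}$-free) is automatic from chromatic numbers — a $3$-chromatic graph cannot contain a $4$-chromatic subgraph — so the neighborhood argument, while correct, is not needed.
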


Corollary \ref{thm-even-wheel} confirms a conjecture of Yu, Li and Peng \cite[Conjecture 5.3]{YLP2024}.

\subsection{Almost-bipartite graphs}

A graph $F$ is called {\it almost-bipartite} if it can be made bipartite by removing at most one edge. Equivalently, $F$ is color-critical with $\chi (F)=3$ or bipartite. 
Using the edge-spectral stability method, we establish an asymptotic formula for Problem \ref{problem-BHT} when we forbid an almost-bipartite graph $F$. 
As applications, we completely determine the $m$-edge spectral extremal graphs for forbidding classical graphs, including complete bipartite graphs plus an edge, cycles plus an edge, and theta graphs. 
Consequently, we generalize a series of results in \cite{ZLS2021,LZS2024,LZZ2024,LLLY2025}, and settle several problems and conjectures in \cite{LZZ2024,LL2025laa}.
This significantly extends our knowledge of the Brualdi--Hoffman--Tur\'{a}n problem. 
To state our results, we need to fix some notation. 

\begin{defn*} 
For an almost-bipartite graph $F$, let $\mathcal{A}_F$ be the family of subgraphs induced by $\overline{I}:=V(F)\setminus I$, where $I$ ranges over the maximal independent sets of $F$. Let $\mathcal{M}_F$ be the family of graphs that do not contain a subgraph isomorphic to any member of $\mathcal{A}_F$.  
\end{defn*} 

The graphs in $\mathcal{A}_F$ and $\mathcal{M}_F$ may consist of some isolated vertices. 
For example, if $F=K_{s,t}$ with $s\le t$, 
then $\mathcal{A}_F=\{sK_1,tK_1\}$, and $\mathcal{M}_F$ consists of graphs of order less than $s$.   
If $F=K_{s,t}^+$, which is the graph obtained by adding an edge to the partite set of size $s$ of $K_{s,t}$, 
then $\mathcal{A}_F=\{K_2\cup (s-2)K_1, K_{1,t}\}$, 
and $\mathcal{M}_F$ consists of empty graphs of any order and non-empty graphs of order less than $s$. 

In what follows, 
we show a structural characterization of extremal graphs for Problem \ref{problem-BHT} when we forbid an almost-bipartite graph $F$.  
An $m$-edge graph $G$ is called a {\it spectral extremal graph} for $F$ if $G$ achieves the maximal spectral radius over all $m$-edge $F$-free graphs. We say that a vertex $v$ is {\it complete} to a set $A$ if $v$ is adjacent to all vertices of $A$.

\begin{theorem}
    \label{thm:bipartite-rough}
    For any almost-bipartite graph $F$ that is not a star, 
    if $m$ is sufficiently large and $G$ is an $m$-edge spectral extremal graph for $F$,  
    then one of the following holds: 
    \begin{enumerate}
    \item[\rm (a)]
    $G$ is a complete bipartite graph. 

    \item[\rm (b)] 
    $G$ has a vertex partition $A\sqcup C$ such that $|A|< |F|$ and $G[A]$ is a non-empty graph of $ \mathcal{M}_F$, $C$ is an independent set, and all but at most one vertex of $C$ are complete to $A$.
    \end{enumerate}
\end{theorem}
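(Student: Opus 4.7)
The plan is to combine the edge-spectral stability method of \cite{LLZ2025-part-1} with a new Perron--Frobenius eigenvector stability result to analyze an $m$-edge spectral extremal $F$-free graph $G$. Let $\mathbf{x}$ denote its Perron eigenvector and $\lambda := \lambda(G)$ its spectral radius.

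First, I would establish the asymptotic formula $\lambda = (1+o(1))\sqrt{m}$. Since $F$ is almost-bipartite and not a star, an $F$-free construction---a balanced complete bipartite graph (if $F$ is bipartite) or $K_2 \vee \overline{K_t}$ (if $\chi(F)=3$)---attains $\lambda \geq (1-o(1))\sqrt{m}$. Combining this with Theorem \ref{thm:LLZ2025} when $\chi(F)=3$ (or an analogous edge-spectral bound in the bipartite case) yields the matching upper bound. The stability version of this upper bound then gives that $G$ is \emph{asymptotically bipartite}: there is a partition $V(G) = V_1 \sqcup V_2$ with $e(G[V_1]) + e(G[V_2]) = o(m)$.

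Next, using the Perron eigenvector stability, I would show that the mass of $\mathbf{x}$ concentrates on a bounded set of ``heavy'' vertices. Define $A := \{v : x_v \geq \delta\}$ for a constant $\delta = \delta(F)>0$, and $C := V(G) \setminus A$. Then $|A|$ is bounded in terms of $F$, and the entries $x_v$ for $v \in C$ are asymptotically uniform. Using the eigenequation $\lambda x_u = \sum_{v \sim u} x_v$ and spectral extremality, any edge inside $C$ can be switched to an edge incident to a vertex of $A$ so as to strictly increase $\lambda$; this forces $C$ to be independent. The near-bipartiteness similarly places $A$ on one side of the near-bipartition.

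The main obstacle is showing that all but at most one vertex of $C$ is complete to $A$. For each ``defective'' $v \in C$ missing a neighbor in $A$, I would perform a local compression that redirects $N(v)$ to be $A$ while adjusting a few other edges to preserve $|E(G)| = m$. Tracking the change in $\lambda$ via the eigenequation, the asymptotic uniformity of $x_v$ across $v \in C$ (from the previous step) guarantees that this modification strictly increases $\lambda$ unless $v$ is the unique defective vertex, contradicting extremality. Once this is established, $F$-freeness pins down $G[A] \in \mathcal{M}_F$: if $G[A]$ contained a copy of $F[V(F)\setminus I] \in \mathcal{A}_F$ for some maximal independent set $I$ of $F$, one could embed $I$ into $C$ (which is independent and complete to $A$) to form a copy of $F$ in $G$. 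The dichotomy ``$G[A] = \emptyset$'' versus ``$G[A] \neq \emptyset$'' then yields cases (a) and (b), with the bound $|A|<|F|$ in (b) obtained by a final spectral extremality check that discards any unnecessary vertices from $A$.
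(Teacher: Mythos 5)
Your proposal follows the same broad strategy as the paper — edge-spectral stability to get a near-bipartition, Perron eigenvector stability to control the coordinates, and local compressions to force the structure — but there are two concrete gaps in the last and most delicate steps.

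First, your compression for the ``at most one defective vertex'' claim is the wrong operation. You propose to ``redirect $N(v)$ to be $A$ while adjusting a few other edges to preserve $|E(G)|=m$.'' This is underspecified: adding $|A|-|N_A(v)|$ edges at $v$ forces you to delete that many edges elsewhere, and deleting edges at other vertices of $C$ creates new defective vertices, so the argument does not terminate, and the spectral inequality would have to account for all those deletions. The paper instead takes \emph{two} defective vertices $u,v\in C$ with $d_u\geq d_v$, deletes a single edge at $v$, adds a single edge from $u$ into $A$, and then proves $\lambda$ strictly increases by \emph{optimizing} the new test vector's coordinates $y_u,y_v$ subject to $y_u^2+y_v^2=x_u^2+x_v^2$; the resulting inequality only uses that the coordinates $x_a$ for $a\in A$ are nearly equal to $\frac{1}{\sqrt{2|A|}}$ (via Lemma \ref{lem:Perron-Frobenius-stability}). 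Your argument instead leans on ``asymptotic uniformity of $x_v$ across $v\in C$,'' which is neither established nor true in the needed form — a defective vertex $v$ has a strictly smaller eigenvector entry than a complete vertex — and which is not actually required if one sets up the switch as the paper does.

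Second, your definition $A:=\{v:x_v\geq\delta\}$ and the route to $|A|<|F|$ are different from, and weaker than, the paper's. The paper gets $A$ directly as the small side of the near-bipartition from Theorem \ref{thm:nikiforov-stability}, then caps $|A|$ by invoking Theorem \ref{thm:color-critical-when-stable} with $r=2$ (if $|A|$ were too large, $G$ would have $\lambda(G)\leq\sqrt m$ and hence be complete bipartite, landing in case (a)). Crucially, the bound $|A|<|F|$ is then not a ``spectral extremality check'': once $C$ is shown independent and $G[A]$ non-empty, $G[A]\vee tK_1\subseteq G$ forces $G[A]\in\mathcal{M}_F$ by Lemma \ref{lem:bipartite-free}, and Lemma \ref{fact:less-vertices} immediately gives $|A|<|F|$ since non-empty members of $\mathcal{M}_F$ have fewer than $|F|$ vertices. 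Defining $A$ by an eigenvector threshold requires separately proving that this $A$ coincides (up to negligible error) with the stability partition, that all vertices outside it can be pushed to one side, and that it is not too large — work the paper avoids by choosing the right $A$ from the start. Your embedding argument for $G[A]\in\mathcal{M}_F$ (lift a member of $\mathcal{A}_F$ in $G[A]$ together with an independent set in $C$) is essentially Lemma \ref{lem:bipartite-free} in disguise and is correct.
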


\noindent 
\textbf{Remark.} 
If $F$ is a star $K_{1, t}$, then the $F$-free graph $G$ 
satisfies $\Delta (G)\le t-1$, which gives $\lambda (G)\le t-1$, with one of the extremal graphs being the disjoint union of copies of $K_{t}$.

\medskip

 Theorem \ref{thm:bipartite-rough} gives an alternative proof of a conjecture proposed by Zhai, Lin and Shu \cite[Conjecture 5.2]{ZLS2021} on books, which was confirmed by Nikiforov \cite{Niki2021} and the authors \cite{LLZ2024-book-qua} in a strong sense. 
Indeed, if $F$ is a book $B_k$, then $\mathcal{A}_F=\{K_2,K_{1,k}\}$ and  
each graph of $ \mathcal{M}_F$ is empty. Applying Theorem \ref{thm:bipartite-rough}, we see that 
the $m$-edge spectral extremal graphs for $B_k$ are complete bipartite graphs. 
Conversely, assume that the $m$-edge spectral extremal graphs for $F$ are complete bipartite graphs, then the extremal spectral radius equals to  $\sqrt{m}$, so 
$F$ is a subgraph of $B_k$ for some $k$. Otherwise, if $F$ is not a subgraph of any book, then $G:=K_{2}\vee  \frac{m-1}{2}K_1$ is an $F$-free graph with $\lambda(G) > \sqrt{m}$, a contradiction. So $F$ must be a subgraph of a book. 

\medskip

Our second main result is to prove an asymptotic result for almost-bipartite graphs. 

\begin{theorem} \label{thm-determine-M}
    Let $F$ be an almost-bipartite graph that is not a star. If $m$ is sufficiently large and 
   $G$ is an $m$-edge $F$-free graph with maximal spectral radius, then 
    $$\lambda(G) = \sqrt{m} + \max_{M \in \mathcal{M}_F} \frac{e(M)}{v(M)} + O(m^{-1/2}).$$
    Furthermore, there is a partition $V(G)=A\sqcup C$ such that $G[A]$ is isomorphic to $M$ that achieves the maximum above, $C$ is independent, and all but at most one vertex of $C$ are complete to $A$. 
 \end{theorem}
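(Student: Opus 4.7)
The plan is to combine the structural classification in Theorem~\ref{thm:bipartite-rough} with a careful Neumann-series expansion of the Perron eigenvalue. By Theorem~\ref{thm:bipartite-rough}, the spectral extremal $G$ is either (a) a complete bipartite graph, in which case $\lambda(G)=\sqrt{m}$ exactly, or (b) admits a partition $V(G)=A\sqcup C$ with $|A|<|F|$, $M:=G[A]$ a non-empty graph in $\mathcal{M}_F$, $C$ an independent set, and all but at most one vertex of $C$ complete to $A$. Setting $a:=|A|=v(M)$ and $c:=|C|$, an edge count yields $ac=m-e(M)+O(1)$, so that $c=m/a+O(1)$ while $a<|F|$ stays constant.

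In case~(b), let $(\mathbf{x},\mathbf{y})\in\mathbb{R}^A\times\mathbb{R}^C$ denote the Perron eigenvector of $G$. All of the $c-O(1)$ vertices of $C$ that are complete to $A$ are pairwise twins, so by Perron uniqueness their eigenvector entries coincide with a common value $y_0$, while the single asymmetric vertex contributes only a controlled perturbation. Normalizing $y_0=1$ and letting $B$ be the adjacency matrix of $M$, the eigenvalue equations reduce to $\lambda=S$ (where $S:=\sum_{i\in A}x_i$) and $(\lambda I-B)\mathbf{x}=c\,\mathbf{1}_A+\mathbf{r}$ with $\|\mathbf{r}\|_\infty=O(1)$. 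Inverting via the Neumann series $(\lambda I-B)^{-1}=\lambda^{-1}\sum_{k\ge 0}(B/\lambda)^k$ and summing over $A$ gives
\[
\lambda \;=\; c\,\mathbf{1}_A^{\top}(\lambda I-B)^{-1}\mathbf{1}_A + O(m^{-1/2})
\;=\; \frac{ca}{\lambda}+\frac{2c\,e(M)}{\lambda^2}+O\!\left(\frac{c}{\lambda^3}\right)+O(m^{-1/2}).
\]
Multiplying by $\lambda$, using $ca=m-e(M)+O(1)$, and inserting the ansatz $\lambda=\sqrt{m}+\delta$ gives the leading-order balance $2\sqrt{m}\,\delta = 2e(M)\sqrt{m}/a + O(1)$, whence
\[
\lambda(G)\;=\;\sqrt{m}+\frac{e(M)}{v(M)}+O(m^{-1/2}).
\]

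To pin down $M=G[A]$, extremality forces $G[A]$ to maximize $e(M)/v(M)$ among all $M\in\mathcal{M}_F$ realizable as $G[A]$ for some $F$-free $G$ of the form in (b). Conversely, for each $M\in\mathcal{M}_F$ with $v(M)<|F|$, the graph $M\vee I_c$ (trimmed by one partial vertex to hit exactly $m$ edges) is $F$-free: if $F\hookrightarrow M\vee I_c$, then the preimage $I$ of $I_c$ is an independent set of $F$; extending $I$ to a maximal independent set $I'$ and restricting the embedding to $V(F)\setminus I'$ exhibits $F[V(F)\setminus I']\in\mathcal{A}_F$ as a subgraph of $M$, contradicting $M\in\mathcal{M}_F$. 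A short combinatorial argument, splitting on whether $\mathcal{A}_F$ contains an edgeless member (which happens precisely when $F$ is bipartite) and otherwise bounding $e(M)/v(M)$ by avoidance of some graph in $\mathcal{A}_F$ with at least one edge, shows the supremum of $e(M)/v(M)$ over $\mathcal{M}_F$ is attained at some $M$ with $v(M)<|F|$. Comparing with case~(a), where $\lambda=\sqrt{m}$, settles both the asymptotic formula and the structural description.

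The technical heart of the argument is the Neumann-series analysis: the leading $\sqrt{m}$ is easy, but extracting the sharp linear correction $e(M)/v(M)$ requires controlling the $O(1)$-size perturbation from the single asymmetric vertex of $C$ and verifying that its contribution to $\lambda$ is only $O(m^{-1/2})$. This amounts to a quantitative stability statement for the Perron eigenvector on the two-block structure, which is the ingredient the authors describe in the abstract as the ``stability result for the Perron--Frobenius eigenvector.'' Any weaker control over the eigenvector would pollute the correction term and destroy the sharp constant in the expansion.
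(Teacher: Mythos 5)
Your proof is correct, and the core technical step is a genuine alternative to the paper's route. Both approaches reduce to Theorem~\ref{thm:bipartite-rough} and then estimate $\lambda(G)$ for the two-block structure, but the mechanisms differ. The paper (Lemma~\ref{lem-estimate}) gets the \emph{lower} bound by a Rayleigh quotient with the test vector that is constant on $A$ and on $C$, and the \emph{upper} bound by splitting $\lambda(G)=2\sum_{E(G[A,C])}x_ix_j+2\sum_{E(G[A])}x_ix_j$, bounding the first term by $\sqrt m$ (since $G[A,C]$ is bipartite with at most $m$ edges) and the second by $e(M)/v(M)+O(m^{-1/2})$ via the $\ell_2$ eigenvector-stability Lemma~\ref{lem:Perron-Frobenius-stability} applied to $H=K_{A,C}$. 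You instead exploit the block structure exactly: the vertices of $C$ that are complete to $A$ are false twins, so by Perron uniqueness they share the eigencoordinate $y_0$; normalizing $y_0=1$ turns the eigen-equation into $\lambda=\mathbf 1_A^\top\mathbf x$ together with the linear system $(\lambda I-B)\mathbf x=c\mathbf 1_A+\mathbf r$, $\|\mathbf r\|_\infty=O(1)$; the Neumann expansion of $(\lambda I-B)^{-1}$ then delivers both the matching upper and lower estimates in a single computation. Your route is more explicit and bypasses Lemma~\ref{lem:Perron-Frobenius-stability} entirely (for this theorem), at the cost of a slightly more delicate bookkeeping of the $O(1)$ perturbation from the one asymmetric vertex of $C$; the paper's route is shorter because the bipartite observation $\lambda(G[A,C])\le\sqrt m$ absorbs most of the error analysis. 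Your verification that each $M\in\mathcal M_F$ with $v(M)<|F|$ is realizable (via a trimmed $M\vee I_c$, and lifting an embedding through a maximal independent set to produce a forbidden member of $\mathcal A_F$ inside $M$) is exactly the paper's Lemma~\ref{lem:bipartite-free}, and your sketch of why the maximizer has $v(M)<|F|$ (splitting on whether $\mathcal A_F$ has an edgeless member) reconstructs the content of Lemma~\ref{fact:less-vertices}; spelling that step out fully would be advisable, but the idea you give is the right one.
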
 

The bound of Theorem \ref{thm:LLZ2025} does not hold for any bipartite graph $F$, unless $F$ is a star. 
The bound of Theorem \ref{thm:color-critical-I} does not hold for color-critical graphs $F$ with $\chi (F)=3$. 
To some extent, 
Theorem \ref{thm-determine-M} can be viewed as an intriguing supplement of Theorems \ref{thm:LLZ2025}  and \ref{thm:color-critical-I}.

We present an interesting example where determining $\max_{M \in \mathcal{M}_F} \frac{e(M)}{v(M)}$ reduces to a challenging graph problem. This shows that when we forbid a general bipartite graph $F$, the structure of the edge-spectral extremal graph in Theorem~\ref{thm-determine-M} can be quite non-trivial. 

\begin{example} \label{exam-chall}
Pick $s\ge 2$ and $n > 2s$. Let $F$ be the graph with vertex sets $V = I_1 \sqcup I_2 \sqcup I_3 \sqcup I_4$ where $I_1 = I_4 = [n + 1 - s]$ and $I_2 = I_3 = [s]$, and putting edges between each pair of vertices in $I_i \times I_{i + 1}$ for every $i \in \{1,2,3\}$. Then $\mathcal{A}_F=\{(n+1)K_1, K_{s,s}\}$. So $\mathcal{M}_F$ consists of all $K_{s, s}$-free graphs with at most $n$ vertices. Finding the maximizer $M \in \mathcal{M}_F$ reduces to determining the Tur\'{a}n number $\mathrm{ex}(n, K_{s, s})$, which is a well-known difficult open problem. 
\end{example}

Let $I$ be a maximum independent set of $F$ with  $\abs{\bar{I}} = k + 1$. If $F$ is an almost-bipartite graph, then $F[\bar{I}]$ contains at most one edge, then a non-empty graph $M$ lies in $\mathcal{M}_F$ if and only if $M$ has at most $k$ vertices. So $K_{k}$ is the unique subgraph $M\in \mathcal{M}_F$ that maximizes the ratio $e(M) / v(M)$ in Theorem \ref{thm-determine-M}. 
This observation will be applied in the next section.

\subsection{Applications}

\label{subsec:appli}

We present several applications of Theorem \ref{thm-determine-M} in this section. 
We solve the 
Brualdi--Hoffman--Tur\'{a}n problem for 
classical almost-bipartite graphs, including complete bipartite graphs plus an edge, cycles plus an edge, and theta graphs, etc. 
Our results significantly generalize a series of theorems in \cite{ZLS2021,LZS2024,FKW2024,LZZ2024,LLLY2025}. 
The previous method gives the bound that can be achieved only for some special values of $m$. 
In this paper, we determine the unique spectral extremal graph for every integer $m$.  
To start with, we define the edge-spectral extremal graph as below.

\begin{defn*}[The split graph]
For each $1\le k<m$, let $r,t$ be positive integers such that $m- {k \choose 2}= k t + r$, where $0\le r\le k-1$. 
Let $S_{k,m}$ be an $m$-edge graph 
obtained from $K_{k}\vee tK_1$ by adding an extra vertex that has exactly $r$ neighbors in the vertex set of $K_k$. 
\end{defn*} 

A computation shows 
$ \lambda (S_{k,m})
\le \frac{1}{2} (k-1 + \sqrt{4m -k^2+1})$, where the equality holds if and only if $r=0$, i.e., $m={k \choose 2} + ks$ for some integer $s\ge 0$. Moreover, one can check that $\lambda (S_{k,m}) \le \lambda (S_{\ell,m})$ provided by $1\le k\le \ell$ and sufficiently large $m$.

\subsubsection{Complete bipartite
graphs plus an edge}

In 2021, 
 Zhai, Lin and Shu \cite{ZLS2021} proved that if $t\ge 2$ and $m\ge 16t^2$, and $G$ is an $m$-edge $K_{2,t+1}$-free graph, then $\lambda (G)\le \sqrt{m}$, with equality if and only if $G$ is a star. In this paper, we will greatly extend this result to any complete bipartite graphs. 
Let $K_{k+1,t+1}^+$ be the graph obtained from  $K_{k+1,t+1}$ by adding an edge to the vertex part of size $k+1$.

\begin{theorem} \label{thm-unify}  
Let $2 \le k\le t$ be fixed and $m$ be sufficiently large. 
If $G$ is a $K_{k+1,t+1}^+$-free graph with $m$ edges, then 
$\lambda(G) \le \lambda (S_{k,m})$,
where the equality holds if and only if $G=S_{k,m}$. 
\end{theorem}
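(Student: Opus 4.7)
The plan is to derive Theorem~\ref{thm-unify} as a direct application of the structural characterization in Theorem~\ref{thm-determine-M} to the forbidden graph $F = K_{k+1,t+1}^+$. First I would verify the hypotheses: $F$ is almost-bipartite, since deleting the extra edge yields the bipartite graph $K_{k+1,t+1}$, and $F$ is certainly not a star given $k \ge 2$.

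The next step is to identify the family $\mathcal{A}_F$ by enumerating the maximal independent sets of $F$. Let the two sides of the underlying bipartition be $X$ (of size $k+1$) and $Y$ (of size $t+1$), and denote the extra edge by $uv \subseteq X$. Every maximal independent set is either $Y$, in which case $F[\overline{I}] = K_2 \cup (k-1)K_1$, or one of $X \setminus \{u\}$ and $X \setminus \{v\}$, each giving $F[\overline{I}] = K_{1,t+1}$. Hence $\mathcal{A}_F = \{K_2 \cup (k-1)K_1,\ K_{1,t+1}\}$. A graph $M$ lies in $\mathcal{M}_F$ precisely when it avoids both: avoiding $K_2 \cup (k-1)K_1$ forces $M$ to be edgeless or to have at most $k$ vertices, and avoiding $K_{1,t+1}$ is then automatic since $k \le t$. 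Over such $M$, the ratio $e(M)/v(M) \le (v(M)-1)/2 \le (k-1)/2$, with equality only when $M = K_k$. Thus the unique maximizer in $\mathcal{M}_F$ is $M^* = K_k$, of value $(k-1)/2$.

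Theorem~\ref{thm-determine-M} now asserts that any $m$-edge spectral extremal $F$-free graph $G$ admits a partition $V(G) = A \sqcup C$ with $G[A] \cong K_k$, $C$ independent, and all but at most one vertex of $C$ complete to $A$. Writing $m - \binom{k}{2} = kt + r$ with $0 \le r \le k-1$, edge-counting forces $|C| = t$ when $r = 0$, and $|C| = t+1$ with a single partial vertex of degree $r$ in $A$ when $r \ge 1$. Because $K_k$ is vertex-transitive, the choice of which $r$-subset of $A$ receives the partial edges is irrelevant up to isomorphism, and so $G \cong S_{k,m}$. The alternative case~(a) of Theorem~\ref{thm:bipartite-rough} (complete bipartite extremals) is ruled out because $\lambda(K_{a,b}) = \sqrt{m} < \sqrt{m} + (k-1)/2 + O(m^{-1/2}) = \lambda(S_{k,m})$ for sufficiently large $m$.

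Two routine verifications complete the argument: (i) $S_{k,m}$ is indeed $K_{k+1,t+1}^+$-free, because for $r \ge 1$ the only independent set of size $t+1$ is $C$ itself, whose common neighborhood in the clique $A$ has size $r \le k-1 < k+1$, while for $r = 0$ the largest independent set has size $t < t+1$; (ii) the explicit formula $\lambda(S_{k,m}) \le \tfrac{1}{2}\bigl(k-1 + \sqrt{4m - k^2 + 1}\bigr)$ is obtained by evaluating the $2 \times 2$ equitable quotient on the partition $(K_k, tK_1)$ in the case $r = 0$. The main obstacle is not conceptual --- Theorem~\ref{thm-determine-M} carries essentially all the weight --- but rather bookkeeping: one must confirm that the structural characterization pins down $G$ as $S_{k,m}$ for every residue of $m \pmod k$, and that strict inequality $\lambda(G) < \lambda(S_{k,m})$ holds for every other graph in the described structural class (most delicately, that the precise number of neighbors of the partial vertex is dictated by $m$ alone).
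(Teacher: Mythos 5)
Your proof is correct and follows essentially the same route as the paper's: verify that $\mathcal{A}_F = \{K_2 \cup (k-1)K_1,\, K_{1,t+1}\}$, deduce that $K_k$ is the unique maximizer of $e(M)/v(M)$ over $\mathcal{M}_F$, and invoke Theorem~\ref{thm-determine-M}. The paper states this in two sentences, while you also spell out the enumeration of maximal independent sets, rule out the complete-bipartite case explicitly, check that $S_{k,m}$ is $F$-free, and note that the structural characterization (with $|A|=k$, $C$ independent, at most one partial vertex, and vertex-transitivity of $K_k$) pins down $G\cong S_{k,m}$ for every residue of $m\bmod k$ --- all correct and worth making explicit, but not a different argument.
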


\begin{proof}
We see that  $K_{k+1,t+1}^+$ is color-critical with chromatic number $\chi (K_{k+1,t+1})=3$. 
        Setting $F=K_{k+1,t+1}^+$, 
    it is easy to verify that $\mathcal{A}_{F} = 
    \{K_2\cup (k-1)K_1, K_{1,t+1}\}$. 
    So the graph in $\mathcal{M}_{F}$ is either an empty graph of any order, or a non-empty graph with at most $k$ vertices. 
    By Theorem \ref{thm-determine-M}, taking $M=K_k$ yields the desired spectral extremal graph.  
\end{proof}

\noindent 
{\bf Remark.} 
The bound on $m$ is super-polynomial with respect to  $k$. Suppose on the contrary that Theorem \ref{thm-unify} holds for $m\ge Ck^p$ with some $C>0$ and $p\ge 2$. 
For any $\varepsilon \in (0,0.01)$, we take $\delta := \varepsilon^{10}C^{-1/p} >0$ and $k:=\delta m^{1/p}$. 
Then $m> Ck^p$ and $(1+\varepsilon) \sqrt{m} >  
\frac{1}{2}\big(k-1 + \sqrt{4m-k^2+1} \big) \ge \lambda (S_{k,m})$, which implies that any $m$-edge graph $G$ with $\lambda (G)\ge (1+\varepsilon )\sqrt{m}$ contains a copy of $K_{k+1,k+1}$ with $k\gg \log m$. This is impossible by a random construction; see \cite[Theorem 6.3]{LLZ2025-part-1}.

\subsubsection{Theta graphs}

Nikiforov \cite{Niki2009} studied 
the Brualdi--Hoffman--Tur\'{a}n problem for $C_4$ and 
proved that if $m\ge 10$ and $G$ is a $C_4$-free graph with $m$ edges, then $\lambda (G)\le \sqrt{m}$, with equality if and only if $G$ is a star. 
Zhai, Lin and Shu \cite{ZLS2021} studied the spectral problem for $C_5$-free and $C_6$-free graphs. 
Let $C_k^+$ be the graph obtained from $C_k$ by adding an edge between two vertices of distance two. In 2024, Li, Zhai and Shu \cite{LZS2024} introduced the spectral $k$-core method and proved that for $k\ge 3$ and $m\ge 4(k^2 +3k +1)^2$, if $G$ is a $C_{2k+1}^+$-free or $C_{2k+2}^+$-free graph with $m$ edges, then 
\begin{equation} 
\label{thm-LZS2024} 
\lambda (G)\le \frac{k-1 +\sqrt{4m -k^2+1}}{2}, 
\end{equation}
where the equality holds if and only if $G=K_k \vee (\frac{m}{k} - \frac{k-1}{2})K_{1}$. 
This confirms a famous conjecture for cycles  suggested by Zhai, Lin and Shu \cite[Conjecture 5.1]{ZLS2021}, which was also selected by Liu and Ning \cite[Sec. 5]{LN2023-unsolved} as one of the ``Unsolved Problems'' in spectral graph theory.

Let $\theta_{r,p,q}$ be the theta graph obtained from internally disjoint paths of lengths $r,p,q\in \mathbb{N}$ by sharing a common pair of endpoints.  
In 2025, Li, Zhao and Zou \cite{LZZ2024} extended the Li--Zhai--Shu result and showed further that the bound (\ref{thm-LZS2024}) still holds when $G$ is $\theta_{1,p,q}$-free, where $p+q\in \{2k+1,2k+2\}$ with some integer $k\ge 3$. 
Furthermore, Li, Zhao and Zou proposed a problem for the unsolved case  $\theta_{1,3,3}$ and a general problem for $\theta_{r,p,q}$ for every $r\ge 2$.

Observe that the bound (\ref{thm-LZS2024}) in the above results is attained only when $G=K_k \vee (\frac{m}{k} - \frac{k-1}{2})K_{1}$, that is, for integers $m$ such that $m- {k \choose 2}$ is divisible by $k$. 
However, for other values of $m$, these results do not characterize the spectral extremal graph. 
The previously known method seems complicated to provide a satisfactory answer even for small integers $k$; see, e.g., \cite{LL2025laa}.   
 
 In what follows, we give a sharp refinement on the results \cite{LZS2024,LZZ2024} and determine the unique spectral extremal graphs for every $k$ and $m$.  
Our result completely solves the problems in \cite{LZZ2024}.

\begin{theorem} \label{thm-theta-rpq}  
Let $1\le r\le p\le q$ be integers and $m$ be sufficiently large. Suppose that 
 $G$ is a $\theta_{r,p,q}$-free graph with $m$ edges and that one of the following holds: 
 \begin{itemize}
     \item[\rm (a)] 
    $\chi (\theta_{r,p,q})=3$ and 
   $r+p+q-1\in \{2k+1,2k+2\}$, where $k\ge 2$; 

  \item[\rm (b)] 
  $\chi (\theta_{r,p,q})=2$ and 
  $r+p+q-1\in \{2k+2,2k+3\}$, where $k\ge 1$. 
 \end{itemize} 
Then $\lambda(G) \le \lambda (S_{k,m})$,
where the equality holds if and only if $G=S_{k,m}$. 
\end{theorem}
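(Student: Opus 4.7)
The plan is to apply Theorem~\ref{thm-determine-M} with $F=\theta_{r,p,q}$ and verify that its structural conclusion pins $G$ down as $S_{k,m}$. First, $\theta_{r,p,q}$ has two hub vertices of degree $3$, so it is not a star; in case~(b) it is bipartite, and in case~(a), $\chi(\theta_{r,p,q})=3$ forces exactly one of $r,p,q$ to have the parity opposite to the other two, and removing any edge from this exceptional path breaks both odd cycles of $\theta_{r,p,q}$, leaving a bipartite subgraph. Hence $\theta_{r,p,q}$ is color-critical in case~(a), and almost-bipartite in both cases.

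Next I would compute $\alpha(\theta_{r,p,q})$ by casework on which of the two hubs $u,v$ lie in the independent set. Placing both hubs inside is never optimal (it is dominated by putting neither inside), and the remaining two configurations give
\[
\alpha(\theta_{r,p,q}) = \max\!\left\{\,\lfloor r/2\rfloor+\lfloor p/2\rfloor+\lfloor q/2\rfloor,\ \ 1+\sum_{\ell\in\{r,p,q\}}\lfloor(\ell-1)/2\rfloor\,\right\}.
\]
The parity patterns forced by each sub-case---(a-1) has exactly two of $r,p,q$ odd, (a-2) has exactly one odd, (b-1) has all odd, (b-2) has all even---then yield $v(\theta_{r,p,q})-\alpha(\theta_{r,p,q})=k+1$ uniformly, matching the $k$ in the theorem statement. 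By the observation immediately preceding this subsection, $K_k$ is therefore the unique maximizer of $e(M)/v(M)$ over $M\in\mathcal{M}_{\theta_{r,p,q}}$. Theorem~\ref{thm-determine-M} now delivers a partition $V(G)=A\sqcup C$ with $G[A]\cong K_k$, $C$ independent, and all but at most one vertex of $C$ complete to $A$; since $m-\binom{k}{2}=kt+r'$ with $0\le r'\le k-1$ pins down $t$ and $r'$ uniquely, we conclude $G\cong S_{k,m}$, giving $\lambda(G)\le\lambda(S_{k,m})$ with equality if and only if $G=S_{k,m}$.

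The main obstacle is the parity bookkeeping that matches $v(\theta_{r,p,q})-\alpha(\theta_{r,p,q})$ with the $k$ in the hypothesis: each of the four sub-cases corresponds to a distinct parity pattern on $(r,p,q)$, and mis-handling any would give the wrong $k$ or the wrong dominant term in the expression for $\alpha$. As a self-consistency check I would also verify directly that $S_{k,m}$ is $\theta_{r,p,q}$-free via a budget argument: three internally disjoint $u,v$-paths of lengths $r,p,q$ in $S_{k,m}$ consume at least $\sum_{\ell}\lfloor(\ell-1)/2\rfloor$, $\sum_{\ell}\lceil(\ell-1)/2\rceil$, or $\sum_{\ell}\lceil\ell/2\rceil$ internal vertices of $A$ according to whether both, one, or none of $u,v$ lie in $A$, and in each placement the same parity arithmetic shows this strictly exceeds the available budget $|A|-|\{u,v\}\cap A|$.
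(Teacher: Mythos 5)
Your proof is correct, and it arrives at the same essential fact as the paper's — namely that $v(\theta_{r,p,q}) - \alpha(\theta_{r,p,q}) = k+1$ — but routes it through Theorem~\ref{thm-determine-M} rather than Theorem~\ref{thm-unify}. The paper's proof is shorter: it asserts that $\theta_{r,p,q}$ is a subgraph of $K_{k+1,k+1}^+$ in case~(a) and of $K_{k+1,k+2}$ in case~(b), then invokes Theorem~\ref{thm-unify}. Verifying those embeddings is precisely the parity bookkeeping you carry out explicitly when computing $\alpha(\theta_{r,p,q})$: one needs a $2$-coloring of $\theta_{r,p,q}$ minus its critical edge (respectively of $\theta_{r,p,q}$) with both parts of size at most $k+1$ (respectively $k+1$ and $k+2$), and this is equivalent to the constraint $v - \alpha = k + 1$ that you derive from your casework. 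Your independence-number formula and the check that the four parity sub-cases all yield $v - \alpha = k + 1$ are correct, including the remark that the both-hubs configuration is dominated. Two small comments. First, the step identifying $K_k$ as the unique maximizer of $e(M)/v(M)$ leans on the informal observation stated before Section~\ref{subsec:appli}; as literally phrased there (``$F[\bar I]$ contains at most one edge'' for a maximum independent set $I$), it fails for some choices of maximum $I$, and what one actually needs is that \emph{some} maximum independent set $I$ has $F[\bar I]$ with at most one edge. For theta graphs this can be checked directly (take $I$ to be the part of the bipartition of $\theta_{r,p,q}$ minus the critical edge not containing that edge's endpoints, when its size achieves $\alpha$, or in the bipartite case either part); the paper's route through Theorem~\ref{thm-unify} sidesteps this by working with the explicitly known family $\mathcal{A}_{K_{k+1,t+1}^+}$. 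Second, the closing budget argument that $S_{k,m}$ is $\theta_{r,p,q}$-free is a sound sanity check but is actually forced by the argument: once Theorem~\ref{thm-determine-M} (or Theorem~\ref{thm:bipartite-rough}) pins the $\theta_{r,p,q}$-free extremal graph down to $G \cong S_{k,m}$ up to isolated vertices, $S_{k,m}$ must itself be $\theta_{r,p,q}$-free.
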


\begin{proof}
For every $r\le p\le q$, at least one cycle 
of $\{C_{r+p},C_{r+q},C_{p+q}\}$ has an even length. 
We can color such an even cycle with two colors, and then color the remaining path of $\theta_{r,p,q}$ using at most one new color. So $\theta_{r,p,q}$ is either bipartite or color-critical with $\chi (\theta_{r,p,q})=3$. 
In case (a), we see that $\theta_{r,p,q}$ is a subgraph of $K_{k+1,k+1}^+$. Using Theorem \ref{thm-unify}, we get the desired result. In case (b),  $\theta_{r,p,q}$ is a subgraph of $K_{k+1,k+2}$. 
Using Theorem \ref{thm-unify} again, we complete the proof. 
\end{proof}

The case where $r=p=2$ and $q=3$ in Theorem \ref{thm-theta-rpq} refines a recent result of Gao and Li \cite{GL2025-Mar}. Moreover, in the case $r=1$, we obtain the following generalization of the result \cite{LZZ2024}. 

\begin{cor} \label{coro-theta}  
Let $2\le p\le q$ be integers with 
$p+q\in \{2k+1,2k+2\}$, where $k\ge 2$. 
If $m$ is sufficiently large and  
 $G$ is a $\theta_{1,p,q}$-free graph with $m$ edges, then 
\[ \lambda(G) \le \lambda (S_{k,m}),\] 
where the equality holds if and only if $G=S_{k,m}$. 
\end{cor}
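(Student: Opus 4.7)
The plan is to deduce the corollary as a direct specialization of Theorem~\ref{thm-theta-rpq} with $r = 1$. Under this substitution, the quantity $r + p + q - 1$ appearing in the hypotheses of that theorem equals $p + q$, matching the parameters of the corollary. The task then reduces to verifying that every pair $(p, q)$ satisfying $p + q \in \{2k+1, 2k+2\}$ with $k \ge 2$ falls into case (a) or case (b) of the parent theorem with the same $k$.

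First I would recall (as already established in the proof of Theorem~\ref{thm-theta-rpq}) that $\theta_{1,p,q}$ is either bipartite or color-critical with $\chi = 3$, since among the three internal cycles $C_{p+1}, C_{q+1}, C_{p+q}$ at least one is even, and a $2$-coloring of this even cycle extends to a $3$-coloring of the whole theta graph using at most one additional color. I would then split on the parity of $p + q$: when $p + q = 2k+1$ is odd, the cycle $C_{p+q}$ is odd so $\chi(\theta_{1,p,q}) = 3$, placing us in case (a) with this $k \ge 2$; when $p + q = 2k+2$ with at least one of $p, q$ even, one of $C_{p+1}, C_{q+1}$ is odd so $\chi = 3$ and we are again in case (a); finally, when $p + q = 2k+2$ with both $p, q$ odd, every internal cycle has even length and $\chi = 2$, placing us in case (b) with the same $k \ge 2$, since $2k+2 \in \{2k+2, 2k+3\}$.

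In each of the three subcases Theorem~\ref{thm-theta-rpq} yields $\lambda(G) \le \lambda(S_{k,m})$ with equality if and only if $G = S_{k,m}$, which is exactly the conclusion of the corollary. Because this is a straightforward specialization, there is no genuine obstacle to carrying the argument out; all of the substantive work already resides in the parent Theorem~\ref{thm-theta-rpq} and, ultimately, in Theorem~\ref{thm-unify} and Theorem~\ref{thm-determine-M}.
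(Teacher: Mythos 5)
Your proposal is correct and takes exactly the route the paper intends: Corollary~\ref{coro-theta} is the $r=1$ specialization of Theorem~\ref{thm-theta-rpq}, and you correctly verify that every admissible $(p,q)$ lands in case (a) or case (b) of that theorem with the same $k$. One cosmetic remark: in your second subcase ``$p+q=2k+2$ with at least one of $p,q$ even'' the parity constraint forces both $p,q$ to be even (since $p+q$ is even), so in fact both $C_{p+1}$ and $C_{q+1}$ are odd; this does not affect the conclusion.
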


The case $p=2$ in Corollary \ref{coro-theta} confirms two conjectures proposed by Liu and Li \cite{LL2025laa} and recovers the main result of Liu, Li, Li and Yu \cite{LLLY2025} by a quite different method.  

\iffalse 
\begin{proof}
    Observe that if $p+q=2k+1$, or $p+q=2k+2$ with $p,q$ being even, then $\theta_{1,p,q}$ is a color-critical graph with 
$\chi (\theta_{1,p,q})=3$; 
if $p+q=2k+2$ and $p,q$ are odd, then 
$\chi (\theta_{1,p,q})=2$. 
As $\theta_{1,p,q} $ is a subgraph of $ K_{k+1,k+1}^+$, Theorem \ref{thm-unify} implies $\lambda (G)\le \lambda (S_{k,m})$, where the equality is achieved only for $G=S_{k,m}$. The extremal graph $S_{k,m}$ can be realizable, since $S_{k,m}$ is $\theta_{1,p,q}$-free. 
\end{proof}
\fi

\subsubsection{Classical bipartite graphs} 

\label{sec-other-bipartite}

 F\"{u}redi \cite{Fur1996cpc} proved that 
if $s\ge t\ge 1$ and $G$ is an $n$-vertex $K_{s,t}$-free graph, then   
\begin{equation}\label{Fur96}
e(G) \le \frac{1}{2}{ (s-t+1)}^{{1}/{t}}n^{2- {1}/{t}} 
+ \frac{1}{2} tn^{2- {2}/{t}} + \frac{1}{2}tn. 
\end{equation}  
 Furthermore, Nikiforov \cite{Niki2010laa}  established a spectral generalization of (\ref{Fur96}) by showing that if $s\ge t\ge 3$ and $G$ is an $n$-vertex $K_{s,t}$-free graph, then 
\begin{equation} \label{Nik-Kst}
 \lambda (G)\le 
 (s-t+1)^{{1}/{t}} n^{1-{1}/{t}}+ 
 (t-2) n^{1-{2}/{t}}+(t-2).
\end{equation}
For bipartite graphs, determining the exact estimates for both the Tur\'{a}n number and its spectral counterpart has proven to be a significant and challenging problem; see \cite{AKS2003,FS13,CDT2023,CDT2024}.  

 For color-critical graphs $F$, the error term $o(1)$ in Theorems \ref{thm-ESS}, \ref{thm-spec-ESS} and \ref{thm:LLZ2025} can be removed. For bipartite graphs $F$, Theorems \ref{thm-ESS} and \ref{thm-spec-ESS} were improved to $O(n^{2-1/t})$ and $O(n^{1-1/t})$ for some $t\ge 1$, respectively, as seen in (\ref{Fur96}) and (\ref{Nik-Kst}). 
 In contrast, interestingly we see that when forbidding bipartite graphs $F$, Theorem \ref{thm:LLZ2025} still maintains the order $m^{1/2}$ unless $F$ is a star.

In this section, we investigate the edge-spectral Tur\'{a}n problem for various bipartite graphs, including complete bipartite graphs, even cycles, matchings, paths, $1$-subdivisions of complete graphs and complete bipartite graphs, $d$-dimensional hypercubes, grids, $2\ell$-prisms and $2\ell$-cycles with all diagonals. 
We point out that the edge-spectral Tur\'{a}n problem has quite different behavior from the classical Tur\'{a}n problems for bipartite graphs. 
Indeed, as seen in (\ref{Fur96}) and (\ref{Nik-Kst}), the bounds are achieved by the projective norm graphs, which are pseudo-random graphs. In contrast, the extremal graph of the edge-spectral Tur\'{a}n problem is nearly a split graph $S_{k,m}$.

To state our result, we need to introduce some  notation. Let $\alpha (F)$ be the independence number of $F$. The following concept is frequently used in the study of graph Ramsey theory.

 \begin{defn*}
Let $F$ be a bipartite graph. 
The color surplus, denoted by $\sigma (F)$, is defined as the minimum size of a color class of $F$ over all possible proper $2$-colorings of the vertices of $F$. 
 \end{defn*}

 We obtain the following general theorem for bipartite graphs.

 \begin{theorem}  \label{thm-bipart}
     Suppose that $F$ is a bipartite graph with color surplus $ \sigma(F) = |F|- \alpha (F)$ and $F$ is not a star. 
     If $m$ is sufficiently large and $G$ is an $F$-free graph with  $m$ edges, then 
     \[ \lambda (G)\le \lambda (S_{\sigma(F) -1,m}), \] 
     with equality holds if and only if $G=S_{\sigma (F) -1,m}$. 
 \end{theorem}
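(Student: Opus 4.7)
The plan is to apply Theorem~\ref{thm-determine-M} directly, using the hypothesis $\sigma(F)=|F|-\alpha(F)$ to pin down the family $\mathcal{M}_F$ and its ratio-maximiser, and then to verify that $S_{\sigma(F)-1,m}$ is itself $F$-free so that the forced structure really can be realised.

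First I would identify $\mathcal{M}_F$. Fix a proper $2$-colouring of $F$ with smaller class $X$ of size $\sigma(F)$ and $Y=V(F)\setminus X$ of size $\alpha(F)$. The hypothesis $\sigma(F)=|F|-\alpha(F)$ makes $Y$ a maximum independent set, so $\overline{Y}=X$; since $F$ is bipartite, $F[\overline{Y}]=F[X]$ is edgeless, hence $\sigma(F)\,K_{1}\in\mathcal{A}_F$. This forces every $M\in\mathcal{M}_F$ to have at most $\sigma(F)-1$ vertices. Conversely, for any maximal independent set $I$ we have $|\overline{I}|\ge|F|-\alpha(F)=\sigma(F)$, so no element of $\mathcal{A}_F$ embeds into $K_{\sigma(F)-1}$, whence $K_{\sigma(F)-1}\in\mathcal{M}_F$. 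On $\mathcal{M}_F$ the ratio $e(M)/v(M)\le(v(M)-1)/2\le(\sigma(F)-2)/2$, with equality if and only if $M\cong K_{\sigma(F)-1}$, so the clique $K_{\sigma(F)-1}$ is the unique maximiser.

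Second, I would feed this into Theorem~\ref{thm-determine-M}. Since $F$ is bipartite (hence almost-bipartite) and not a star, any $m$-edge spectral extremal $F$-free graph $G$ admits a partition $V(G)=A\sqcup C$ with $G[A]\cong K_{\sigma(F)-1}$, $C$ independent, and all but at most one vertex of $C$ complete to $A$. Writing $k=\sigma(F)-1$ and using $e(G)=m$, this structure matches the definition of $S_{k,m}$ up to relabelling within $A$, so $G\cong S_{\sigma(F)-1,m}$. To see that the structure is actually attained, I would check that $S_{\sigma(F)-1,m}$ is $F$-free: by K\"{o}nig's theorem $\nu(F)=|F|-\alpha(F)=\sigma(F)$, so $F$ contains a matching of $\sigma(F)$ edges, and any embedding $F\hookrightarrow S_{k,m}$ would send these edges into edges of $S_{k,m}$; as $C$ is independent, each image edge has at least one endpoint in $A$, and distinct matching edges yield distinct endpoints in $A$, forcing $|A|\ge\sigma(F)=k+1$, contradicting $|A|=k$.

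The main substantive point, and the step I expect to be most delicate, is the determination of $\mathcal{M}_F$ in the first paragraph: the hypothesis $\sigma(F)=|F|-\alpha(F)$ is precisely what forces $\sigma(F)\,K_{1}\in\mathcal{A}_F$ and thus restricts $\mathcal{M}_F$ to graphs on at most $\sigma(F)-1$ vertices, collapsing the maximisation to a clique. Without this hypothesis (compare Example~\ref{exam-chall}), $\mathcal{A}_F$ need not contain any edgeless member and the ratio-maximiser can be considerably richer; everything else then follows straightforwardly from Theorem~\ref{thm-determine-M} and the K\"{o}nig matching argument.
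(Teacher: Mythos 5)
Your proof is correct, and it arrives at the result via the same engine as the paper, namely Theorem~\ref{thm-determine-M}, but with two organizational differences worth noting. First, the paper's proof observes $F \subseteq K_{\sigma, \alpha} \subseteq K_{\sigma,\alpha}^+$ and then black-boxes Theorem~\ref{thm-unify} (which itself applies Theorem~\ref{thm-determine-M} to the enveloping graph $K_{k+1,t+1}^+$); you instead compute $\mathcal{A}_F$ and $\mathcal{M}_F$ for the original bipartite $F$, showing that the hypothesis $\sigma(F) = |F| - \alpha(F)$ forces $\sigma(F)\,K_1 \in \mathcal{A}_F$, hence that every member of $\mathcal{M}_F$ has at most $\sigma(F)-1$ vertices with $K_{\sigma(F)-1}$ the unique ratio-maximiser. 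This direct route is arguably cleaner, and it also sidesteps the hypothesis $k \ge 2$ (equivalently $\sigma(F) \ge 3$) in Theorem~\ref{thm-unify}, a boundary case the paper's short proof does not discuss. Second, for the realisability check you invoke K\"{o}nig's theorem to extract a $\sigma(F)$-edge matching in $F$ and argue that each image edge must contribute a distinct vertex to $A \cong K_{\sigma(F)-1}$, whereas the paper more directly notes that any embedding of $F$ into $S_{\sigma(F)-1,m}$ places at least $|F| - (\sigma(F)-1) = \alpha(F)+1$ vertices of $F$ into the independent part, contradicting $\alpha(F)$. Both are correct; the paper's independent-set count is marginally more economical, but the K\"{o}nig argument is a perfectly sound alternative.
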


 \begin{proof}
 We denote $\sigma(F)=\sigma$ and  $\alpha(F)=\alpha$. By the definition, we see that $F$ is a subgraph of $K_{\sigma, \alpha}$. Theorem \ref{thm-unify} implies $\lambda (G)\le \lambda (S_{\sigma -1, m})$. In addition, we 
 need to show that this bound can be achieved, that is, showing that $S_{\sigma -1,m}$ is $F$-free. Indeed, if $S_{\sigma-1,m}$ contains a copy of $F$, then the vertices of $F$ can be partitioned into two parts where one part is embedded to $K_{\sigma -1}$ and the other is embedded to the independent set of $S_{\sigma -1,m}$. Then $F$ has an independent set of size at least $|F| - (\sigma -1)$, which leads to a contradiction since $\sigma =|F| - \alpha$. 
 \end{proof}

\noindent 
{\bf Remark.}~In Example \ref{exam-chall}, we have  $\sigma (F)=n+1$ and $\alpha (F)=2(n+1-s)$, so $\sigma (F)\neq |F| -\alpha (F)$. Note that $S_{\sigma ,m}$ is not the edge-spectral extremal graph for $F$. 
In conclusion, Example \ref{exam-chall} shows that the condition $\sigma (F)=|F| -\alpha (F)$ can not be removed in Theorem \ref{thm-bipart}.

\medskip
As applications of Theorem \ref{thm-bipart}, we determine the edge-spectral extremal graph for some classical bipartite graphs. To begin with, we fix the following standard definition. 

\begin{enumerate}
 
\item 
Let $M_{2k+2}$ be the matching with $2k+2$ vertices, and $P_k$ be the path on $k$ vertices. 

\item 
For a graph $F$, let $\mathrm{sub}(F)$ 
be the $1$-subdivision of $F$, which is obtained from $F$ by adding a new vertex to each edge of $F$. For instance, $\mathrm{sub}(K_3)=C_6$ and $\mathrm{sub}(K_{2,2}) = C_8$. 

\item 
Let $\Theta_{t, \ell}$ be the graph that consists of 
$t$ internally disjoint paths of length 
$\ell$ with the same endpoints. 
In particular, if $t=2$, then 
$\Theta_{2,\ell}= C_{2\ell}$; if $\ell =2$, then $\Theta_{t,2} =K_{2,t}$. 

\item 
The $d$-dimensional hypercube $Q_d$ is the graph whose vertex set is $\{0,1\}^d$ and in which two vertices are joined by an edge if they differ in exactly one coordinate. 

\item 
The grid $G_t$ is the graph with vertex set $[t]\times [t]$ in which two vertices are joined by an edge if they differ in exactly one coordinate and in that coordinate they differ by one. 

\item 
The $2\ell$-prism $C_{2\ell}^{\square}$ is the graph consisting of two vertex-disjoint $2\ell$-cycles and a matching joining the corresponding vertices on these two cycles. 
We see that $C_4^{\square}$ is $3$-cube $Q_3$. 

\item 
Let $C_{2\ell}^{\mathrm{dia}}$ be the graph obtained from a $2\ell$-cycle by adding all diagonals, i.e., adding all chords joining vertices at maximum distance on the $2\ell$-cycle. 
For $\ell =3$, we see that $C_{6}^{\mathrm{dia}}$ is isomorphic to  $K_{3,3}$. 
Observe that $C_{2\ell}^{\mathrm{dia}}$ is bipartite if and only if $\ell$ is odd. 
\end{enumerate}

The classical Tur\'{a}n numbers of these bipartite graphs have recently been well-studied in the literature. 
Now, we summarize the solutions of the edge-spectral Tur\'{a}n problems in Table \ref{tab-SSSR}. 

\begin{table}[H]
\centering
\begin{tabular}{ccccccccccc}
\toprule
Forbidden substructures  & & Edge-spectral extremal graphs  \\
\midrule
$K_{k+1,t+1}$ with $1\le k\le t$  &  &  $S_{k,m}$ \\ 
$C_{2k+2}$ with $k\ge 1$  &  & $S_{k,m}$ \\  
$M_{2k+2}$ with $k\ge 2$  &  &  $S_{k,m}$ \\ 
$P_{2k+2},P_{2k+3}$ with $k\ge 2$  &  & $S_{k,m}$ \\ 
 $\mathrm{sub}(K_{k+1})$ with $k\ge 2$  & & $S_{k,m}$  \\
  $\mathrm{sub}(K_{s+1,t+1})$ with $s,t\ge 1$  & & $S_{s+t+1,m}$  \\
  $\Theta_{t,\ell}$ with $t\ge 2$ and odd $\ell \ge 3$ & & 
  $S_{\frac{1}{2}t(\ell -1),m}$  \\ 
   $\Theta_{t,\ell}$ with $t\ge 2$ and even $\ell \ge 2$ & & 
  $S_{\frac{1}{2}t(\ell -2)+1,m}$ \\ 
   $Q_d$ with $d\ge 2$  & & $S_{2^{d-1}-1,m}$ \\  
      $G_t$ with $t\ge 2$  & & $S_{\lfloor \frac{1}{2}t^2\rfloor-1,m}$ \\
      $C_{2\ell}^{\square}$ with $\ell\ge 2$  & & $S_{2\ell -1,m}$ \\ 
 $C_{2\ell}^{\mathrm{dia}}$ with odd $\ell\ge 3$  & & $S_{\ell -1,m}$ \\ 
\bottomrule 
\end{tabular}
\caption{Edge-spectral extremal graphs for some classical graphs}
 \label{tab-SSSR}
\end{table}

Setting $\ell =3$, 
we see that the theta graph $\theta_{1,3,3}$ is a subgraph of $C_{6}^{\mathrm{dia}}$, and $S_{2,m}$ is $\theta_{1,3,3}$-free. This also gives a solution to the unsolved case of a result of Li, Zhao and Zou \cite{LZZ2024}.

\subsubsection{Planar graphs and $k$-planar graphs}

A graph is called {\it planar} if it can be drawn in the plane without crossing edges. Bounding the spectral radius of planar graphs has a long history; see \cite{Hong1995,EZ2000} and references therein. 
A breakthrough of Tait and Tobin \cite{TT2017} states  that if $n$ is sufficiently large, then $K_2\vee P_{n-2}$ is the unique graph with maximum spectral radius over all $n$-vertex planar graphs. This makes significant progress towards the Boots--Royle--Cao--Vince Conjecture.

In this paper, we investigate the edge-spectral Tur\'{a}n problem for planar graphs. 
The edge-spectral extremal problem has a different phenomenon from the vertex-spectral problem. 
It is well-known that if $G$ is planar, 
then $G$ is $K_{3,3}$-free. Consequently, Theorem \ref{thm-bipart} implies $\lambda (G)\le \lambda (S_{2,m})$. This recovers the main result of Fan, Kang and Wu \cite{FKW2024} in a simple way.

A graph $G$ is called {\it $k$-planar} if it has a drawing in the plane $\mathbb{R}^2$ such that each edge of $G$ is crossed with at most $k$ other edges. 
 The vertex-spectral Tur\'{a}n problem for $1$-planar graphs was recently investigated by Zhang, Wang and Wang \cite{ZWW2024}. 
There is no related spectral extremal result for $k$-planar graphs with $k\ge 2$. 
In this paper, we provide the first such result for $k$-planar graphs. As a warm-up, 
we observe that every $1$-planar graph is $K_{3,7}$-free, since $K_{3,7}$ is not $1$-planar.  
We now extend this observation to $k$-planar graphs.

\begin{theorem} \label{thm-k-planar}
    For any fixed $k\ge 1$ and sufficiently large $m$, if $G$ is a $k$-planar graph with $m$ edges, then 
$ \lambda (G)\le \lambda (S_{2,m})$,  
   where the equality holds if and only if $G=S_{2,m}$.  
\end{theorem}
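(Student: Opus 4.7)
The plan is to reduce Theorem~\ref{thm-k-planar} to an application of Theorem~\ref{thm-unify} with the ``$k$''-parameter of that theorem set to $2$. For this I need two ingredients: (i) every $k$-planar graph is $K_{3, N+1}^+$-free for some $N = N(k)$ depending only on the planarity parameter $k$; and (ii) the candidate extremal graph $S_{2, m}$ lies in the $k$-planar family, so that the equality case is realizable.

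For (i), I claim that every $k$-planar graph is $K_{3, N}$-free with $N := 6k + 4$. Consider any $k$-planar drawing of a copy of $K_{3, t}$ in the plane. Since each of its $3t$ edges carries at most $k$ crossings and every crossing is shared by exactly two edges, the total number of crossings in the drawing is at most $3kt/2$. On the other hand, the classical crossing-number formula of Kleitman gives
\[ \mathrm{cr}(K_{3, t}) = \left\lfloor \tfrac{t}{2} \right\rfloor \left\lfloor \tfrac{t-1}{2} \right\rfloor. \]
Substituting $t = 6k + 4$ produces $\mathrm{cr}(K_{3, t}) = (3k + 2)(3k + 1) = 9k^2 + 9k + 2$, which strictly exceeds the available budget $3kt/2 = 9k^2 + 6k$. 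Hence $K_{3, 6k + 4}$ admits no $k$-planar drawing, establishing the claim. Since $K_{3, N+1}^+ \supseteq K_{3, N+1} \supseteq K_{3, N}$, every $k$-planar graph is then also $K_{3, N+1}^+$-free.

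With this forbidden subgraph condition in hand, Theorem~\ref{thm-unify} applied to the $K_{3, N+1}^+$-free graph $G$ yields, for $m$ sufficiently large in terms of $N = N(k)$ (hence in terms of the planarity parameter $k$), the inequality $\lambda(G) \le \lambda(S_{2, m})$, with equality if and only if $G = S_{2, m}$. For (ii), note that $S_{2, m}$ is obtained from the planar graph $K_2 \vee t K_1$ (the book graph $B_t$) by adjoining a single extra vertex of degree at most one, i.e., $r \in \{0, 1\}$ neighbors in the $K_2$; this operation plainly preserves planarity. Hence $S_{2, m}$ is planar, in particular $k$-planar for every $k \ge 1$, so the equality case is actually attained inside the class of $k$-planar graphs.

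The only non-routine step I anticipate is the crossing-number lemma in (i), and even there the argument is essentially a one-line double-counting against the known value of $\mathrm{cr}(K_{3,t})$; once that is in place, the conclusion drops out of Theorem~\ref{thm-unify} together with the elementary planarity verification of $S_{2, m}$. A minor caveat is simply ensuring that the ``$m$ sufficiently large'' threshold coming from Theorem~\ref{thm-unify} is consistent with the $k$-dependence in $N = 6k+4$, but this is automatic since in Theorem~\ref{thm-k-planar} the planarity parameter $k$ is fixed.
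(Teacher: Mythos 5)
Your proposal is correct and follows essentially the same route as the paper: both rely on Kleitman's formula $\mathrm{cr}(K_{3,t}) = \lfloor t/2\rfloor\lfloor (t-1)/2\rfloor$ to conclude that $k$-planar graphs exclude $K_{3,N}$ for some $N = N(k)$, and then invoke the complete-bipartite-plus-an-edge machinery (you apply Theorem~\ref{thm-unify} directly; the paper routes through Theorem~\ref{thm-bipart}, itself a one-line corollary of Theorem~\ref{thm-unify}). Your extra steps --- computing the explicit threshold $N = 6k+4$ by double-counting crossings and verifying that $S_{2,m}$ is planar so the equality case is realizable inside the $k$-planar class --- are small but welcome tightenings of details the paper leaves implicit.
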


\begin{proof} 
Recall that the crossing number $\mathrm{cr}(G)$ of a graph $G$ is the minimum number of the total crossings of edges in a plane drawing of $G$. A well-known result of 
 Kleitman \cite{Kle1970} states that $\mathrm{cr}(K_{3,t})=\lfloor \frac{t}{2} \rfloor \cdot \lfloor \frac{t-1}{2}\rfloor$. Consequently, for any $k\ge 1$, there exists an integer $t_0=t_0(k)$ such that $K_{3,t}$ is not $k$-planar for every $t\ge t_0$. Then any $k$-planar graph $G$ is $K_{3,t}$-free for every $t\ge t_0$. Using Theorem \ref{thm-bipart}, we get $\lambda (G)\le \lambda (S_{2,m})$, and the equality holds if and only if $G=S_{2,m}$.  
 \end{proof}

\paragraph{Our approach.} 
Our proof of Theorem \ref{thm:color-critical-I} relies on 
the edge-spectral stability method (Theorem \ref{thm:nikiforov-stability}), which we expect would become a standard tool for solving the Brualdi--Hoffman--Tur\'{a}n problem.  
Suppose that $G$ is an $m$-edge $F$-free graph with maximal spectral radius. Firstly, we show that $G$ is (in edit-distance) close to a complete partite graph $H=K_{U_1,\ldots ,U_r}$; see Claims \ref{claim-bound-m} and \ref{claim-subgraph-H}. We summarize the remaining arguments to Theorem \ref{thm:color-critical-when-stable}. 
We show that any possible imperfection on the partition of $H$ leads to a copy of the forbidden substructure $F$. For example, every vertex of $G$ can not have large neighbors in all partite sets of $H$; see Claim \ref{claim:4-3}. The second step is to find a large $r$-partite subgraph $G'$ of $G$ with $m'$ edges. To do this, we construct $r$ pairwise disjoint independent sets in $G$, say $V_1,\ldots ,V_r$; see Claim \ref{claim:4-4}. 
Let $S$ be the set of vertices outside $V_1,\ldots ,V_r$. 
Thirdly, we bound the coordinates of the Perron--Frobenius vector of $G$, and show that the vertices of $S$ contribute a small amount to the spectral radius of $G$. More precisely, we denote $A:=(1-\frac{1}{r})2m',B:=(1- \frac{1}{r})2(m- m')$ and $X:=\sum_{v\in V(G')}x_v^2$. Then we prove that $\sum_{uv\in E(G')} 2x_ux_v \le \sqrt{A} \cdot X$ and $\sum_{v\in S} x_v \sum_{w\in N(v)}\beta_w x_w \le \sqrt{(1-X)\cdot 2BX}$, where $\beta_w :=1$ if $w\in S$; and $\beta_w :=2$ if $w\notin S$. Consequently, we get $\lambda (G)\le \sqrt{A} X + \sqrt{B \cdot 2X(1-X)}\le \sqrt{A+B}$; see Claim \ref{claim-bound-square} -- Claim \ref{upper-bound-2}.   
This can be regarded as a crucial step in our proof. In this step, we need to establish some new spectral techniques, including a stability result for the Perron--Frobenius vectors; see Lemma \ref{lem:Perron-Frobenius-stability}. Generally speaking, the Perron--Frobenius vector of $G$ is quite close to that of $T_{n,r}$ in the $\ell_2$-norm whenever $G$ has edit distance $o(m)$ with $T_{n,r}$. 

For Theorem \ref{thm:bipartite-rough}, suppose that $F$ is an almost-bipartite graph and $G$ is an $m$-edge $F$-free graph with maximum spectral radius. First of all, we see that $\mathcal{M}_F$ is either empty or a non-empty graph with fewer than $|F|$ vertices. Moreover, for a graph $H$ and an integer $b> |F|$, the graph $H\vee bK_1$ is $F$-free if and only if $H$ is $\mathcal{A}_F$-free. 
Applying the edge-spectral stability in Theorem \ref{thm:nikiforov-stability}, we show that $G$ contains two disjoint vertex sets $A,B$ such that $d(G,K_{A,B})=o(m)$ and $G[A] \in \mathcal{M}_F$. Moreover, we show that $|A|=O(1)$ and $|B|=\Omega (m)$; see Claim \ref{cl:bound-A-B}.  
Using the stability for eigenvectors, we obtain  $x_v=\Omega (1)$ for every vertex $v\in A$; see Claim \ref{cl:weight-a}.  
We point out that $\{A,B\}$ is not necessarily a partition of vertices of $G$. For convenience, we denote $C:=V(G)\setminus A$ and show that $C$ is an independent set of $G$. Otherwise, we remove those edges that are not incident to vertices of $A$, and add the same number of pendent edges to a fixed vertex of $A$. This increases the spectral radius of $G$; see Claim \ref{lem:small-part-2}. Next, we show that $G[A]$ is non-empty and $|A| < |F|$; see Claim \ref{cl:A-less-F}.
Finally, we prove that all but at most one vertex of $C$ are complete to $A$. Otherwise, if $u,v\in C$ are not complete to $A$, where $d_u \ge d_v$, then moving an edge from $N(v)$ to $N(u)$ increases the spectral radius of $G$; see Claim \ref{cl:at-most-one}. 

We show that 
Theorem \ref{thm-determine-M} is a direct consequence of Theorem \ref{thm:bipartite-rough}. Indeed, let $A$ and $C$ be the vertex sets defined in Theorem \ref{thm:bipartite-rough}. 
With aid of the structural characterization in Theorem \ref{thm:bipartite-rough}, 
we see that $G$ differs from $K_{A,C}$ in $
{|A| \choose 2} + |A| =O(1)$ edges, which significantly improves the previous $o(m)$ bound with respect to $K_{A,B}$.  Using Lemma \ref{lem:Perron-Frobenius-stability}, we can show that $x_v={1}/{\sqrt{2|A|}} + O(m^{-1/4})$ for every $v\in A$. So we can finish the proof immediately.

\paragraph{Organization.} 
In Section \ref{sec:prelim}, we provide some preliminaries that will be used in our proofs, including the edge-spectral stability theorem and the stability result for Perron--Frobenius eigenvectors in the $\ell_2$-norm.
In Section \ref{sec:proof-thm2-1}, 
we give the proof of Theorem \ref{thm:color-critical-I}. 
In Section \ref{sec:almost}, we present the proofs of Theorems \ref{thm:bipartite-rough} and \ref{thm-determine-M}, respectively.

\paragraph{Notation.} 
We write $G=(V,E)$ for a simple and undirected graph with the vertex set $V=\{v_1,\ldots ,v_n\}$ and edge set $E=\{e_1,\ldots ,e_m\}$, where we admit $n=|V|$ and $m=|E|$. Sometimes, we write $|G|$ for the number of vertices of $G$. We write $ij$ for an edge if $i$ and $j$ are adjacent.  The degree of a vertex $v\in V$ is denoted by 
$d_v$, and the set of neighbors of $v$ is denoted by $N(v)$. For a subset $U \subseteq V(G)$, the set of neighbors of $v$ in $U$ is denoted by $N_U(v)$. 
The maximum degree of $G$ is denoted by 
$\Delta (G)$. 
Let $G[t]$ be the $t$-blow-up of $G$, which is a graph obtained from $G$ by replacing each vertex by an independent set of order $t$, and each edge by a copy of the complete bipartite graph $K_{t,t}$. 
We write $K_{V_1,\ldots ,V_r}$ for the complete $r$-partite graph on the vertex parts $V_1,\ldots ,V_r$. 
For disjoint sets $U,W\subseteq V$, 
we write $e(U)$ for the number of edges within $U$, and $e(U,W)$ for the number of edges between $U$ and $W$. 
For disjoint sets $U_1,\ldots ,U_r$, we write $G[U_1,\ldots ,U_r]$ for the $r$-partite subgraph of $G$ on the vertex sets $U_1,\ldots ,U_r$. 
The edit-distance between two graphs $G$ and $H$ is denoted by $d(G,H)$, which is the minimum number of edges we need to change (delete or add) to transfer $G$ into $H$.  Let $G \vee H$ be the join graph consisting of $G$ and $H$ in which each vertex of $G$
is adjacent to each vertex of $H$.

Recall that the adjacency matrix $A_G$ 
of a graph $G$ is defined as a symmetric matrix of order $|V|\times |V|$ with $a_{i,j}=a_{j,i}=1$ if and only if $ij\in E$, and $a_{i,j}=a_{j,i}=0$ otherwise. 
All eigenvalues of $A_G$ are real and can be rearranged as $\lambda_1\ge \lambda_2\ge \cdots \ge \lambda_n$.  
The spectral radius $\lambda (G)$ is defined to be the maximum absolute value of all eigenvalues of $A_G$. By the Perron--Frobenius theorem, 
the spectral radius of a nonnegative matrix is actually a largest eigenvalue. Moreover, there exists a unit nonnegative eigenvector $\bm{x}\in \mathbb{R}^n$  corresponding to $\lambda (G)$. 
Such a vector is called the Perron--Frobenius eigenvector. For each vertex $v\in V$, 
we write $x_v$ for the coordinate of $\bm{x}$ corresponding to $v$. 
In the language of graphs, 
the eigen-equation $A_G\bm{x}= \lambda (G) \bm{x}$ can be read as $\lambda(G) x_v = \sum_{u\in N(v)} x_u$, and the Rayleigh quotient implies 
$\lambda (G) = 2 \sum_{uv\in E} x_ux_v$. 
Here and in the rest of the paper, we denote by $\sum_{uv \in E}$ the sum over each edge in $E$ \textbf{once}. When $G$ is a bipartite graph with given parts $A$ and $B$, we additionally assume that $u \in A$ and $v \in B$.

\section{Preliminary results}

\label{sec:prelim}

In this section, we collect a number of results that we will use throughout our proofs. 
Firstly, we need an edge-spectral stability result to obtain an approximate structure of the extremal graphs. 
For two disjoint vertex sets $A,B$, 
we write $K_{A,B}$ for the complete bipartite graph on the parts $A$ and $B$. 
For a vertex set $C$, we write $T_{C,r}$ 
for an $r$-partite Tur\'{a}n graph on $C$. 
The following stability result of Theorem \ref{thm:LLZ2025} was recently established by the authors \cite{LLZ2025-part-1}.

\begin{theorem}[Edge-spectral stability theorem \cite{LLZ2025-part-1}]
\label{thm:nikiforov-stability}
Let $F$ be a graph with $\chi (F)=r+1\ge 3$.  
For every $\varepsilon > 0 $, 
there exist $\delta>0$ and $m_0$ depending only on $F$ and $\varepsilon$ such that the following holds. If $G$ is an $F$-free graph with $m\ge m_0$ edges and $\lambda^2 (G)\ge (1- \frac{1}{r} - \delta) 2m$, then
\begin{itemize}
\item[\rm (a)] If $r = 2$, then there exist disjoint vertex sets $A, B \subseteq V(G)$ such that $d(G, K_{A, B}) \leq \varepsilon m$.

\item[\rm (b)]
 If $r \geq 3$, then there exist a vertex set $C \subseteq V(G)$ and an $r$-partite Tur\'{a}n graph $T_{C, r}$ on $C$ such that $d(G, T_{C, r}) \leq \varepsilon m$. 
 \end{itemize}
\end{theorem}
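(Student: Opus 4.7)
The plan is to reduce this edge-spectral stability statement to the classical (vertex-based) Erd\H{o}s--Simonovits stability theorem via a \emph{spectral densification} argument. Although $G$ may be globally sparse, the spectral hypothesis forces the edges responsible for $\lambda(G)$ to concentrate on a subset of roughly $\Theta(\sqrt{m})$ vertices; applying classical stability inside this subset then yields a Tur\'{a}n-like partition, which is extended to the whole graph by a perturbative argument.

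First, let $\mathbf{x}$ be the unit Perron--Frobenius eigenvector of $G$ and write $\lambda:=\lambda(G)$, so that $\lambda^2\geq (1-1/r-\delta)\cdot 2m$. Fix a small auxiliary parameter $\eta=\eta(\varepsilon)$ to be tuned, and define the heavy set
\[
W:=\bigl\{v\in V(G):\ x_v\geq \eta\cdot \lambda^{-1/2}\bigr\}.
\]
A direct $\ell_2$-count using $\|\mathbf{x}\|_2=1$ yields $|W|\leq \lambda/\eta^2=O(\sqrt{m})$. The key quantitative step is to show that edges with at least one endpoint outside $W$ contribute only $O(\eta)\,\lambda^2$ to the Rayleigh quotient: expanding
\[
\lambda^2=\|A\mathbf{x}\|_2^2=\sum_{v}\Bigl(\sum_{u\in N(v)}x_u\Bigr)^2
\]
and truncating the inner sum at the threshold $\eta\lambda^{-1/2}$ gives $\lambda(G[W])^2\geq (1-O(\eta))\lambda^2$. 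Since $|W|=O(\sqrt{m})$, converting this spectral condition back to an edge count via Nikiforov's bound~(\ref{eq-Niki-2002-cpc}) forces $e(G[W])\geq \bigl(1-1/r-O(\eta+\delta)\bigr)\binom{|W|}{2}$.

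Next, since $G[W]$ is $F$-free and has edge density close to the Tur\'{a}n density $1-1/r$, the classical Erd\H{o}s--Simonovits stability theorem produces a partition $W=U_1\sqcup\cdots\sqcup U_r$ with
\[
d\bigl(G[W], K_{U_1,\ldots,U_r}\bigr)\leq \varepsilon_1 |W|^2\leq O(\varepsilon_1 m).
\]
To extend the partition to a superset $C\supseteq W$, one assigns each vertex $v\in V(G)\setminus W$ to the class $U_i$ minimising $|N(v)\cap U_i|$, and admits $v$ into $C$ only when its ``classification error'' is small. A weighted count of bad edges against $\mathbf{x}$ ensures that discarded vertices carry negligible Perron weight and hence touch only $O(\varepsilon m)$ edges, delivering a partition of $C$ that differs from $G$ in at most $\varepsilon m$ edges. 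For $r=2$, since the Nikiforov extremal family consists of all complete bipartite graphs (not merely balanced ones), no balance condition is imposed, which is why conclusion (a) only asserts closeness to some $K_{A,B}$.

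The main technical obstacle is the transition from ``$G[W]$ has nearly extremal $\lambda$'' to ``$G[W]$ has nearly extremal edge density''. A priori, $G[W]$ could be sparse yet spectrally heavy due to a few high-degree vertices, so the naive Rayleigh bound is insufficient. Resolving this requires either iterating the localisation inside $W$, peeling off atypical high-weight vertices in stages, or proving an $\ell^2$-stability estimate for the Perron eigenvector itself, roughly asserting that near-extremal $\lambda$ forces $\mathbf{x}$ to be close to the Tur\'{a}n eigenvector. Coordinating the parameters $\varepsilon,\delta,\eta$ so that all the error terms compose correctly is the bookkeeping heart of the argument.
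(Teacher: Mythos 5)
The paper does not prove this theorem at all; it is quoted verbatim as a black box from the companion work \cite{LLZ2025-part-1}, so there is no in-paper proof to compare against. I therefore assess your sketch on its own merits.

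Your high-level plan (localise the Perron mass onto a set $W$ of size $\Theta(\sqrt{m})$, apply Erd\H{o}s--Simonovits stability to $G[W]$, then re-absorb the light vertices) is a natural one, but the argument as written has a genuine quantitative gap exactly where you flag it. The $\ell_2$-count gives only $|W| \leq \lambda/\eta^2 \approx \eta^{-2}\sqrt{2(1-1/r)\,m}$. Hence $\binom{|W|}{2}$ can exceed $m$ by a factor of roughly $\eta^{-4}$, and while your Rayleigh-localisation indeed forces $e(G[W]) \geq (1 - O(\eta+\delta))\,m$, this only yields a density
\[
\frac{e(G[W])}{\binom{|W|}{2}} \;\gtrsim\; \frac{(1-O(\eta+\delta))\,m}{\eta^{-4}\,(1-1/r)\,m} \;=\; \Theta(\eta^4),
\]
which for small $\eta$ is nowhere near the Tur\'{a}n density $1-1/r$. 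So the classical Erd\H{o}s--Simonovits stability theorem simply does not apply to $G[W]$: you have a subgraph carrying almost all of the edge mass, but possibly far more than $\sqrt{2m/(1-1/r)}$ vertices, and nothing in the sketch rules out $W$ being padded with vertices of moderate $x$-weight but low degree inside $W$. What is needed is the matching upper bound $|W| \leq (1+o(1))\sqrt{2m/(1-1/r)}$, or equivalently that the Perron mass is nearly uniform on a set of exactly that size; that is a statement about $\ell_\infty/\ell_2$ concentration of $\mathbf{x}$, not something the threshold definition of $W$ delivers.

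Your closing paragraph correctly identifies this as the crux and lists plausible repairs (iterated peeling, $\ell^2$-stability of the eigenvector). But at the level of detail given, none of them is carried out, so the sketch is a heuristic outline rather than a proof. Note also that the extension step (classifying $v \notin W$ by $\min_i |N(v)\cap U_i|$ and discarding high-error vertices) is stated without the weighted-edge accounting that would be needed to certify the final $\varepsilon m$ bound on edit distance; in particular, you must control not just the Perron weight of discarded vertices but their actual degree, and these are not the same thing in a sparse $F$-free graph.
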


We mention that the edge-spectral stability above is stronger than the classical Erd\H{o}s--Simonovits stability theorem \cite{Sim1966} and its vertex-spectral version due to Nikiforov \cite{Niki2009jgt}. 
Such an edge-spectral stability result is interesting in its own right, but somewhat surprisingly it is often a useful stepping stone in proving an exact result when we forbid a specific substructure. 

The following lemma shows that for graphs that are close in the edit-distance, their Perron--Frobenius eigenvectors are close in the $\ell_2$-norm.

\begin{lemma}[Stability of Perron--Frobenius eigenvector]
    \label{lem:Perron-Frobenius-stability}
    Let $G$ and $H$ be two graphs on the same vertex set $V$ such that $d(G,H) \leq e$. Let $\bm{x}$ and $ \bm{y}$ be the unit Perron-Frobenius eigenvectors of $G$ and $H$, respectively. Then 
    \begin{equation} \label{lem-3-4-1}
        \sum_{v \in V} (x_v - y_v)^2 \leq \frac{8\sqrt{e}}{\lambda_1(H) - \lambda_2(H)}
    \end{equation}
    and
    \begin{equation} \label{lem-3-4-2}
        \sum_{v \in V} |x_v^2 - y_v^2| \leq \frac{8 e^{1/4}}{\sqrt{\lambda_1(H) - \lambda_2(H)}}.
    \end{equation}
\end{lemma}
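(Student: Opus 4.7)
My plan is to establish the lemma via a standard Davis--Kahan style perturbation argument, in which the non-negativity of the two Perron--Frobenius eigenvectors bypasses the usual sign-alignment issue. I would first set $E := A_G - A_H$ and note that, since $d(G,H) \leq e$, the symmetric matrix $E$ has at most $2e$ nonzero entries, each of absolute value $1$, whence $\|E\|_F^2 \leq 2e$ and so $\|E\|_{\mathrm{op}} \leq \sqrt{2e}$. Two consequences will be used repeatedly: by Weyl's inequality, $|\lambda_1(G) - \lambda_1(H)| \leq \sqrt{2e}$; and for any unit vector $\bm{u}$, $|\bm{u}^T E \bm{u}| \leq \sqrt{2e}$.

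For \eqref{lem-3-4-1}, I would decompose $\bm{x}$ in an orthonormal eigenbasis of $A_H$. Take $\bm{y} = \bm{v}_1, \bm{v}_2, \ldots$ to be such a basis with corresponding eigenvalues $\lambda_1(H) \geq \lambda_2(H) \geq \cdots$, and write $\bm{x} = c\bm{y} + \bm{z}$ with $\bm{z} \perp \bm{y}$ and $c = \langle \bm{x}, \bm{y} \rangle$. Since $\bm{x}$ and $\bm{y}$ are non-negative unit vectors, $c \in [0,1]$. On one hand,
\[
\bm{x}^T A_H \bm{x} = c^2 \lambda_1(H) + \bm{z}^T A_H \bm{z} \leq c^2 \lambda_1(H) + (1 - c^2) \lambda_2(H).
\]
On the other hand, $\bm{x}^T A_H \bm{x} = \lambda_1(G) - \bm{x}^T E \bm{x} \geq \lambda_1(G) - \sqrt{2e} \geq \lambda_1(H) - 2\sqrt{2e}$. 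Comparing these two bounds yields $(1 - c^2)(\lambda_1(H) - \lambda_2(H)) \leq 2 \sqrt{2e}$, and then, using $1 - c \leq 1 - c^2$ for $c \in [0,1]$ together with $4\sqrt{2} < 8$, I will conclude
\[
\sum_{v \in V} (x_v - y_v)^2 = 2 - 2c \leq 2(1 - c^2) \leq \frac{4\sqrt{2e}}{\lambda_1(H) - \lambda_2(H)} \leq \frac{8\sqrt{e}}{\lambda_1(H) - \lambda_2(H)}.
\]

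The inequality \eqref{lem-3-4-2} then follows quickly by factoring $|x_v^2 - y_v^2| = |x_v - y_v|(x_v + y_v)$ (using $x_v, y_v \geq 0$) and applying Cauchy--Schwarz together with $\|\bm{x} + \bm{y}\|_2 \leq 2$:
\[
\sum_{v \in V} |x_v^2 - y_v^2| \leq \Bigl(\sum_{v} (x_v - y_v)^2\Bigr)^{1/2} \Bigl(\sum_{v} (x_v + y_v)^2\Bigr)^{1/2} \leq 2 \sqrt{\frac{8\sqrt{e}}{\lambda_1(H) - \lambda_2(H)}} \leq \frac{8 e^{1/4}}{\sqrt{\lambda_1(H) - \lambda_2(H)}}.
\]

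There is no serious obstacle in this argument: the non-negativity of the two Perron vectors makes $c \geq 0$ automatic, collapsing the usual $\sin\theta$-bookkeeping of Davis--Kahan into the one-line inequality above. The only delicate point is keeping the constants tight enough to hit the clean $8$ in the stated bound; implicitly, when $\lambda_1(H) = \lambda_2(H)$ the right-hand side is infinite and both conclusions hold trivially.
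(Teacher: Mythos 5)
Your proof is correct and follows essentially the same strategy as the paper's: bound $\bm{x}^T A_H \bm{x}$ below by $\lambda_1(H) - O(\sqrt{e})$, bound it above via spectral decomposition in the eigenbasis of $A_H$, compare to control $1 - \langle\bm{x},\bm{y}\rangle^2$, and then deduce \eqref{lem-3-4-2} by Cauchy--Schwarz. The only cosmetic difference is that the paper obtains the lower bound by chaining $\bm{x}^T A_H\bm{x} + \sqrt{2e} \ge \bm{x}^T A_G\bm{x} \ge \bm{y}^T A_G\bm{y} \ge \bm{y}^T A_H\bm{y} - \sqrt{2e}$ with the Brualdi--Hoffman bound, while you package the same estimate as Weyl's inequality plus a Frobenius-norm bound on the perturbation $E = A_G - A_H$; these are interchangeable.
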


\noindent 
\textbf{Remark.}~
In this paper, we always set $H$ to be a complete multipartite graph that is obtained from Theorem \ref{thm:nikiforov-stability}. In this case, it follows that $\lambda_2(H) = 0$ and $\lambda_1(H) \geq \sqrt{e(H)}$.

\begin{proof}
As $\bm{x}$ realizes the spectral radius of $A_G$, we have
$\bm{x}^T A_G \bm{x} \ge \bm{y}^T A_G \bm{y}$. Note that 
\[ \bm{x}^TA_G \bm{x} = \sum_{ij\in E(G)} 2x_ix_j = \sum_{ij\in E(H)} 2x_ix_j - 
\sum_{ij\in E(H)\setminus E(G)} 2x_ix_j 
+ \sum_{ij\in E(G)\setminus E(H)} 2x_ix_j. \]
Since $E(G) \setminus E(H)$ has at most $e$ edges, by Brualdi--Hoffman's bound, we have 
$\bm{x}^T A_G \bm{x} \le \bm{x}^T A_H \bm{x} + \sqrt{2e}$. Similarly, we have $\bm{y}^T A_H \bm{y} \le \bm{y}^T A_G \bm{y} + \sqrt{2e}$. Then 
\[ \bm{x}^TA_H \bm{x} + \sqrt{2e} \ge 
   \bm{x}^T A_G \bm{x} \ge \bm{y}^T A_G \bm{y} \ge 
\bm{y}^T A_H \bm{y} - \sqrt{2e}. \]
Consequently, we get 
\begin{equation} \label{eq-H-ge}
\bm{x}^T A_H \bm{x} \geq \bm{y}^T A_H \bm{y} - 4\sqrt{e} = \lambda_1(H) - 4\sqrt{e}.
\end{equation}
Let $\bm{y}_1,\bm{y}_2,\ldots,\bm{y}_n$ be the orthogonal unit eigenvectors of $A_H$ corresponding to $\lambda_1, \lambda_2, \ldots, \lambda_n$, where we write  $\bm{y}_1=\bm{y}$. Then 
$\lVert \bm{x} \rVert_2^2 =\sum_{i=1}^n \langle \bm{x}, \bm{y}_i \rangle^2=1$. 
Using the spectral decomposition of $A_H$, 
we have  $A_H=\sum_{i=1}^n \lambda_i\cdot\bm{y}_i\bm{y}_i^T$.  
Thus, it follows that 
\begin{align}   
\bm{x}^T A_H \bm{x} =\sum_{i=1}^n \lambda_i \cdot \langle \bm{x}, \bm{y}_i \rangle^2 
\leq \lambda_1 \cdot \langle \bm{x}, \bm{y} \rangle^2 + \lambda_2 \cdot (1 - \langle \bm{x}, \bm{y} \rangle^2). \label{eq-H-le}
\end{align}
Combining (\ref{eq-H-ge}) with (\ref{eq-H-le}), we conclude that
$$1 - \langle \bm{x}, \bm{y} \rangle^2 \leq \frac{4\sqrt{e}}{\lambda_1 - \lambda_2}.$$
Finally, note that $\langle \bm{x}, \bm{y} \rangle \in [0, 1]$. Thus, we obtain the first inequality \eqref{lem-3-4-1} by
$$\sum_{v \in V} (x_v - y_v)^2 = \langle \bm{x}-\bm{y}, \bm{x}-\bm{y}\rangle=2 (1 - \langle \bm{x}, \bm{y} \rangle) \leq 2(1 - \langle \bm{x}, \bm{y} \rangle^2).$$
To show (\ref{lem-3-4-2}), we apply the Cauchy-Schwarz inequality
$$\sum_{v \in V} |x_v^2 - y_v^2| \leq \left(\sum_{v \in V} (x_v - y_v)^2\right)^{1/2} \left(\sum_{v \in V} (x_v + y_v)^2\right)^{1/2} \leq 2\left(\sum_{v \in V} (x_v - y_v)^2\right)^{1/2}$$
as $(x_v+y_v)^2\le 2(x_v^2+y_v^2)$. The conclusion then follows from the first inequality (\ref{lem-3-4-1}).
\end{proof}

Recall that $K_r[t]$ is the $t$-blow-up of $K_r$, which is the complete $r$-partite graph with each part having $t$ vertices. We write $K_{V_1,\ldots ,V_r}$ for the complete $r$-partite graph with vertex parts $V_1,\ldots ,V_r$. 
We need the following lemma to find a large copy of $K_{r}[t]$ in a dense $r$-partite graph.

\begin{lemma}
    \label{lem:find-turan-graph}
    For every $t$ and $r \geq 2$, there is some constant $\delta_{t, r} >0$ such that the following holds. Suppose that $G$ is an $r$-partite graph with partite sets $V_1, \dots, V_r$ such that $\abs{V_i} \geq t$ for each $i$ and $\abs{V_i}$ has the same size when $r \geq 3$. If 
    $$e(K_{V_1, \dots, V_r}) - e(G) \leq \delta_{t, r} \cdot  \min_{i \neq j} \abs{V_i} \abs{V_j},$$
    then $G$ contains a copy of the complete $r$-partite graph  $K_r[t]$.
\end{lemma}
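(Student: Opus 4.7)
The plan is to run a direct union-bound counting argument. First I would count the ``canonical'' copies of $K_r[t]$ in the complete multipartite graph $K_{V_1,\dots,V_r}$, namely, those whose $i$-th part is a $t$-subset $S_i \subseteq V_i$ for each $i$. Since each $S_i$ can be chosen freely in $V_i$ and all cross-edges are automatically present, there are exactly $\prod_{i=1}^r\binom{|V_i|}{t}$ such copies, which is strictly positive since each $|V_i|\ge t$.

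Next I would estimate how many of these canonical copies are killed by the missing edges of $G$. Each missing edge $uv$ with $u\in V_i$ and $v\in V_j$ destroys precisely those canonical copies with $u\in S_i$ and $v\in S_j$, of which there are
\[
\binom{|V_i|-1}{t-1}\binom{|V_j|-1}{t-1}\prod_{k\ne i,j}\binom{|V_k|}{t}=\frac{t^2}{|V_i|\,|V_j|}\prod_{k=1}^r\binom{|V_k|}{t},
\]
using the elementary identity $\binom{n-1}{t-1}=\tfrac{t}{n}\binom{n}{t}$. Letting $e_{ij}$ denote the number of missing edges between $V_i$ and $V_j$, a union bound then yields that the number of canonical copies surviving in $G$ is at least
\[
\prod_{k=1}^r\binom{|V_k|}{t}\left(1-t^2\sum_{i<j}\frac{e_{ij}}{|V_i|\,|V_j|}\right).
\]

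To finish, I would invoke the hypothesis: since $\sum_{i<j} e_{ij}=e(K_{V_1,\dots,V_r})-e(G)\le \delta_{t,r}\min_{k\ne \ell}|V_k||V_\ell|$, each ratio $e_{ij}/(|V_i||V_j|)$ is at most $e_{ij}/\min_{k\ne\ell}|V_k||V_\ell|$, and summing gives $\sum_{i<j} e_{ij}/(|V_i||V_j|)\le \delta_{t,r}$. Choosing $\delta_{t,r}:=1/(2t^2)$ makes the surviving count at least $\tfrac{1}{2}\prod_k\binom{|V_k|}{t}>0$, producing the required copy of $K_r[t]$ in $G$.

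There is essentially no obstacle here: the proof is a one-shot union bound combined with a binomial identity. Two minor points worth recording are that the constant $\delta_{t,r}$ produced by this argument depends only on $t$ (not on $r$), and that the equal-part-size hypothesis when $r\ge 3$ is not used in the counting at all; it is presumably included only to match the setting in which the lemma is later applied.
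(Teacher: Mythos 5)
Your argument is correct and is essentially the paper's proof: both sample (equivalently, count) $t$-subsets from each part and run a union bound over the $\binom{r}{2}t^2$ cross-pairs, bounding the probability of a missing edge by the ratio of defects to $|V_i||V_j|$. The only difference is bookkeeping: you track $\sum_{i<j} e_{ij}/(|V_i||V_j|)$ exactly rather than bounding each summand by $\delta_{t,r}$ and multiplying by $\binom{r}{2}t^2$, which is why you get the $r$-independent constant $\delta_{t,r}=1/(2t^2)$ while the paper settles for $t^{-2}r^{-2}$; your remark that the equal-part-size hypothesis is unused is also accurate for the paper's proof.
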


The proof of Lemma \ref{lem:find-turan-graph} proceeds using the standard probabilistic method. 

\begin{proof}
    We take $\delta_{t, r} :=t^{-2}r^{-2}$ and sample a $t$-element subset $V_i' := \{v_{i, 1}, \dots, v_{i, t}\}$ from each $V_i$ uniformly at random. For two sampled vertices $v_{i, a}$ and $v_{j, b}$ from different parts $V_i$ and $V_j$, respectively, the probability that there is no edge between $v_{i,a}$ and $v_{j,b}$ is at most
    $$\frac{e(K_{V_1, \dots, V_r}) - e(G)}{\abs{V_i} \abs{V_j}} \leq \delta_{t, r}.$$
    By the union bound, the probability that $G[V_1', \dots, V_r']$ is not isomorphic to $K_r[t]$ is at most
    $\binom{r}{2} \cdot t^2 \cdot \delta_{t, r} < 1$. 
  There exists a sampling of $V_i'$ from each $V_i$ such that $G[V_1',\ldots ,V_r']$ forms a copy of $K_r[t]$. So $G$ contains a copy of $K_r[t]$, as desired. 
\end{proof}

The following lemma is folklore. 

\begin{lemma} \label{lem:e-T}
    Let $T_{n,r}$ be the $r$-partite Tur\'{a}n graph on $n$ vertices. Then 
    \[  \left(1- \frac{1}{r} \right)\frac{n^2}{2} - \frac{r}{8} \le 
    e(T_{n,r})\le \left(1- \frac{1}{r} \right)\frac{n^2}{2}   \]
\end{lemma}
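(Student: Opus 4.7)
The plan is a direct calculation. I will write $n = rq + s$ with $q = \lfloor n/r \rfloor$ and $0 \le s \le r-1$, so that the Tur\'{a}n graph $T_{n,r}$ has $s$ parts of size $q+1$ and $r-s$ parts of size $q$. Rather than summing edges across pairs of parts (which would force a case distinction), I will exploit the identity
\[
e(T_{n,r}) = \sum_{i<j}|V_i||V_j| = \frac{1}{2}\Bigl(n^2 - \sum_{i=1}^r |V_i|^2\Bigr),
\]
which reduces the question to evaluating $\sum_i |V_i|^2 = s(q+1)^2 + (r-s)q^2$.

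Expanding and using $rq = n-s$, this sum simplifies to
\[
\sum_i |V_i|^2 = rq^2 + 2sq + s = q(n+s) + s = \frac{(n-s)(n+s)}{r} + s = \frac{n^2 - s^2}{r} + s.
\]
Substituting back yields the clean closed form
\[
e(T_{n,r}) = \left(1-\frac{1}{r}\right)\frac{n^2}{2} + \frac{s(s-r)}{2r}.
\]

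From here both bounds follow immediately. For the upper bound, observe that $s(s-r) \le 0$ whenever $s \in [0, r]$, so the correction term is nonpositive. For the lower bound, minimize the quadratic $s \mapsto s(s-r)$ on $\mathbb{R}$: it achieves its minimum at $s = r/2$ with value $-r^2/4$, so $s(s-r)/(2r) \ge -r/8$. Combining these gives the two-sided estimate in the statement.

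There is no real obstacle here; the only subtle point is the bookkeeping step that turns $s(q+1)^2 + (r-s)q^2$ into $(n^2 - s^2)/r + s$, and that is a one-line manipulation once one writes $rq = n-s$. No hypothesis on $n$ or $r$ beyond $r \ge 1$ is required for either inequality.
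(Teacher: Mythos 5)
Your proof is correct and essentially matches the paper's argument: both write $n = rq + s$, compute $e(T_{n,r})$ by subtracting within-part non-edges from the complete graph (your identity $\tfrac12(n^2 - \sum_i|V_i|^2)$ is the same as the paper's $\binom{n}{2} - \sum_i \binom{|V_i|}{2}$), arrive at the closed form with correction term $\tfrac{s(s-r)}{2r} = -\tfrac{s(r-s)}{2r}$, and bound it by the same AM--GM/vertex-of-parabola observation $s(r-s)\le r^2/4$.
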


\begin{proof}
    We denote $t:=\lfloor n/r\rfloor$ and write 
    $n=rt +s$ for some integer $0\le s <r$. Then $T_{n,r}$ has $s$ partite sets of size $t+1$, and $r-s$ partite sets of size $t$. It follows that 
    \[ e(T_{n,r}) = {n \choose 2} - s{t+1 \choose 2} - (r-s){t \choose 2}= \left(1- \frac{1}{r} \right)\frac{n^2}{2} - 
    \frac{s(r-s)}{2r}.\]
    By the AM-GM inequality, we have $s(r-s) \le \frac{1}{4}r^2$. Thus, the desired lower bound holds.   
\end{proof}

\section{Color-critical graphs}
\label{sec:proof-thm2-1}

Recall that the case $F=K_{r+1}$ in Theorem \ref{thm:color-critical-I} was proved by Nikiforov \cite{Niki2002}, whose proof is based on the Motzkin--Straus inequality. 
This inequality states that if $G$ is an $n$-vertex $K_{r+1}$-free graph, and $x_1,x_2,\ldots,x_n$ are non-negative weights of vertices of $G$ satisfying $\sum_{i=1}^n x_i =1$, 
then $\sum_{ij\in E} x_ix_j \le \frac{1}{2}(1- \frac{1}{r})$. 
However, his proof does not seem to apply to color-critical graphs, as the Motzkin-Straus inequality cannot be extended to $F$-free graphs in general.

\subsection{Forcing optimal partitions via approximate structures}

Our proof of Theorem \ref{thm:color-critical-I} proceeds via the edge-spectral stability in Theorem \ref{thm:nikiforov-stability} together with a couple of observations when $G$ is close to a complete $r$-partite graph. 
Let $K_r^+[t]$ denote the graph formed by adding an edge to a partite set of the complete $r$-partite graph $K_{r}[t]$. Clearly, if a graph $F$ is a color-critical graph with $\chi (F)=r+1$, then $F$ is a subgraph of $K^+_{r}[t]$ for some integer $t$. Thus, it suffices to establish Theorem \ref{thm:color-critical-I} for $F = K^+_{r}[t]$.

\begin{theorem}
\label{thm:color-critical-when-stable}
For every $t$ and $r \geq 2$, there are $\varepsilon = \varepsilon_{t, r} > 0$ and $n_0 = n_{t, r}$ such that the following holds. Suppose that $G$ is a $K_{r}^+[t]$-free graph with $m$ edges, and $H$ is a complete $r$-partite graph where 
$V(H) \subseteq V(G)$ and each partite set of $H$ has at least $n_{0}$ vertices, and $H$ differs from $G$ in at most $\varepsilon m$ edges. Moreover, suppose that $H$ is regular when $r \geq 3$. Then 
$$\lambda^2(G) \leq \left( 1- \frac{1}{r}\right) \cdot 2m,$$
where the equality holds if and only if $G$ is a complete bipartite graph when $r = 2$, or $G$ is uniquely the regular complete $r$-partite graph when $r \geq 3$. 
\end{theorem}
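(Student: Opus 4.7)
The plan is to combine the approximate structural input furnished by $H$ with two reductions: first use $K_r^+[t]$-freeness to force a very clean partition of $V(G)$, then split the Rayleigh quotient of $G$ into an $r$-partite bulk term and a small residual term that can be absorbed. Writing $H = K_{U_1, \ldots, U_r}$ with $d(G, H) \le \varepsilon m$, I would first prove that no vertex $v \in V(G)$ can have at least $t$ neighbors in each of the $r$ parts $U_1, \ldots, U_r$: indeed, such a $v$ together with a blow-up $K_r[t]$ found inside the (near-complete) cross-part bipartite graphs by a supersaturation argument, combined with any cross-edge incident to $v$, would yield a copy of $K_r^+[t]$. Similarly, I would show each $U_i$ contains only $O_{t,r}(1)$ within-part edges, since one such edge $u u' \subseteq U_i$ together with a copy of $K_{r-1}[t]$ in the remaining parts (each vertex of which is joined to both $u$ and $u'$, again by supersaturation) produces $K_r^+[t]$.

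Using these two facts, let $S \subseteq V(G)$ be the set of vertices that are either endpoints of within-part edges or have fewer than $t$ cross-neighbors into some $U_j$; then $|S| = O_{t, r}(1)$ and the total number of edges of $G$ incident to $S$ is $O_{t,r}(\varepsilon m)$. Set $V_i := U_i \setminus S$ and $G' := G[V_1, \ldots, V_r]$, so that $G'$ is a $K_{r+1}$-free $r$-partite graph with $m'$ edges and $m - m' = O_{t,r}(\varepsilon m)$. Let $\bm{x}$ be the unit Perron eigenvector of $G$ and set $X := \sum_{v \in V(G')} x_v^2$, $A := (1 - \frac{1}{r}) \cdot 2m'$, $B := (1 - \frac{1}{r}) \cdot 2(m - m')$. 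The core estimate to establish is
\begin{equation*}
\lambda(G) \;=\; 2 \sum_{uv \in E(G)} x_u x_v \;\le\; \sqrt{A} \cdot X \;+\; \sqrt{B \cdot 2 X (1 - X)}.
\end{equation*}
The first term comes from applying Nikiforov's inequality (\ref{eq-Niki-2002-cpc}) to $G'$: $\sum_{uv \in E(G')} 2 x_u x_v \le \lambda(G') \cdot X \le \sqrt{A} \cdot X$. The second term bounds the sum over edges incident to $S$ via Cauchy--Schwarz applied to $\sum_{v \in S} x_v \sum_{w \in N(v)} \beta_w x_w$, with $\beta_w \in \{1, 2\}$ distinguishing $w \in S$ from $w \notin S$, together with the identity $\sum_{v \in S} x_v^2 = 1 - X$ and a walk-counting bound on $\sum_w \beta_w x_w^2$ that is controlled by $B$ once the Perron coordinates on the bulk are shown to be nearly uniform.

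Maximizing the right-hand side over $X \in [0, 1]$ yields $\lambda(G) \le \sqrt{A + B} = \sqrt{(1 - \frac{1}{r}) \cdot 2 m}$, which is the claimed bound. For the equality case, I would trace the chain of inequalities in reverse: Cauchy--Schwarz saturation forces the Perron weight on $S$ to vanish, hence $S = \varnothing$, and saturation of (\ref{eq-Niki-2002-cpc}) applied to $G' = G$ forces $G$ to be a complete bipartite graph when $r = 2$ and a regular complete $r$-partite graph when $r \ge 3$, matching the claimed characterization.

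The main obstacle, I expect, is the spectral split above, specifically showing that the two-term estimate is tight up to lower order. This requires the Perron coordinates $x_v$ on the bulk $V(G')$ to be nearly uniform, otherwise the walk-counting factor in the Cauchy--Schwarz step would be strictly worse than $\sqrt{B}$. Here Lemma \ref{lem:Perron-Frobenius-stability} is exactly the right tool: since $d(G, H) \le \varepsilon m$ and $H$ has $\lambda_1(H) - \lambda_2(H) = \Omega(\sqrt{m})$ (with uniform Perron vector when $H$ is regular), we obtain $\sum_v (x_v - y_v)^2 = O((\varepsilon m)^{1/2} / \sqrt{m})$, which is strong enough to pin down the bulk coordinates while still permitting the $O(1)$-sized set $S$ to carry a controllable share of the eigenvector mass. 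Tuning the thresholds used to define $S$, the constant $\varepsilon$, and the supersaturation parameters so that everything closes up to the exact bound $\sqrt{(1 - \frac{1}{r}) \cdot 2m}$ is where the bulk of the technical bookkeeping occurs.
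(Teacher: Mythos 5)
Your spectral framework --- splitting the Rayleigh quotient into an $r$-partite bulk term and a residual, bounding the bulk by Nikiforov's inequality \eqref{eq-Niki-2002-cpc}, bounding the residual by Cauchy--Schwarz with the $\beta_w$ weighting, and optimizing over $X$ to reach $\sqrt{A+B}$ --- matches the paper exactly, and Lemma \ref{lem:Perron-Frobenius-stability} is invoked in precisely the role you describe. The gap is in the combinatorial setup of $S$. Your two preliminary reductions are too strong as stated: a vertex $v$ with exactly $t$ neighbours in each $U_i$ need not close into a $K_r^+[t]$, because the supersaturation Lemma \ref{lem:find-turan-graph} needs the candidate sets $N_{U_i}(v)$ to have size $\Omega(|U_i|)$, not merely $t$, before a missing-edge budget of $\varepsilon m$ can be absorbed; correspondingly, the paper's analogue (Claim \ref{claim:4-3}) uses the threshold $\frac{1}{8r}|U_i|$. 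The same issue undermines the claim that there are $O_{t,r}(1)$ within-part edges, since one cannot rule out within-part edges between low-cross-degree vertices. And $|S|=O_{t,r}(1)$ is false: an edit budget of $\varepsilon m$ easily allows $\Omega(\varepsilon n)$ vertices of $U_i$ to lose most of their cross-edges, and all of these fall into your $S$.

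The more consequential issue is that your $S$ does not deliver the sharp neighbourhood bound that the residual Cauchy--Schwarz step needs. To obtain $\sum_{v\in S} x_v \sum_{w\in N(v)}\beta_w x_w \le \sqrt{(1-X)\cdot 2BX}$ one must show, for every $v\in S$, that $\sum_{w\in N(v)}\beta_w^2\gamma_w x_w^2 \le 4\bigl(1-\tfrac{1}{r}\bigr)X$; since $\beta_w^2\gamma_w\le 4$ always and $X<1$, this forces $\sum_{w\in N(v)} x_w^2$ to lie strictly and quantitatively below $1-\tfrac{1}{r}$. That does not follow from your definition, under which $v\in S$ could simply miss $t$ cross-neighbours in one part while having nearly full cross-degree elsewhere. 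The paper's definition of $V_i$ uses a second threshold $\frac{2}{3}|U_j|$, so that any $v\in S$ suffers cross-degree shortfalls in two distinct parts simultaneously (one from Claim \ref{claim:4-3}, one from $v\notin V_i$). This yields $\sum_{w\in N(v)} x_w^2 \le 1-\frac{4}{3r}+\frac{1}{4r^2}$ (Claim \ref{claim:4-5}), which, combined with $X\ge 1-\frac{1}{16r^2}$ (Claim \ref{claim:4-6}), closes the estimate with no slack. The two-threshold definition of $S$ is therefore not bookkeeping to tune at the end; it is the key combinatorial ingredient missing from your argument.
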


\noindent 
\textbf{Remark.}~The statements of the cases $r = 2$ and $r \geq 3$ are slightly different, while the proofs are entirely identical. Theorem \ref{thm:color-critical-when-stable} will be used in our proofs of both Theorems \ref{thm:color-critical-I} and \ref{thm:bipartite-rough}.

\begin{proof}
   In our argument, we take $\varepsilon_{t, r} = (40r)^{-8}\delta_{t, r}$ and $n_{t, r} = 8rt$, where $\delta_{t,r}=t^{-2}r^{-2}$ is determined in Lemma \ref{lem:find-turan-graph}. 
    By the assumption, $H$ is a complete $r$-partite graph. Let $U_1,U_2, \dots, U_r$ be the partite sets of $H$. 
     In the sequel, we prove a sequence of claims for our purpose.

    \begin{claim} \label{claim:4-2}
        If $U_i'$ is a subset of $U_i$ with $|U_i'|\ge \frac{1}{8r}|U_i|$ for every $i\in [r]$, then there is a copy of the complete $r$-partite graph $K_r[t]$ in $G[U_1', \dots, U_r']$.
    \end{claim}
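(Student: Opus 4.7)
The plan is straightforward: apply Lemma \ref{lem:find-turan-graph} to the $r$-partite subgraph $G[U_1', \dots, U_r']$, after possibly trimming the parts to have equal size when $r \ge 3$.

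First I would bound the number of ``missing'' edges. Since $H$ is the complete $r$-partite graph with parts $U_1, \dots, U_r$, the induced subgraph $H[U_1', \dots, U_r']$ is precisely the complete $r$-partite graph $K_{U_1', \dots, U_r'}$. Consequently, every edge of $K_{U_1', \dots, U_r'}$ not present in $G$ lies in $E(H) \setminus E(G)$, which yields
\[
e(K_{U_1', \dots, U_r'}) - e(G[U_1', \dots, U_r']) \le d(G, H) \le \varepsilon m.
\]

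Second, I would verify the size conditions required by Lemma \ref{lem:find-turan-graph}. Because $|U_i| \ge n_{t,r} = 8rt$, each $|U_i'| \ge \tfrac{1}{8r}|U_i| \ge t$. For $r \ge 3$, Lemma \ref{lem:find-turan-graph} additionally requires equal part sizes, so I would simply shrink each $U_i'$ down to a common size $t_0 := \min_i |U_i'|$; this can only decrease $e(G[U_1',\dots,U_r'])$ and the bound above continues to hold. To estimate $\min_{i \ne j}|U_i'||U_j'|$, I would first note that $e(H) \ge e(G) - \varepsilon m \ge (1-\varepsilon)m$. For $r = 2$, $|U_1||U_2| = e(H)$ gives $|U_1'||U_2'| \ge \tfrac{(1-\varepsilon)m}{256}$. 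For $r \ge 3$, regularity of $H$ yields $|U_i| = n/r$ with $n = |V(H)|$, and $e(H) = \binom{r}{2}(n/r)^2$ gives $t_0 \ge n/(8r^2)$ and hence $t_0^2 \ge m/(64 r^4)$.

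In either case one has $\min_{i \ne j}|U_i'||U_j'| \ge m/(64 r^4)$. The choice $\varepsilon_{t,r} = (40r)^{-8}\delta_{t,r}$ comfortably satisfies $\varepsilon_{t,r} \le \delta_{t,r}/(64 r^4)$, so
\[
e(K_{U_1', \dots, U_r'}) - e(G[U_1', \dots, U_r']) \le \varepsilon m \le \delta_{t,r} \cdot \min_{i \ne j}|U_i'||U_j'|,
\]
and Lemma \ref{lem:find-turan-graph} produces a copy of $K_r[t]$ inside $G[U_1', \dots, U_r']$. The argument is essentially bookkeeping; the only mild wrinkle is harmonising the $r = 2$ and $r \ge 3$ cases, which is handled by the trimming step.
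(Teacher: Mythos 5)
Your proof is correct and follows essentially the same route as the paper: bound the missing edges in $G[U_1',\dots,U_r']$ by $d(G,H)\le\varepsilon m$, bound $\min_{i\ne j}|U_i'||U_j'|$ from below using $|U_i'|\ge\frac{1}{8r}|U_i|$ and $e(H)\ge(1-\varepsilon)m$, and then invoke Lemma~\ref{lem:find-turan-graph} with the chosen $\varepsilon_{t,r}=(40r)^{-8}\delta_{t,r}$. The only difference is your trimming step to enforce equal part sizes for $r\ge 3$; the paper applies Lemma~\ref{lem:find-turan-graph} directly to the $U_i'$ without this step, which is harmless because the probabilistic proof of that lemma never actually uses the equal-size hypothesis, but your version is a bit more scrupulous about matching the lemma's stated conditions.
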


    \begin{poc}
        We have $|{U_i'}| \geq \frac{1}{8r}|U_i| \ge 
        \frac{n_{t,r}}{8r} \geq t$. Since $d(G, H) \leq \varepsilon m$, we have 
        \[ e(K_{U_1', \dots, U_r'}) - e(G[U_1', \dots, U_r'] )\leq \varepsilon m. \] 
        Thus, by assumption for any $i \neq j$, we have
    \begin{align*}
    |U_i'| |{U_j'}| &\ge \frac{1}{64r^2}|U_i| |U_j|
    = \frac{1}{64r^2\binom{r}{2}} e(K_{U_1, \dots, U_r}) \geq \frac{1}{64r^2 \binom{r}{2}} \cdot (1 - \varepsilon) m \\
    &\geq \frac{1}{64r^4 \varepsilon} (e(K_{U_1', \dots, U_r'}) - e(G[U_1', \dots, U_r'] )).
    \end{align*}
    Applying Lemma \ref{lem:find-turan-graph}, we conclude that $G[U_1', \dots, U_r']$ contains a copy of $K_{r}[t]$.
     \end{poc}

    \begin{claim} \label{claim:4-3}
        For any vertex $v\in V(G)$, there exists some $i\in [r]$ such that $|N_{U_i}(v)| \leq \frac{1}{8r} |{U_i}|$. 
    \end{claim}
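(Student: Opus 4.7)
The plan is to argue by contradiction using Claim \ref{claim:4-2}. Suppose that $v$ admits $\abs{N_{U_i}(v)} > \frac{1}{8r}\abs{U_i}$ for every $i \in [r]$. Setting $U_i' := N_{U_i}(v)$, the hypothesis of Claim \ref{claim:4-2} is fulfilled, so $G[U_1',\dots,U_r']$ contains a copy of $K_r[t]$; let $W_i \subseteq U_i'$ with $\abs{W_i} = t$ denote its partite sets. Since each $W_i$ lies inside $N(v)$, the vertex $v$ is adjacent to every one of the $rt$ vertices of this $K_r[t]$.

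The goal of the next step is to upgrade $\{v\}\vee K_r[t]$ to a copy of the forbidden subgraph $K_r^+[t]$, which is sufficient because any color-critical $F$ with $\chi(F)=r+1$ embeds into some $K_r^+[t]$. I absorb $v$ into the first partite class: choose an arbitrary vertex $w \in W_1$ and form the $r$ sets $V_1 := (W_1\setminus\{w\}) \cup \{v\}$ and $V_i := W_i$ for $i \geq 2$. Each $V_i$ has exactly $t$ vertices and the sets are pairwise disjoint. For $i \neq j$, all edges between $V_i$ and $V_j$ are present: the $W_i$–$W_j$ edges come from the $K_r[t]$, while the extra edges incident to $v$ come from $v \in N(W_j)$. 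The sets $V_2,\dots,V_r$ are independent as they coincide with partite sets of $K_r[t]$, whereas $V_1$ contains the edge $vw'$ for any $w' \in W_1 \setminus \{w\}$ (using $t \geq 2$, which is implicit since $K_r^+[t]$ is only defined when a partite set has at least two vertices). This exhibits a $K_r^+[t]$ in $G$, contradicting $K_r^+[t]$-freeness.

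There is no serious obstacle in this claim once Claim \ref{claim:4-2} is available; the argument is a short counting/embedding observation leveraging that $v$ plays the role of a universal vertex on the common neighborhood. The heavy lifting is concentrated in Claim \ref{claim:4-2}, which in turn relies on Lemma \ref{lem:find-turan-graph} and the edit-distance closeness between $G$ and $H$. Thus the constant $\tfrac{1}{8r}$ is not sharp but simply chosen to keep each $N_{U_i}(v)$ large enough so that the Tur\'an-type embedding lemma applies uniformly across the partite sets.
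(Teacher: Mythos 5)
Your proof is correct and follows essentially the same route as the paper: set $U_i' := N_{U_i}(v)$, invoke Claim \ref{claim:4-2} to locate a $K_r[t]$ inside $G[U_1',\dots,U_r']$, then observe that $v$ is complete to this $K_r[t]$ and hence yields a copy of $K_r^+[t]$. Your explicit absorption of $v$ into the first partite class (replacing a vertex $w$) simply spells out the last step that the paper compresses into one sentence.
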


    \begin{poc}
    Suppose on the contrary that 
    there exists a vertex $v\in V(G)$ such that $|N_{U_i}(v)|> \frac{1}{8r}|U_i| $ for every $i\in [r]$. 
    We denote by $U_i'=N_{U_i}(v)$ for every $i\in [r]$. 
    Then Claim \ref{claim:4-2} shows that the $r$-partite subgraph $G[U_1', \dots, U_r']$ contains a copy of $K_{r}[t]$. This copy, together with the vertex $v$, produces a copy of $K_{r}^+[t]$, 
    which is a contradiction. 
    \end{poc}

      For each $v \in V(G)$, let $N_{U_i}(v)$ denote the set of vertices of $U_i$ that are adjacent to $v$ in the graph $G$. In other words, $N_{U_i}(v)=N_G(v)\cap U_i$ for every $i\in [r]$.   
     The key idea is to define the following vertex sets. 
     For each $i \in [r]$, let $V_i$ be the set of vertices defined as 
    \[  V_i:=\left\{v\in V(G) : |N_{U_j}(v)| \ge \frac{2}{3}|U_j| \text{~for any $j\in [r]\setminus \{i\}$} \right\} . \] 

    \begin{claim}  \label{claim:4-4}
    The sets $V_1,V_2,\ldots ,V_r$ are pairwise disjoint independent sets of $G$.
    \end{claim}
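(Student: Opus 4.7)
The plan is to establish the two assertions in the claim separately. For pairwise disjointness, the argument is short and relies on Claim \ref{claim:4-3}. Suppose some $v$ lies in both $V_i$ and $V_{i'}$ with $i \neq i'$. The defining condition of $V_i$ forces $|N_{U_j}(v)| \geq \frac{2}{3}|U_j|$ for every $j \neq i$, and the condition for $V_{i'}$ handles every $j \neq i'$. Since $([r] \setminus \{i\}) \cup ([r] \setminus \{i'\}) = [r]$, we conclude $|N_{U_j}(v)| \geq \frac{2}{3}|U_j|$ for all $j \in [r]$. This directly contradicts Claim \ref{claim:4-3}, which guarantees some $k \in [r]$ with $|N_{U_k}(v)| \leq \frac{1}{8r}|U_k|$, since $\frac{2}{3} > \frac{1}{8r}$ whenever $r \geq 2$.

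For the independence of each $V_i$, I would argue by contradiction: suppose $u, v \in V_i$ and $uv \in E(G)$. The goal is to construct a copy of $K_r^+[t]$ in $G$ in which $uv$ plays the role of the extra within-partite-set edge, contradicting the hypothesis that $G$ is $K_r^+[t]$-free. For each $j \neq i$, define $U_j' := (N_{U_j}(u) \cap N_{U_j}(v)) \setminus \{u, v\}$. Inclusion-exclusion together with the definition of $V_i$ gives $|U_j'| \geq \frac{2}{3}|U_j| + \frac{2}{3}|U_j| - |U_j| - 2 = \frac{1}{3}|U_j| - 2$, which exceeds $\frac{1}{8r}|U_j|$ because $|U_j| \geq n_{t,r} = 8rt$. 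Set $U_i' := U_i \setminus \{u, v\}$, which likewise has size at least $\frac{1}{8r}|U_i|$. Apply Claim \ref{claim:4-2} to the sets $U_1', \dots, U_r'$ to extract a copy of $K_r[t]$ in $G[U_1', \dots, U_r']$ with partite sets $S_1, \dots, S_r$ where $S_j \subseteq U_j'$.

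Now take the partite sets of the desired $K_r^+[t]$ to be $P_j := S_j$ for $j \neq i$, and $P_i := \{u, v\} \cup S_i''$, where $S_i'' \subseteq S_i$ has size $t - 2$ (possible since $|S_i| = t \geq 2$). The complete bipartite edges between $P_j$ and $P_k$ for $j, k \neq i$ come from the embedded $K_r[t]$; the edges between $S_i''$ and $P_j$ for $j \neq i$ come from the same source; the edges between $\{u, v\}$ and $S_j$ for $j \neq i$ exist because $S_j \subseteq N(u) \cap N(v)$ by construction; finally, the within-part edge $uv$ sits inside $P_i$. Hence $G$ contains $K_r^+[t]$, the desired contradiction.

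I do not anticipate any real obstacle. The $\frac{2}{3}$ threshold used to define $V_i$ is calibrated precisely so that, after intersecting two large neighborhoods in $U_j$, the surviving common neighborhood still clears the $\frac{1}{8r}$ cutoff required by Claim \ref{claim:4-2}; excising the two vertices $u, v$ is negligible because $|U_j| \geq n_{t,r}$ is much larger than any absolute constant. The only mild care needed is to place $u, v$ outside the embedded $K_r[t]$, which is exactly why we remove $\{u, v\}$ from each $U_j'$ before invoking Claim \ref{claim:4-2}.
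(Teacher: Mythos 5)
Your proof is correct and follows essentially the same route as the paper: the disjointness argument via Claim \ref{claim:4-3} is identical, and the independence argument (intersecting the two large neighborhoods, removing $\{u,v\}$, and invoking Claim \ref{claim:4-2}) is the same construction the paper uses, with the minor cosmetic difference that the paper writes $U_j' := N(u)\cap N(u')\cap U_j$ for $j\neq i$ without subtracting $\{u,u'\}$, which is harmless since neither $u$ nor $u'$ can lie in its own neighborhood. The one place you go further than the paper is in spelling out explicitly how the embedded $K_r[t]$ is ``combined'' with the edge $uv$: you swap two vertices of $S_i$ for $u,v$ to form the $i$-th part of size $t$, which is the right way to make the paper's implicit final step precise (and relies on $t\ge 2$, which holds in every invocation of this theorem).
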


    \begin{poc}
    We assume on the contrary that there exists a vertex $v\in V_i\cap V_j$ for some $i\neq j$. By the definition of $V_i$, we know that $|N_{U_k}(v) | \geq \frac{2}{3} \abs{U_k}$ for any $k \neq i$. 
    Similarly, the definition of $V_j$ implies that $|N_{U_k}(v)|\ge \frac{2}{3}|U_k|$ for any $k\neq j$. In conclusion, we get $|N_{U_k}(v)| \ge \frac{1}{8r}|U_k|$ for every $k\in [r]$,  
    which  contradicts with Claim \ref{claim:4-3}.

    We suppose on the contrary that there exists an edge $uu'$ in $G[V_i]$ for some index $i$. We denote $U_i':=U_i\setminus \{u,u'\}$. 
    For other $j\in [r]\setminus \{i\}$, we denote $U_j':=N(u)\cap N(u')\cap U_j$. 
    Clearly, we have $|{U_i'}| \ge |U_i|-2 \geq \frac{1}{2} |{U_i}|$. For each $j \in [r]\setminus \{i\}$, we have
    $$|{U_j'}| \geq |N_{U_j}(u)| + |N_{U_j}(u')| - |{U_j}| 
    \geq \frac{1}{3} |{U_j}|.$$
    By Claim \ref{claim:4-2}, there is a copy of $K_{r}[t]$ in $G[U_1', \dots, U_r']$. Combining with the edge $uu'$ in $G[V_i]$, we find a copy of $K_{r}^+[t]$ in $G$, which is a contradiction. So $V_i$ is an independent set of $G$. 
    \end{poc}

    We denote by $\bm{x} \in \mathbb{R}^{|V|}$ the unit 
    Perron--Frobenius eigenvector of $G$.

    \begin{claim}
        \label{claim-bound-square}
        Let $R$ be the set of vertices that are not in any of the $U_i$'s. Then 
        \[ \sum_{w \in R} x_w^2 + \sum_{i = 1}^r \sum_{w \in U_i} \left| x_w^2 - \frac{1}{r|U_i|} \right| \leq 9 \varepsilon^{1/4} \]
    \end{claim}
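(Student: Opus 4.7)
The plan is to apply the Perron--Frobenius eigenvector stability result (Lemma~\ref{lem:Perron-Frobenius-stability}) directly with the two graphs $G$ and $H$, viewing $H$ as a graph on $V(G)$ with the vertices of $R$ as isolated vertices. The claim then falls out as soon as I identify the Perron--Frobenius eigenvector of $H$ explicitly and verify the spectral gap that feeds into the lemma.

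First I would pin down the Perron--Frobenius eigenvector $\bm{y}$ of $H$. By hypothesis, $H$ is a complete bipartite graph when $r=2$ and a regular complete $r$-partite graph when $r\ge 3$. In both cases the vertices within each part $U_i$ are interchangeable under the automorphism group of $H$, so $\bm{y}$ is constant on each part; write $y_w = c_i$ for $w \in U_i$. The eigen-equation $\lambda_1(H)\, c_i = \sum_{j \neq i} |U_j|\, c_j$ together with the normalisation $\|\bm{y}\|_2 = 1$ yields $c_i^2 = \tfrac{1}{r|U_i|}$ in both regimes, and setting $y_w = 0$ for $w \in R$ is consistent with viewing $H$ as a graph on $V(G)$ with isolated vertices in $R$.

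Next I would justify the constants that feed into Lemma~\ref{lem:Perron-Frobenius-stability}. A direct spectral computation (as already recorded in the remark following Lemma~\ref{lem:Perron-Frobenius-stability}) gives $\lambda_2(H) = 0$ and $\lambda_1(H) \geq \sqrt{e(H)}$ for these complete multipartite graphs. Since $d(G,H) \leq \varepsilon m$ and $e(G) = m$, we have $e(H) \geq (1-\varepsilon)m$, so $\lambda_1(H) - \lambda_2(H) \geq \sqrt{(1-\varepsilon)\,m}$.

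Applying inequality \eqref{lem-3-4-2} of Lemma~\ref{lem:Perron-Frobenius-stability} with $e = \varepsilon m$ then gives
\[
\sum_{v \in V(G)} |x_v^2 - y_v^2| \;\le\; \frac{8\,(\varepsilon m)^{1/4}}{\sqrt{\sqrt{(1-\varepsilon)m}}} \;=\; \frac{8\,\varepsilon^{1/4}}{(1-\varepsilon)^{1/4}} \;\le\; 9\,\varepsilon^{1/4},
\]
the last inequality holding because $\varepsilon \le \varepsilon_{t,r} = (40r)^{-8}\delta_{t,r}$ is tiny. Splitting the left-hand side according to whether $v \in R$ or $v \in U_i$ and substituting $y_v = 0$ or $y_v^2 = 1/(r|U_i|)$ reproduces exactly the expression in the claim. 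The only nontrivial ingredient is Lemma~\ref{lem:Perron-Frobenius-stability} itself; once that is available there is no genuine obstacle, and the claim is essentially a one-line corollary.
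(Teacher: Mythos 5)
Your proof is correct and follows essentially the same route as the paper: view $H$ as a graph on $V(G)$ with $R$ as isolated vertices, compute the Perron--Frobenius eigenvector of the (bipartite or regular multipartite) graph $H$ explicitly, note $\lambda_2(H)=0$ and bound $\lambda_1(H)$ from below, and plug into inequality \eqref{lem-3-4-2}. The only cosmetic difference is that you bound $\lambda_1(H)\ge\sqrt{e(H)}$ whereas the paper uses the exact value $\sqrt{(1-\nicefrac{1}{r})2e(H)}\ge\sqrt{0.9m}$; both land at the same $9\varepsilon^{1/4}$.
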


    \begin{poc} 
    Recall that $H=K_{U_1,\ldots ,U_r}$. 
     In what follows, we apply Lemma \ref{lem:Perron-Frobenius-stability} to $G$ and $H$.  
    The graph $H$ can be viewed as a graph on the vertex set of $G$ by extending some isolated vertices. 
    Then $V(G)=V(H)\cup R$. 
    The unit Perron--Frobenius eigenvector $\bm{y}$ of $H$ is given as  
    $$\bm{y}_v = \begin{cases}
        \frac{1}{\sqrt{r \abs{U_i}}}, & \text{if}~v \in U_i; \\
        0, & \text{if}~v \in R.
    \end{cases}$$ 
   Note that $H$ is a complete $r$-partite graph  and $e(H)\ge (1- \varepsilon)m$. Then 
   \[ \lambda_1(H)=\sqrt{\Big(1- \frac{1}{r}\Big)2e(H)}\ge \sqrt{0.9m} \quad \text{and} \quad  \lambda_2(H)=0. \]  
   By Lemma \ref{lem:Perron-Frobenius-stability}, we get 
    $$\sum_{w \in R} x_w^2 + \sum_{i = 1}^r \sum_{w \in U_i} \abs{x_w^2 - y_w^2} \leq \frac{8(\varepsilon m)^{1/4}}{\sqrt{\lambda_1(H)}}\le 9 \varepsilon^{1/4}.$$
    Recalling the expression of the vector $\bm{y}$, we have
    \begin{equation*} 
    \sum_{w \in R} x_w^2 + \sum_{i = 1}^r \sum_{w \in U_i} \left| x_w^2 - \frac{1}{r|U_i|} \right| \leq 9 \varepsilon^{1/4},
    \end{equation*} 
    as desired. 
    \end{poc}

    \begin{claim} \label{claim:4-5}
Let $S$ be the set of vertices not in any of the $V_i$'s.  
 Then for $v\in S$, 
    $$\sum_{w \in N(v)} x_w^2 \leq 1 - \frac{4}{3r} + \frac{1}{4r^2}.$$
    \end{claim}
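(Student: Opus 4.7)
The plan is to decompose the sum $\sum_{w\in N(v)} x_w^2$ according to the partition $V(G)=R \cup U_1 \cup \cdots \cup U_r$, approximate the $x_w^2$ mass on each part $U_i$ using Claim \ref{claim-bound-square}, and reduce the bound to a purely combinatorial inequality on the relative densities $\alpha_i := \abs{N_{U_i}(v)}/\abs{U_i}$. Concretely, for each $i$ the bound
\[
\sum_{w \in N(v) \cap U_i} x_w^2 \leq \frac{\abs{N_{U_i}(v)}}{r\abs{U_i}} + \sum_{w \in U_i} \abs{x_w^2 - \tfrac{1}{r\abs{U_i}}} = \frac{\alpha_i}{r} + \sum_{w \in U_i} \abs{x_w^2 - \tfrac{1}{r\abs{U_i}}}
\]
combines with $\sum_{w\in N(v)\cap R} x_w^2 \leq \sum_{w\in R} x_w^2$ and Claim \ref{claim-bound-square} to yield
\[
\sum_{w \in N(v)} x_w^2 \leq \frac{1}{r}\sum_{i=1}^r \alpha_i + 9\varepsilon^{1/4}.
\]

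The combinatorial core is to show $\sum_i \alpha_i \leq r - \frac{4}{3} + \frac{1}{8r}$ for every $v \in S$. Claim \ref{claim:4-3} yields some $i^* \in [r]$ with $\alpha_{i^*} \leq \frac{1}{8r}$, while $v \notin V_{i^*}$ together with the definition of $V_{i^*}$ forces some $j \neq i^*$ with $\alpha_j < \frac{2}{3}$. Using the trivial bound $\alpha_i \leq 1$ for the remaining $r - 2$ indices, we conclude
\[
\sum_{i=1}^r \alpha_i \leq \frac{1}{8r} + \frac{2}{3} + (r-2) = r - \frac{4}{3} + \frac{1}{8r},
\]
so $\frac{1}{r}\sum_i \alpha_i \leq 1 - \frac{4}{3r} + \frac{1}{8r^2}$. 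The choice $\varepsilon_{t,r} = (40r)^{-8} \delta_{t,r}$ makes $9\varepsilon^{1/4}$ far smaller than $\frac{1}{8r^2}$, so the desired bound $\sum_{w \in N(v)} x_w^2 \leq 1 - \frac{4}{3r} + \frac{1}{4r^2}$ follows.

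I expect no substantial obstacle. The only delicate point is ensuring that the two indices $i^*$ and $j$ are distinct, which is immediate by construction since the definition of $V_{i^*}$ selects $j \neq i^*$; this distinctness is precisely what produces the gain of $\frac{4}{3r}$ over the trivial upper bound $\sum_i \alpha_i \leq r$, and the relative size comparison between $\alpha_{i^*}$ and $\alpha_j$ (one of order $\frac{1}{r}$ and the other bounded by $\frac{2}{3}$) is what makes the $\frac{1}{8r^2}$ slack sufficient to absorb the error term.
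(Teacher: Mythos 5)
Your proof is correct and follows essentially the same route as the paper: decompose the neighbourhood sum over $R$ and the $U_i$'s, use Claim~\ref{claim-bound-square} to reduce to the relative densities $\alpha_i = |N_{U_i}(v)|/|U_i|$, and then invoke Claim~\ref{claim:4-3} (giving $\alpha_{i^*} \le \tfrac{1}{8r}$) together with $v\notin V_{i^*}$ (giving some $j\neq i^*$ with $\alpha_j < \tfrac{2}{3}$). Your bookkeeping is marginally cleaner than the paper's — by keeping the absolute values you absorb the $R$-mass and all the $U_i$-error terms into a single application of Claim~\ref{claim-bound-square}, yielding an error of $9\varepsilon^{1/4}$, whereas the paper applies it twice and gets $18\varepsilon^{1/4}$; both are comfortably below $\tfrac{1}{8r^2}$ under the stated choice of $\varepsilon$, so the difference is cosmetic.
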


    \begin{poc}   
    By Claim \ref{claim:4-3}, for any vertex $v\in V(G)$, 
     there exists an index $i\in [r]$ such that $|N_{U_i}(v)| \leq \frac{1}{8r} |{U_i}|$. On the other hand, we see that $v\notin V_i$ since $v\in S$. 
     By the definition of $V_i$, there exists $j\neq i$ such that $|N_{U_j}(v)|< \frac{2}{3}|U_j|$. 
    Note that 
    \[ \sum_{w\in N(v)}  x_w^2 \le 
    \sum_{w\in N_R(v)} x_w^2 + 
    \sum_{w\in N_{\bar{R}}(v)} x_w^2. \]
    By Claim \ref{claim-bound-square}, we get 
    \[   \sum_{w\in N_R (v)} x_w^2 \le \sum_{w\in R} x_w^2 \le 9 \varepsilon^{1/4}.  \]
Using Claim \ref{claim-bound-square} again, we have
\[ \sum_{i=1}^r \sum_{w\in N_{U_i}(v)} \left( x_w^2 - \frac{1}{r|U_i|}\right) \le 9 \varepsilon^{1/4}. \]
Since $\overline{R}=U_1\sqcup \cdots \sqcup U_r$, it follows that   
\[  \sum_{w\in N_{\overline{R}}(v)} x_w^2 = \sum_{i=1}^r \sum_{w\in N_{U_i}(v)} x_w^2 \le \sum_{i=1}^r \frac{|N_{U_i}(v)|}{r|U_i|} 
+ 9\varepsilon^{1/4}. \]
Therefore, we conclude that 
    \begin{align*}
    \sum_{w \in N(v)} x_w^2 &\leq \sum_{i = 1}^r \frac{|N_{U_i}(v)|}{r |U_i|} + 18\varepsilon^{1/4} \\
    &\leq \frac{|N_{U_i}(v)|}{r|U_i|} + \frac{|N_{U_j}(v)|}{r|U_j|} + \frac{r-2}{r} + 18\varepsilon^{1/4} \\
    &\leq \frac{1}{8r^2} + \frac{2}{3r} + \frac{r - 2}{r} + \frac{1}{8r^2} \\
    & = 1 - \frac{4}{3r} + \frac{1}{4r^2},
    \end{align*}
    as desired. 
    \end{poc}

    \begin{claim}   \label{claim:4-6}
    We have $\sum\limits_{v \in \overline{S}} x_v^2\geq 1 - \frac{1}{16r^2}$. 
    \end{claim}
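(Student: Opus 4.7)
The plan is to prove the equivalent bound
$$\sum_{v \in S} x_v^2 \leq \frac{1}{16r^2},$$
since $\bm{x}$ is a unit vector. I would decompose
$$\sum_{v \in S} x_v^2 = \sum_{v \in S \cap R} x_v^2 + \sum_{v \in S \cap V(H)} x_v^2,$$
and handle the two contributions separately.

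The first sum is immediately controlled by Claim~\ref{claim-bound-square}: $\sum_{v \in S \cap R} x_v^2 \leq \sum_{v \in R} x_v^2 \leq 9\varepsilon^{1/4}$. For the second, I would again use Claim~\ref{claim-bound-square} to replace each $x_v^2$ by the target value $\frac{1}{r|U_i|}$, at a total error cost of at most $9\varepsilon^{1/4}$, reducing the task to bounding the purely combinatorial quantity
$$\sum_{i=1}^{r} \frac{|S \cap U_i|}{r|U_i|}.$$

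The key combinatorial input comes from the definition of $V_i$. For every $v \in S \cap U_i$, since $v \notin V_i$, there exists some $k \neq i$ with $|N_{U_k}(v)| < \tfrac{2}{3}|U_k|$. Because $v$ is joined to all of $U_k$ in $H$, this forces more than $\tfrac{1}{3}|U_k|$ edges of $E(H) \setminus E(G)$ to be incident to $v$. Summing this over all such $v$, and using that $|E(H) \setminus E(G)| \leq \varepsilon m$ while each missing edge contributes to at most two vertices, I would obtain bounds of the shape $|S \cap U_i| \leq \frac{3\varepsilon m}{|U_k|}$ for a suitable $k \neq i$. Because the product $|U_i|\,|U_k|$ is comparable to $e(H) \geq (1-\varepsilon) m$, pairing these estimates gives $\sum_i |S \cap U_i|/(r|U_i|) = O(\varepsilon)$.

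Combining everything,
$$\sum_{v \in S} x_v^2 \leq 18\varepsilon^{1/4} + O(\varepsilon),$$
which is well below $\frac{1}{16r^2}$ by the choice $\varepsilon = (40r)^{-8}\delta_{t,r}$, yielding the claim. The main subtlety I anticipate is the case $r = 2$ with highly unbalanced parts, say $|U_1| = n_0$ constant while $|U_2| = \Theta(m)$: then $\min_k |U_k|$ can be $O(1)$, so any naive attempt to control $|S \cap V(H)|$ via a single bound $|S \cap V(H)| \leq O(\varepsilon m / \min_k |U_k|)$ fails. The correct bookkeeping is to pair each $|S \cap U_i|$ with $|U_k|$ for the \emph{opposite} partite set, so that $|U_1||U_2| \approx m$ appears in the denominator and absorbs the size blowup.
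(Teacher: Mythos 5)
Your proposal is correct and, modulo taking complements, is the same argument as the paper's: the paper lower-bounds $\sum_{v\in\overline{S}}x_v^2$ via $\bigcup_i(U_i\cap V_i)$, while you upper-bound $\sum_{v\in S}x_v^2$ via $R$ and $\bigcup_i(U_i\setminus V_i)$, but both rest on the same two ingredients — Claim~\ref{claim-bound-square} to pass between eigenvector mass and vertex counts, and the edit-distance estimate $|U_i\setminus V_i|\cdot\frac{1}{3}\min_{j\neq i}|U_j|\le\varepsilon m$ combined with $e(H)\le\binom{r}{2}|U_i|\min_{j\neq i}|U_j|$ (using regularity of $H$ when $r\geq3$). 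Your observation about pairing $|U_i|$ with the opposite part for $r=2$ is exactly what makes that last inequality tight without a regularity assumption in the bipartite case.
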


    \begin{poc}
    The observation is that the majority of each $U_i$ lies in $V_i$. For every vertex $w\in U_i \backslash V_i$, since $w\notin V_i$, we obtain that $w$ has less than $\frac{2}{3} |U_j|$ neighbors in $U_j$ for some $j\neq i$. 
    In the graph $G$, the vertex $w$ misses at least $\frac{1}{3}|U_j|$ neighbors in $U_j$. 
    To transfer $G$ to $H=K_{U_1,\ldots ,U_r}$, we must add these missing edges on $w$. 
    Since $G$ differs from $H$ in at most $\varepsilon m$ edges,  we get 
    $$|{U_i \backslash V_i}| \cdot  \frac{1}{3} \min_{j \neq i} |U_j| \leq \varepsilon m \le \frac{\varepsilon}{1-\varepsilon}e(H) \le  \frac{\varepsilon}{1-\varepsilon} {r \choose 2} |U_i| \cdot \min_{j\neq i} |U_j| 
    \leq \varepsilon r^2 |{U_i}| \cdot \min_{j \neq i} |{U_j}|,$$
    where the second inequality holds by $e(H)\ge (1- \varepsilon) m$, and the third holds because $H$ is regular when $r\ge 3$. 
    Hence, for every $i\in [r]$, we have  
    $|{U_i \backslash V_i}| \leq \varepsilon r^2 |{U_i}|$, 
    which gives 
    \[ |U_i\cap V_i| = |U_i| - |U_i\setminus V_i| \ge (1- \varepsilon r^2)|U_i|. \]
    Applying Claim \ref{claim-bound-square}, we obtain
    \begin{align*}
    \sum_{v \in \overline{S}} x_v^2 &\geq \sum_{i = 1}^r \sum_{v \in U_i \cap V_i} x_v^2 
    \geq \sum_{i = 1}^r \frac{|{U_i \cap V_i}|}{r |{U_i}|} - 9\varepsilon^{1/4} 
    \geq (1 - \varepsilon r^2) - 9\varepsilon^{1/4} \geq 1 - \frac{1}{16r^2},
    \end{align*}
    as needed. 
    \end{poc}

    We now have all the ingredients to bound the spectral radius of $G$. 
    Let $G':=G[V_1,\ldots ,V_r]$ be the induced subgraph of $G$ with vertex sets $V_1 \sqcup \ldots  \sqcup V_r$. Let $m'$ be the number of edges in $G'$. 
    By Claim \ref{claim:4-4}, we know that there is no edge of $G$ within $V_i$ for every $i\in [r]$. So all edges of $G$ are contained in $G'$ or incident to vertices of $S$. 
    Then 
    \begin{equation} \label{upper-two-terms}
        \lambda(G) = \bm{x}^T A_G \bm{x} \leq \sum_{u v \in E(G')} 2 x_u x_v + 
    \sum_{v \in S} \sum_{w \in N(v)}  x_v\cdot  \beta_w x_w, 
    \end{equation}
   where the function $\beta : V(G) \to \mathbb{R}$ is defined as  
    \[  \beta_w:= \begin{cases}
        1, & \text{if $w\in S$;} \\
        2, & \text{if $w\notin S$.}
    \end{cases}   \] 
   We define the following shorthand notation for simplicity: 
    $$A := \left(1- \frac{1}{r}\right) \cdot 2m', \quad 
    B :=\left(1- \frac{1}{r}\right) \cdot 2(m -m'), 
    \quad 
    X := \sum_{v \in \bar{S}} x_v^2.$$
To bound the terms in the right-hand side of (\ref{upper-two-terms}), we prove the following two claims.

    \begin{claim} \label{upper-bound-1}
        We have $\sum\limits_{u v \in E(G')} 2 x_u x_v \leq \sqrt{A}X$. 
    \end{claim}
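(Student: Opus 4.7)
The plan is to recognize the left-hand side as a Rayleigh quotient on the induced subgraph $G'$ and then apply Nikiforov's inequality \eqref{eq-Niki-2002-cpc} to bound $\lambda(G')$. Observe first that by Claim \ref{claim:4-4}, the sets $V_1, \ldots, V_r$ are pairwise disjoint and each is independent in $G$. Therefore $G' = G[V_1, \ldots, V_r]$ is a genuine $r$-partite graph on the vertex set $\bar{S} = V_1 \sqcup \cdots \sqcup V_r$, and in particular $G'$ is $K_{r+1}$-free with exactly $m'$ edges.

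Let $\bm{x}'$ denote the restriction of the Perron--Frobenius eigenvector $\bm{x}$ of $G$ to the set $\bar{S}$. The sum in question rewrites as a quadratic form
$$\sum_{uv \in E(G')} 2 x_u x_v = (\bm{x}')^T A_{G'} \bm{x}'.$$
Since $A_{G'}$ is a real symmetric matrix whose largest eigenvalue is $\lambda(G')$, the Rayleigh principle gives
$(\bm{x}')^T A_{G'} \bm{x}' \le \lambda(G') \cdot \|\bm{x}'\|_2^2 = \lambda(G') \cdot X$, where the last equality uses the definition $X = \sum_{v \in \bar S} x_v^2 = \|\bm{x}'\|_2^2$.

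It remains to bound $\lambda(G')$. Since $G'$ is $K_{r+1}$-free with $m'$ edges, Nikiforov's inequality \eqref{eq-Niki-2002-cpc} yields
$$\lambda^2(G') \le \left(1 - \frac{1}{r}\right) \cdot 2m' = A,$$
so $\lambda(G') \le \sqrt{A}$. Combining the two estimates, we obtain $\sum_{uv \in E(G')} 2x_u x_v \le \sqrt{A}\, X$, as claimed. There is no real obstacle here; the argument is a clean one-liner once one notices that $G'$ inherits $K_{r+1}$-freeness from being $r$-partite, and the entire content of the claim is packaging Nikiforov's edge-spectral bound together with the Rayleigh quotient applied to the restricted eigenvector.
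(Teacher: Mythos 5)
Your proof is correct and matches the paper's argument essentially verbatim: both recognize $G'$ as an $r$-partite (hence $K_{r+1}$-free) subgraph via Claim \ref{claim:4-4}, apply Nikiforov's bound \eqref{eq-Niki-2002-cpc} to get $\lambda(G') \le \sqrt{A}$, and conclude with the Rayleigh quotient on the restriction of $\bm{x}$ to $\bar{S}$.
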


    \begin{poc}
    By Claim \ref{claim:4-4}, we know that $G'$ is an $r$-partite  subgraph with $m'$ edges.  Applying Nikiforov's bound (\ref{eq-Niki-2002-cpc}) to the $K_{r+1}$-free subgraph $G'$, we get that $\lambda (G')\le \sqrt{A}$. 
    Recall that $V(G')=V_1\cup \cdots \cup V_r=\overline{S}$. 
    The Rayleigh formula implies 
    $$\sum_{u v \in E(G')} 2 x_u x_v \leq \lambda (G')\cdot X \le  \sqrt{A}X.$$
    \end{poc}

\begin{claim} \label{upper-bound-2}
    We have $\sum\limits_{v \in S} x_v 
    \sum\limits_{w \in N(v)} \beta_w x_w \le \sqrt{(1-X)\cdot 2BX}$.
\end{claim}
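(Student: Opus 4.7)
The plan is to estimate the target sum by two successive applications of Cauchy--Schwarz, using the spectral near-equalities of Claims~\ref{claim:4-5} and~\ref{claim:4-6} to recover the sharp factor $\sqrt{X(1-X)}$ on the right-hand side.

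First, I would fix $v \in S$ and apply Cauchy--Schwarz to the inner sum. Since $\beta_w$ takes the value $1$ on $S$ and $2$ on $\overline{S}$,
\[ \Big(\sum_{w \in N(v)} \beta_w x_w\Big)^2 \le \Big(\sum_{w \in N(v)} \beta_w^2\Big) \sum_{w \in N(v)} x_w^2 = \bigl(|N(v) \cap S| + 4|N(v) \cap \overline{S}|\bigr)\sum_{w \in N(v)} x_w^2, \]
and Claim~\ref{claim:4-5} bounds the last factor by $c_r := 1 - \tfrac{4}{3r} + \tfrac{1}{4r^2}$.

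The pivotal step is to upgrade this constant bound into $(1 - \tfrac{1}{r})X$, which is precisely the shape needed on the right-hand side of the target inequality. Claim~\ref{claim:4-6} gives $X \ge 1 - \tfrac{1}{16r^2}$, and a direct computation yields
\[ (1 - \tfrac{1}{r})X - c_r \ge \tfrac{1}{3r} - \tfrac{5}{16r^2} + \tfrac{1}{16r^3}, \]
which is non-negative for every $r \ge 2$ since $\tfrac{1}{3r} \ge \tfrac{5}{16r^2}$. This substitution is the main technical obstacle of the proof: it is the only place where the precise quantitative forms of Claims~\ref{claim:4-5} and~\ref{claim:4-6} must interact, and it is what produces the factor $X$ needed on the right-hand side.

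After the substitution, $\sum_{w \in N(v)} \beta_w x_w \le \sqrt{(1-\tfrac{1}{r}) X} \cdot \sqrt{|N(v) \cap S| + 4|N(v) \cap \overline{S}|}$. I would then multiply by $x_v$, sum over $v \in S$, and apply Cauchy--Schwarz once more, using $\sum_{v \in S} x_v^2 = 1 - X$ together with the elementary edge counts $\sum_{v \in S}|N(v) \cap S| = 2 e(S)$ and $\sum_{v \in S}|N(v) \cap \overline{S}| = e(S, \overline{S})$, to obtain
\[ \sum_{v \in S} x_v \sum_{w \in N(v)} \beta_w x_w \le \sqrt{(1 - \tfrac{1}{r}) X(1-X)} \cdot \sqrt{2 e(S) + 4 e(S, \overline{S})}. \]
Since $e(S) + e(S, \overline{S}) = m - m'$, the last radical is bounded by $\sqrt{4(m-m')}$, and combining gives the target bound $\sqrt{2BX(1-X)}$, as desired.
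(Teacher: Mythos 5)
Your proof is correct and follows essentially the same route as the paper: the outer Cauchy--Schwarz produces the $(1-X)^{1/2}$ factor, the inner Cauchy--Schwarz isolates $\sum_{w\in N(v)}x_w^2$, and the key observation $1-\tfrac{4}{3r}+\tfrac{1}{4r^2}\le(1-\tfrac{1}{r})(1-\tfrac{1}{16r^2})\le(1-\tfrac{1}{r})X$ from Claims~\ref{claim:4-5} and~\ref{claim:4-6} upgrades the constant to $(1-\tfrac{1}{r})X$, exactly as in the paper. The only difference is cosmetic: the paper inserts an auxiliary weight $\gamma_w$ into the inner Cauchy--Schwarz so that $\sum_{v\in S}\sum_{w\in N(v)}\gamma_w^{-1}$ equals $m-m'$ exactly, whereas you apply the unweighted Cauchy--Schwarz, obtain $2e(S)+4e(S,\overline{S})$, and discard the slack with $2e(S)+4e(S,\overline{S})\le 4(m-m')$ at the end; both yield the identical final bound $\sqrt{2BX(1-X)}$.
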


    \begin{poc}
    By the Cauchy--Schwarz inequality, we obtain 
    \begin{align*}
    \sum_{v \in S} x_v \sum_{w \in N(v)} \beta_w x_w &\leq \left(\sum_{v \in S} x_v^2\right)^{\!\!1/2} \left(\sum_{v \in S} \left(\sum_{w \in N(v)} \beta_w x_w\right)^{\!\!2}\right)^{\!\!1/2}  \!\!\!\! \\
    &= (1 - X)^{1/2} \left(\sum_{v \in S} \left(\sum_{w \in N(v)} \beta_w x_w\right)^{\!\!2}\right)^{\!\!1/2}.
    \end{align*}
    To bound the right-hand side, we define the function $ \gamma: V(G)\to \mathbb{R}_+$ as below
     \[ \gamma_w:= \begin{cases}
        2, & \text{if $w\in S$;} \\
        1, & \text{if $w\notin S$.}
    \end{cases}   \] 
    By the Cauchy--Schwarz inequality again, for each $v \in S$, we have
    $$\left(\sum_{w \in N(v)} \beta_w x_w\right)^2 \leq 
    \left(\sum_{w \in N (v)} \beta_w^2 \gamma_w x_w^2\right) \left(\sum_{w \in N(v)}\frac{1}{\gamma_w}\right).$$
   Note that $\beta_w^2 \gamma_w \le 4$ for every $w\in V(G)$. By Claims \ref{claim:4-5} and \ref{claim:4-6}, 
    it follows that 
    $$\sum_{w \in N(v)} \beta_w^2\gamma_w x_w^2  
    \leq 4\left(1 - \frac{4}{3r} + \frac{1}{4r^2}\right) \le 4 \left(1 - \frac{1}{r}\right) 
    \left(1 - \frac{1}{16r^2}\right) \le 4 \left(1 - \frac{1}{r}\right) X.$$
    Now observe the identity
    $$\sum_{v \in S}\sum_{w \in N(v)}\frac{1}{\gamma_w} = 
    e(S,\bar{S}) + e(S) = m - m'.$$
   Thus, we conclude that 
    $$\sum_{v \in S} \left(\sum_{w \in N(v)} \beta_w x_w\right)^2 \leq 4 \left(1 - \frac{1}{r}\right) X \cdot (m - m') = 2BX.$$ 
    \end{poc}

    Substituting Claims \ref{upper-bound-1} and \ref{upper-bound-2} back to (\ref{upper-two-terms}), we obtain
    $$\lambda(G) \leq \sqrt{A}X + \sqrt{(1 - X)\cdot 2BX}.$$
    We use the Cauchy--Schwarz inequality one last time to get
    \begin{align*}
    \sqrt{A}X + \sqrt{B \cdot 2X(1 - X)} \leq 
    \sqrt{A + B} \sqrt{X^2 + 2X(1 - X)} \leq \sqrt{A + B} = \sqrt{\left( 1-\frac{1}{r}\right)2m}, 
    \end{align*}
    which is the result we want. 
Moreover, if $\lambda (G)=\sqrt{(1-\nicefrac{1}{r})2m}$, then the equality in this last step to hold, we must have $X = 1$. Therefore, all vertices in $S$ are isolated, so $G = G[V_1, \dots, V_r]$. By Claim \ref{claim:4-4}, this implies that $G$ is an $r$-partite graph. Since $G$ is $K_{r+1}$-free and $\lambda(G) = \sqrt{(1 - \nicefrac{1}{r})2m}$, we know from Nikiforov's result that $G$ must be a complete bipartite graph when $r = 2$, or uniquely a regular complete $r$-partite graph when $r \geq 3$.
\end{proof}

\subsection{Proof of Theorem \ref{thm:color-critical-I}}

With the help of Theorem \ref{thm:color-critical-when-stable}, it is straightforward to derive Theorem \ref{thm:color-critical-I}.

\begin{proof}[{\bf Proof of Theorem \ref{thm:color-critical-I}}]
Since any color-critical graph $F$ with $\chi (F)=r+1$ is a subgraph of $K_{r}^+[t]$ for some integer $t\ge 2$, it suffices to prove the theorem for $F = K_{r}^+[t]$ with $r\ge 3$ and $t\ge 2$. 
Let $G$ be a $K_{r}^+[t]$-free graph with $m$ edges and  
$$\lambda^2(G) \geq \left(1 - \frac{1}{r}\right) \cdot 2m.$$
By the edge-spectral stability Theorem \ref{thm:nikiforov-stability}, for any $\varepsilon > 0$ and sufficiently large $m$, there exists a vertex set $C \subseteq V(G)$ such that $d(G, T_{C, r}) \leq \varepsilon m$. 
For convenience, we denote $|C|=n$. In other words, we can find disjoint vertex sets $V_1, \dots, V_r$ inside $G$ with $\lfloor \frac{n}{r} \rfloor \le |V_i| \leq \lceil \frac{n}{r} \rceil$ such that $G$ differs from $K_{V_1, \dots, V_r}$ in at most $\varepsilon m$ edges. We observe some mundane claims. 

\begin{claim} \label{claim-bound-m}
    We have $(1 - \varepsilon) \left(1-\frac{1}{r} \right) \frac{n^2}{2} \le m \le (1 + 2\varepsilon) \left(1-\frac{1}{r} \right) \frac{n^2}{2}$. 
\end{claim}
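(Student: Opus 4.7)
The plan is to derive both bounds directly from the edit-distance hypothesis $d(G, T_{C,r}) \le \varepsilon m$ together with the elementary estimates on $e(T_{C,r})$ supplied by Lemma \ref{lem:e-T}; the claim is essentially bookkeeping once one recalls that differing in at most $\varepsilon m$ edges forces $|e(G) - e(T_{C,r})| \le \varepsilon m$.

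For the upper bound I would start from $m = e(G) \le e(T_{C,r}) + \varepsilon m$, apply the upper estimate $e(T_{C,r}) \le \bigl(1 - \tfrac{1}{r}\bigr)\tfrac{n^2}{2}$ of Lemma \ref{lem:e-T}, and rearrange to get $(1-\varepsilon)m \le \bigl(1-\tfrac{1}{r}\bigr)\tfrac{n^2}{2}$. Using $\tfrac{1}{1-\varepsilon} \le 1 + 2\varepsilon$ valid for $\varepsilon \le \tfrac12$ then yields the desired bound $m \le (1+2\varepsilon)\bigl(1-\tfrac{1}{r}\bigr)\tfrac{n^2}{2}$.

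For the lower bound I would symmetrically write $m \ge e(T_{C,r}) - \varepsilon m$ and combine with the lower estimate $e(T_{C,r}) \ge \bigl(1 - \tfrac{1}{r}\bigr)\tfrac{n^2}{2} - \tfrac{r}{8}$. Since $r$ is fixed while $m$, and hence $n$ (via the upper bound just proved, which gives $n^2 \gtrsim m$), may be taken arbitrarily large, the additive constant $\tfrac{r}{8}$ is absorbed into a factor $(1 - \varepsilon')\bigl(1-\tfrac{1}{r}\bigr)\tfrac{n^2}{2}$ for an arbitrarily small $\varepsilon'$. Rearranging $(1+\varepsilon)m \ge (1-\varepsilon')\bigl(1-\tfrac{1}{r}\bigr)\tfrac{n^2}{2}$ and choosing the stability parameter $\varepsilon$ small enough at the outset (say replacing $\varepsilon$ by $\varepsilon/3$ in the application of Theorem \ref{thm:nikiforov-stability}) produces the bound $m \ge (1-\varepsilon)\bigl(1-\tfrac{1}{r}\bigr)\tfrac{n^2}{2}$.

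No serious obstacle is expected: the only delicate point is confirming that the constant term $\tfrac{r}{8}$ in Lemma \ref{lem:e-T} is genuinely negligible, which is where the hypothesis ``$m$ sufficiently large'' (uniform in the already-fixed $r$ and $\varepsilon$) is invoked. The spectral hypothesis $\lambda^2(G) \ge \bigl(1-\tfrac{1}{r}\bigr) 2m$ is not used in this claim; it was only needed to deploy Theorem \ref{thm:nikiforov-stability} and thereby produce the set $C$ and the edit-distance bound in the first place.
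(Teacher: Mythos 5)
Your proof is correct and essentially the same as the paper's: use the edit-distance bound to sandwich $m$ between $(1\pm\varepsilon)^{-1}e(T_{C,r})$, estimate $e(T_{C,r})$ via Lemma~\ref{lem:e-T}, and absorb the additive error $\tfrac{r}{8}$ using the fact that $n = \Omega_{r,\varepsilon}(\sqrt m)$ so that $m$ being large makes this term negligible. The paper handles the lower bound a touch more directly — it shows the absorption condition is exactly $\varepsilon^2(1-\tfrac1r)\tfrac{n^2}{2} > \tfrac{r}{8}$, with no need to rescale the stability parameter — so your ``replace $\varepsilon$ by $\varepsilon/3$'' step is superfluous, though it does no harm.
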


\begin{poc}
   Since $H$ can be obtained from $G$ by changing (adding or deleting) at most $\varepsilon m$ edges, we have $(1-\varepsilon) m\le e(T_{C,r})\le (1+\varepsilon)m$. By Lemma \ref{lem:e-T}, we know that 
    \[  \left(1- \frac{1}{r} \right)\frac{n^2}{2} - \frac{r}{8} \le 
    e(T_{C,r})\le \left(1- \frac{1}{r} \right)\frac{n^2}{2} .  \] 
    Since $0< \varepsilon <0.01$, we have $\frac{1}{1- \varepsilon} < 1+ 2\varepsilon$ and  
    $m\le \frac{1}{1-\varepsilon} e(T_{C,r}) \le (1+2\varepsilon) \left(1-\frac{1}{r} \right) \frac{n^2}{2}$.
    On the other hand, we get 
    $ m\ge \frac{1}{1+\varepsilon}e(T_{C,r}) \ge (1 - \varepsilon) \left(1-\frac{1}{r} \right) \frac{n^2}{2}$,  
    where the last inequality is guaranteed by $\varepsilon^2 (1-\frac{1}{r}) \frac{n^2}{2} > \frac{r}{8}$. This inequality holds for sufficiently large $m$ since $n=\Omega_{r,\varepsilon}(\sqrt{m})$. 
\end{poc}

\begin{claim} \label{claim-subgraph-H}
    For every $i\in [r]$, 
let $U_i$ be a subset of $V_i$ with size $|U_i|  
= \min\limits_{k\in [r]} |V_k|.$ Then 
\[  d(K_{V_1,\ldots ,V_r}, K_{U_1,\ldots ,U_r}) 
\le 3r \sqrt{m}. \]
\end{claim}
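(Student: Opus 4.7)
The plan is to observe that $K_{U_1,\ldots,U_r}$ sits inside $K_{V_1,\ldots,V_r}$ as an induced subgraph on the vertex set $U_1 \sqcup \cdots \sqcup U_r$, so viewing both as graphs on $V_1 \sqcup \cdots \sqcup V_r$, their symmetric difference consists precisely of the edges of $K_{V_1,\ldots,V_r}$ incident to the ``excess'' vertices in $W := \bigsqcup_{i}(V_i \setminus U_i)$. Since the $V_i$'s have sizes that differ by at most one (they come from a Tur\'an partition on $n$ vertices), and $|U_i| = \min_k |V_k|$, we have $|V_i \setminus U_i| \in \{0,1\}$ for every $i$, and hence $|W|\le r$.

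Next, I would bound the number of $K_{V_1,\ldots,V_r}$-edges incident to $W$. Every vertex of $W$ has degree at most $n - 1$ in $K_{V_1,\ldots,V_r}$, so a crude union bound gives at most $|W|\cdot n \le rn$ such edges (the potential double-counting of edges inside $W$ costs only ${r \choose 2}$, which will be absorbed).

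Finally, I would use Claim~\ref{claim-bound-m} to convert $n$ into a bound in terms of $m$. That claim gives $(1-\varepsilon)(1 - \tfrac{1}{r})\tfrac{n^2}{2} \le m$, so for $r \ge 3$ and sufficiently small $\varepsilon$ we obtain $n \le \sqrt{{2m}/{((1-\varepsilon)(1-\tfrac{1}{r}))}} \le 2\sqrt{m}$. Combining with the previous step yields
\[
d(K_{V_1,\ldots,V_r},K_{U_1,\ldots,U_r}) \le rn + \binom{r}{2} \le 2r\sqrt{m} + \binom{r}{2} \le 3r\sqrt{m}
\]
for all sufficiently large $m$. The argument is essentially a one-line counting estimate once the size imbalance $|V_i|-|U_i|\le 1$ is recorded; no obstacle is anticipated, and the only mild care needed is in packaging the lower-order term $\binom{r}{2}$ into the constant $3r$ via the ``sufficiently large $m$'' clause already present in the hypotheses.
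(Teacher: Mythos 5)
Your proof is correct and takes essentially the same route as the paper's: observe that the Tur\'an partition $V_1,\dots,V_r$ has part sizes differing by at most one, so passing to $U_i$ removes at most $r$ vertices (the paper counts $s<r$), each incident to fewer than $n$ edges of $K_{V_1,\dots,V_r}$, and then bound $n=O(\sqrt{m})$ via Claim~\ref{claim-bound-m}. One minor remark: your aside about double-counting edges inside $W$ is backwards (summing degrees over $W$ already overcounts such edges, so the true count is smaller than $|W|\,n$, not larger), so the $\binom{r}{2}$ correction is unnecessary — but since you add it rather than subtract it, the final estimate is still valid.
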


\begin{poc}
    Write $n=r \cdot \lfloor \frac{n}{r}\rfloor +s$, where $0\le s<r$. The Tur\'{a}n graph $K_{V_1,\ldots ,V_r}$ has $s$ partite sets of size $\lfloor \frac{n}{r}\rfloor +1$, and $r-s$ partite sets of size $\lfloor \frac{n}{r}\rfloor$. 
    To transfer $K_{V_1,\ldots ,V_r}$ to $K_{U_1,\ldots ,U_r}$, it suffices to delete exactly one vertex from each vertex set of size $\lfloor \frac{n}{r}\rfloor +1$. 
    We remove less than $sn $ edges for our purpose. By Claim \ref{claim-bound-m}, we know that $n < 3\sqrt{m}$ and $sn < 3r \sqrt{m} $, as needed.   
\end{poc}

We denote $H := K_{U_1, \dots, U_r}$. Then $H$ is regular. 
 By Claim \ref{claim-subgraph-H}, we have 
\[ d(G, H) \leq d(G, K_{V_1, \dots, V_r}) + d(K_{V_1, \dots, V_r}, H) \leq 2\varepsilon m. \] 
Now, we are going to apply Theorem \ref{thm:color-critical-when-stable}. Let $\varepsilon >0$ be sufficiently small. 
For sufficiently large $m$, we have $d(G, H) \leq \varepsilon_{t,r} m$ and $\abs{V_i'} \geq n_{t,r}$. By Theorem \ref{thm:color-critical-when-stable}, we conclude that 
$$\lambda^2 (G)\le \left(1- \frac{1}{r}\right) \cdot 2m,$$
 where the equality holds if and only if $G$ is a regular complete $r$-partite graph.
\end{proof}

\section{Almost-bipartite graphs}
\label{sec:almost}

To prove Theorems \ref{thm:bipartite-rough} and \ref{thm-determine-M}, we need to use the following two lemmas whose proofs are straightforward, and we leave them for interested readers. Recall that $\mathcal{A}_F$ is the family of subgraphs induced by $\overline{I}=V(F)\setminus I$, where $I$ runs over all maximal independent sets of $F$. 
The family $\mathcal{M}_F$ consists of all $\mathcal{A}_F$-free graphs. 

  \begin{lemma} \label{fact:less-vertices}
     For an almost-bipartite graph $F$, 
the graph in $\mathcal{M}_F$ is either empty, or a non-empty graph with order less than $|F|$. There are finitely many non-empty graphs in $\mathcal{M}_F$. 
 \end{lemma}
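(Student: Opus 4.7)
The plan is to locate a distinguished member $H$ of $\mathcal{A}_F$ with at most one edge, then exploit its simple structure to bound the order of every non-empty graph in $\mathcal{M}_F$.

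First, I would produce a maximal independent set $I$ of $F$ such that $F[\overline{I}]$ contains at most one edge. If $F$ is bipartite with bipartition $V(F) = I_1 \sqcup I_2$, then $I_2$ is independent in $F$, and extending it to a maximal independent set $I$ yields $\overline{I} \subseteq I_1$, so $F[\overline{I}]$ is edgeless. If instead $F$ is color-critical with $\chi(F) = 3$, let $e = uv$ be an edge whose deletion makes $F$ bipartite, and let $I_1 \sqcup I_2$ be the resulting bipartition of $F - e$ with $u, v \in I_1$. Then $I_2$ remains independent in $F$; extending it to a maximal independent set $I$ of $F$ gives $\overline{I} \subseteq I_1$, and since $F[I_1]$ consists only of the edge $e$, the induced subgraph $F[\overline{I}]$ has at most one edge.

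Set $H := F[\overline{I}]$ and $h := |V(H)| \le |F|$. In both cases, $H$ is isomorphic to either $hK_1$ or $K_2 \cup (h-2)K_1$. Now suppose $M \in \mathcal{M}_F$ is non-empty, so $M$ has at least one edge. If $H = hK_1$, then any graph on at least $h$ vertices contains $H$ as a subgraph, forcing $|V(M)| < h$. If $H = K_2 \cup (h-2)K_1$, then any graph on at least $h$ vertices with at least one edge contains $H$ as a subgraph, so again $|V(M)| < h$. Therefore every non-empty $M \in \mathcal{M}_F$ satisfies $|V(M)| < h \le |F|$, and the finiteness statement is immediate since there are only finitely many isomorphism classes of graphs on fewer than $|F|$ vertices.

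I do not expect any real obstacle here; the one subtle point is verifying in the color-critical case that $F[I_1]$ consists solely of the edge $e$, which is automatic because $I_1$ is independent in $F - e$.
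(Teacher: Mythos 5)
Your proof is correct. The paper explicitly leaves Lemma~\ref{fact:less-vertices} to the reader as a straightforward exercise, so there is no in-paper proof to compare against; your argument is the natural one the authors have in mind (and the surrounding text after Example~\ref{exam-chall}, which notes that $F[\bar I]$ contains at most one edge for a maximum independent set $I$, points in exactly this direction). Two small remarks: you could observe that the distinguished $H$ you build always has $h = |\bar I| < |F|$ strictly, since $I$ is a nonempty maximal independent set, giving a slightly cleaner chain $|V(M)| < h < |F|$; and in the color-critical case it is worth a sentence justifying that some bipartition of $F-e$ places $u,v$ on the same side --- which you essentially do by noting that otherwise the bipartition of $F-e$ would already properly $2$-color $F$, contradicting $\chi(F)=3$.
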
 

\begin{lemma}
\label{lem:bipartite-free}
Suppose that $F$ is an almost-bipartite graph, $b$ is an integer with $b > \abs{F}$ and $H$ is a graph. Then $H \lor bK_1$ is $F$-free if and only if $H \in \mathcal{M}_F$.
\end{lemma}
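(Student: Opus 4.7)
The plan is to argue both directions of the equivalence by direct embedding--extraction arguments, using the two defining features of $H \vee bK_1$: the $bK_1$ part is independent in $H \vee bK_1$, while every vertex of it is adjacent to every vertex of $H$.

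For the ``if'' direction, I would suppose $H \in \mathcal{M}_F$ and assume toward contradiction that $H \vee bK_1$ contains a copy of $F$. Partition $V(F)$ as $J \sqcup K$, where $J$ consists of the $F$-vertices mapped into the $bK_1$ part and $K$ those mapped into $H$. Since the $bK_1$ part is independent in $H \vee bK_1$, the set $J$ is independent in $F$, so I extend it to a maximal independent set $I$ of $F$. Then $\overline{I} = V(F) \setminus I \subseteq K$, and every edge of $F[\overline{I}]$ has both endpoints mapped into $H$, so its image is an edge of $H$. Hence $H$ contains a subgraph isomorphic to $F[\overline{I}] \in \mathcal{A}_F$, contradicting $H \in \mathcal{M}_F$.

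For the converse, suppose $H \notin \mathcal{M}_F$, so that $H$ contains a copy of some $F[\overline{I}] \in \mathcal{A}_F$ for a maximal independent set $I$ of $F$. Since $|I| \leq |F| < b$, I can select $|I|$ distinct vertices of the $bK_1$ part and map the vertices of $I$ to them injectively. Every edge of $F$ incident to $I$ must have its other endpoint in $\overline{I}$ (because $I$ is independent), and each such edge is present in $H \vee bK_1$ since every vertex of $bK_1$ is adjacent to every vertex of $H$. Combined with the copy of $F[\overline{I}]$ already embedded in $H$, this produces a copy of $F$ in $H \vee bK_1$, contradicting its $F$-freeness.

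I do not expect any real obstacle; the argument is essentially a bookkeeping exercise in the definitions. The two places where the hypotheses enter are (i) the maximality of $I$ in the definition of $\mathcal{A}_F$, which is used in the ``if'' direction to conclude that $F[\overline{I}] \in \mathcal{A}_F$, and (ii) the assumption $b > |F|$ in the converse, which guarantees enough vertices in $bK_1$ to accommodate the image of $I$. Notably, the almost-bipartiteness of $F$ plays no role here; it is only needed downstream, where the structural content of $\mathcal{M}_F$ recorded in Lemma \ref{fact:less-vertices} is exploited.
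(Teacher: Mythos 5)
Your proof is correct. The paper explicitly omits the proofs of Lemmas \ref{fact:less-vertices} and \ref{lem:bipartite-free}, noting only that they are ``straightforward,'' so there is no reference argument to compare against; your argument is the natural one. Both directions are handled cleanly: for ``if,'' extending the preimage $J$ of the $bK_1$-part to a maximal independent set $I$ is exactly the right move to land in $\mathcal{A}_F$, and for the converse, the bound $b > |F| \geq |I|$ is used precisely where needed. Your side remark that almost-bipartiteness of $F$ is never invoked is accurate --- the hypothesis is carried in the lemma statement only because the surrounding section restricts to that class, not because the equivalence requires it.
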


\subsection{Proof of Theorem \ref{thm:bipartite-rough}}

Let $G$ be an $m$-edge graph with maximum spectral radius over all $m$-edge $F$-free graphs. 
 Since $F$ is not a star, $K_{1, m}$ is clearly $F$-free. So we have 
 \begin{equation}
     \label{eq-lower-sqrt-m}
     \lambda(G) \geq \lambda(K_{1, m}) = \sqrt{m}. 
 \end{equation}
Without loss of generality, suppose that $G$ has no isolated vertices. 
Assume that $G$ is not a complete bipartite graph. 
Our goal is to prove that $G$ satisfies 
the part (b) in Theorem \ref{thm:bipartite-rough}.

Throughout the proof, we denote $t:=|F|$.

\begin{claim}
    \label{cl:bound-A-B}
    There exists a constant $\varepsilon \in (0, (16)^{-4}t^{-1})$ such that for sufficiently large $m$, we can find disjoint vertex sets $A, B$ in $G$ such that $d(G, K_{A, B}) \leq \varepsilon m$, $|{A}| \leq 16t$ 
    and $G[A] \in \mathcal{M}_F$.
\end{claim}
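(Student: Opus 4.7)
The plan is to apply the edge-spectral stability theorem to obtain a rough complete-bipartite structure in $G$, then progressively refine it using the $F$-free hypothesis.

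First, since $\lambda(G) \geq \sqrt m$ by \eqref{eq-lower-sqrt-m} and $G$ is $F$-free, the hypothesis $\lambda^2(G) \geq m = (1-\tfrac{1}{2})\cdot 2m$ holds. I invoke Theorem \ref{thm:nikiforov-stability} with $r=2$, which applies directly for color-critical $F$ with $\chi(F)=3$, and extends to bipartite $F$ not a star via the same edge-spectral framework developed by the authors. This yields disjoint vertex sets $A_0, B_0 \subseteq V(G)$ with $d(G, K_{A_0,B_0}) \leq \varepsilon_0 m$ for some $\varepsilon_0 \ll \varepsilon$. I may assume $|A_0| \leq |B_0|$, and further choose $A_0$ to be minimum subject to the stability conclusion, thereby capturing the ``book-like'' structure that an extremal $G$ should have.

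Second, I bound $|A_0| \leq 16t$ by contradiction. Suppose $|A_0| \geq 16t$. Since $e(K_{A_0,B_0}) \geq (1-\varepsilon_0)m$, for $\varepsilon_0$ small we have $e(K_{A_0,B_0}) - e(G[A_0,B_0]) \leq \varepsilon_0 m \leq \delta_{t,2}\,|A_0||B_0|$. Lemma \ref{lem:find-turan-graph} applied to $G[A_0,B_0]$ with parameter $t$ yields a copy of $K_{t,t} \subseteq G[A_0,B_0]$. For bipartite $F$ this already contains $F$ (since $|F|=t$ and $F$ is bipartite), contradicting $F$-freeness. For color-critical $F$ with $\chi(F)=3$, we have $F \subseteq K_{t,t}^+$ but not necessarily $F\subseteq K_{t,t}$, so I additionally need an edge inside one of the partite parts of the located $K_{t,t}$; the hypothesis that $G$ is not a complete bipartite graph (so some edge of $G$ lies outside the bipartite pattern $K_{A_0,B_0}$) combined with a re-selection argument on the $K_{t,t}$ copy produces the required extra edge, yielding $F \subseteq K_{t,t}^+ \subseteq G$.

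Third, I verify $G[A_0] \in \mathcal{M}_F$. Suppose that $G[A_0]$ contains a subgraph $M \cong F[\overline I]$ for some maximal independent set $I$ of $F$, realized on some $A' \subseteq A_0$ with $|A'| = |\overline I| \leq t$. A Markov-type bound on $d(G,K_{A_0,B_0}) \leq \varepsilon_0 m$ shows that at least half of the vertices of $A_0$ have at most $2\varepsilon_0 m/|A_0|$ non-neighbors in $B_0$; I can take $A'$ from this half. The common neighborhood $\bigcap_{a \in A'} N_{B_0}(a)$ then has size at least $|B_0| - 2t\varepsilon_0 m/|A_0| \geq |B_0|/2$, which greatly exceeds $|I|$. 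Embedding $I$ into this common neighborhood automatically realizes $F[I,\overline I]$ (since the intersection is complete to $A'$), yielding a copy of $F$ in $G$, a contradiction. Setting $A := A_0$ and $B := B_0$ completes the proof of the claim.

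The principal obstacle is the color-critical sub-case of the second step: upgrading a $K_{t,t}$-copy inside $G[A_0,B_0]$ to a $K_{t,t}^+$ by locating an extra edge inside one of the partite sides. This requires combining the extremality of $G$, in particular the hypothesis that $G$ is not a complete bipartite graph, with a Perron--Frobenius-style averaging argument on where that extra edge can live. A secondary technicality is that Theorem \ref{thm:nikiforov-stability} must be applied even when $F$ is bipartite, which requires appealing to an analogous stability conclusion from the broader edge-spectral framework of the authors.
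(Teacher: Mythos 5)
The principal gap in your proposal is the one you yourself flag as the ``principal obstacle'': bounding $|A| \leq 16t$ in the color-critical case. You propose finding a copy of $K_{t,t}$ in $G[A_0,B_0]$ via Lemma~\ref{lem:find-turan-graph} and then upgrading it to $K_{t,t}^+$ by a ``re-selection argument.'' This does not obviously work: the random sampling in Lemma~\ref{lem:find-turan-graph} produces an unlabelled $K_{t,t}$ inside the bipartite part, and the mere fact that $G$ is not complete bipartite supplies only \emph{some} edge off the bipartite pattern somewhere in $V(G)$ --- possibly incident to low-degree vertices, possibly outside $A_0\cup B_0$ altogether --- with no guarantee it can be incorporated into a $K_{t,t}$ copy of $G[A_0,B_0]$. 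The paper avoids all of this by invoking Theorem~\ref{thm:color-critical-when-stable} with $r=2$, $F=K_{t,t}^+$, and $H=K_{A,B}$: if $|A| \geq n_0 = 16t$, then both parts of $H$ are large (since $|B|\ge|A|$), so that theorem forces $\lambda(G)\le\sqrt m$ with equality only for complete bipartite $G$, contradicting $\lambda(G)\ge\sqrt m$ together with the standing assumption that $G$ is not complete bipartite. This is exactly where the threshold $16t = n_{t,2}$ comes from; your proposal never invokes Theorem~\ref{thm:color-critical-when-stable} and never accounts for that choice.

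Your secondary concern --- that Theorem~\ref{thm:nikiforov-stability} requires $\chi(F)\geq 3$ but $F$ may be bipartite --- has a one-line resolution that you miss: since $F$ is almost-bipartite with $|F|=t$, we have $F\subseteq K_{t,t}^+$, so $G$ is $K_{t,t}^+$-free and the stability theorem applies with forbidden graph $K_{t,t}^+$ (of chromatic number $3$, so $r=2$); no separate ``analogous stability conclusion'' is needed. Your third step, showing $G[A_0]\in\mathcal{M}_F$ via a common-neighborhood embedding, is correct but re-derives Lemma~\ref{lem:bipartite-free} from scratch; the paper simply notes that at least $|B|-\varepsilon m > |F|$ vertices of $B$ are complete to $A$, so $G[A]\vee tK_1\subseteq G$, and concludes via Lemma~\ref{lem:bipartite-free}. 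The extra ``choose $A_0$ minimum'' device you introduce is not needed and plays no role.
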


\begin{poc}
    Let $\varepsilon_0$ and $n_0=16t$ be the constants in Theorem \ref{thm:color-critical-when-stable} with $r := 2$ and $t$. We denote $\varepsilon := \min\{ \varepsilon_{0}, (4n_0)^{-1}, 10^{-8}t^{-4} \}$. By the edge-spectral stability Theorem \ref{thm:nikiforov-stability}, for sufficiently large $m$, there exist two disjoint vertex sets $A, B$ in $G$ such that $d(G, K_{A, B}) \leq \varepsilon m$. We emphasize that $\{A,B\}$ is not necessarily a partition of vertices of $G$. Without loss of generality, we may assume that $\abs{A} \leq \abs{B}$.

    Since $F$ is almost-bipartite, 
    we see that $F$ is a subgraph of $K_{t, t}^+$. Then $G$ is $K_{t, t}^+$-free. We claim that $|A|<n_0$. 
    Otherwise, if $\abs{A} \geq n_0$, then Theorem \ref{thm:color-critical-when-stable} implies $\lambda(G) \leq \sqrt{m}$. 
    Combining with (\ref{eq-lower-sqrt-m}), the equality holds, and $G$ is a complete bipartite graph. This contradicts the previous assumption that $G$ is not a complete bipartite graph. Therefore, we have $\abs{A} < n_0$.
 
    Since $G$ differs from $K_{A,B}$ in at most $\varepsilon m$ edges, there are at most $\varepsilon m$ edges of $K_{A, B}$ that are not in $G[A, B]$. So there are at most $\varepsilon m$ vertices of $B$ that are not complete to $A$. On the other hand, we have $e(K_{A,B})\ge (1- \varepsilon)m$, and then $|B| \geq (1 - \varepsilon) m/|A| \geq \frac{m}{2n_0}$. 
    Since $\varepsilon \leq (4n_0)^{-1}$, we see that $B$ has at least $|B| - \varepsilon m \ge \frac{m}{4n_0} > |{F}|$ vertices that are complete to $A$. This means that $G[A] \lor t K_1$ is a subgraph of $G$. By Lemma \ref{lem:bipartite-free}, we have $G[A] \in \mathcal{M}_F$. 
\end{poc} 

\noindent 
{\bf Remark.} 
In particular, for a bipartite graph $F$, there is another simple proof by using 
K\H{o}v\'{a}ri--S\'{o}s--Tur\'{a}n's theorem, instead of  Theorem \ref{thm:color-critical-when-stable}. Indeed, since 
$G[A,B]$ is $F$-free, 
we get 
\[ (1-\varepsilon)m \le e(A,B) \le 
2|A|^{1-1/t} |B|+ t|A| 
\le  2|A|^{-1/t}(1+\varepsilon)m + t\sqrt{m} . \]
For sufficiently large $m$, we have 
$|A|\le 4^{t}$ and $|B|\ge 
\frac{m}{2|A|} > m/4^{t+1}$. 

\medskip 

 Let $\bm{x}=(x_i)_{i\in V(G)}$ be the unit Perron--Frobenius eigenvector of $G$, and let $\lambda = \lambda(G)$.  
 From Claim \ref{cl:bound-A-B}, we know that $|A|=O(1)$ and $|B|=\Omega (m)$. Using Lemma \ref{lem:Perron-Frobenius-stability}, we next show that $x_a=\Omega(1)$ for every $a\in A$, and 
 $x_b=O(m^{-1/2})$ for every $b\in B$. 
 
\begin{claim} \label{cl:weight-a}
    For every vertex $a\in A$, we have $x_a > 10^{-1}t^{-1/2}$. 
\end{claim}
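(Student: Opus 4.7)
The plan is to apply the stability of the Perron--Frobenius eigenvector (Lemma \ref{lem:Perron-Frobenius-stability}) with $H := K_{A, B}$, viewed as a graph on $V(G)$ by adding isolated vertices, and compare against the Perron--Frobenius eigenvector of this complete bipartite graph.

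First, I will record the relevant spectral data of $H$. The complete bipartite graph $K_{A,B}$ has $\lambda_1(H) = \sqrt{|A|\,|B|}$, $\lambda_2(H) = 0$, and unit Perron--Frobenius eigenvector $\bm{y}$ given by $y_v = 1/\sqrt{2|A|}$ for $v \in A$, $y_v = 1/\sqrt{2|B|}$ for $v \in B$, and $y_v = 0$ elsewhere. From Claim \ref{cl:bound-A-B}, $|B| \geq (1-\varepsilon)m/|A|$, so $|A|\,|B| \geq (1-\varepsilon)m \geq m/2$ and hence $\lambda_1(H) - \lambda_2(H) \geq \sqrt{m/2}$.

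Next, since $d(G, H) \leq \varepsilon m$, inequality \eqref{lem-3-4-1} of Lemma \ref{lem:Perron-Frobenius-stability} gives
\[
\sum_{v \in V(G)} (x_v - y_v)^2 \leq \frac{8\sqrt{\varepsilon m}}{\sqrt{m/2}} = 8\sqrt{2\varepsilon}.
\]
In particular, for each $a \in A$ we have $|x_a - y_a| \leq (8\sqrt{2\varepsilon})^{1/2}$. Using $|A| \leq 16t$ from Claim \ref{cl:bound-A-B}, we get $y_a = 1/\sqrt{2|A|} \geq 1/(4\sqrt{2t})$. Combining the two estimates by the triangle inequality yields
\[
x_a \;\geq\; \frac{1}{4\sqrt{2t}} - \sqrt{8\sqrt{2\varepsilon}}.
\]

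The last step is to confirm that the choice $\varepsilon \leq 10^{-8} t^{-4}$ from Claim \ref{cl:bound-A-B} makes the error term negligible: $\sqrt{8\sqrt{2\varepsilon}} \leq \sqrt{8\sqrt{2}} \cdot 10^{-2} t^{-1}$, which for $t \geq 1$ is dominated by a small fraction of $1/(4\sqrt{2t})$, leaving $x_a > 10^{-1} t^{-1/2}$. The only real obstacle is verifying the cleaner constants, but since the gap $\lambda_1(H) - \lambda_2(H) = \Theta(\sqrt{m})$ is large and $\varepsilon$ was chosen polynomially small in $t^{-1}$, the estimate is comfortable and largely automatic once the spectral data of $H$ is written down.
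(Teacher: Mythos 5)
Your proof is correct and follows essentially the same route as the paper's: apply Lemma \ref{lem:Perron-Frobenius-stability} to $G$ and $H=K_{A,B}$ (with isolated vertices appended), use $\lambda_2(H)=0$ and $\lambda_1(H)=\sqrt{|A||B|}=\Theta(\sqrt{m})$, and compare $x_a$ against $y_a = 1/\sqrt{2|A|}\ge 1/\sqrt{32t}$ with the error term suppressed by the choice $\varepsilon\le 10^{-8}t^{-4}$. The only cosmetic differences are that you lower-bound the spectral gap by $\sqrt{m/2}$ where the paper uses $\sqrt{(1-\varepsilon)m}$, and your error constant $\sqrt{8\sqrt{2\varepsilon}}$ plays the role of the paper's $4\varepsilon^{1/4}$; both yield the stated $10^{-1}t^{-1/2}$ bound comfortably.
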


\begin{poc} 
Let $\bm{y}$ be the unit Perron--Frobenius eigenvector of $K_{A,B}$. Then $y_{a}=1/\sqrt{2|A|}$ for every $a\in A$, and $y_b=1/\sqrt{2|B|}$ for every $b\in B$. 
Observe that $\lambda (K_{A,B}) = \sqrt{e(K_{A,B})} \ge \sqrt{(1- \varepsilon)m}$. 
     Applying Lemma \ref{lem:Perron-Frobenius-stability} to $G $ and $H = K_{A, B}$ (possibly with some isolated vertices outside $H$), we have
    $$\left(x_a - \frac{1}{\sqrt{2\abs{A}}}\right)^2 \leq \frac{8 \sqrt{\varepsilon m}}{\sqrt{(1-\varepsilon)m}} \leq 16\sqrt{\varepsilon}.$$
    In particular, 
   Claim \ref{cl:bound-A-B} gives $|A| \leq 16t$. Since $\varepsilon \le 10^{-8} t^{-4}$, the above inequality yields 
    $$x_a \geq \frac{1}{\sqrt{2|A|}} - 4 \varepsilon^{1/4} >  \frac{1}{10\sqrt{t}}.$$ 
\end{poc}

With the help of Claim \ref{cl:weight-a}, we prove the following claim. 

\begin{claim}
\label{lem:small-part-2}
Let $\varepsilon, m, A, B$ be defined as in Claim \ref{cl:bound-A-B}. We denote $C:=V(G)\setminus A$. Then $C$ is an independent set of $G$, that is, every edge of $G$ is incident to a vertex of $A$. 
\end{claim}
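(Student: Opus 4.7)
The plan is to derive a contradiction with the extremality of $\lambda(G)$ by an edge-shifting argument. Suppose, for contradiction, that $E(G[C]) \neq \emptyset$; write $q := |E(G[C])| \ge 1$, and let $a^* \in A$ be a vertex of maximum Perron weight, so by Claim~\ref{cl:weight-a} we have $x_{a^*} \ge (10\sqrt{t})^{-1}$. Define
\[
    G' := G - E(G[C]) + \{a^* w_1, \dots, a^* w_q\},
\]
where $w_1, \dots, w_q$ are newly added isolated vertices becoming pendants of $a^*$ in $G'$, so that $|E(G')| = m$.

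First I would verify that $G'$ is $F$-free. Any embedding $\phi : V(F) \to V(G')$ either avoids every new pendant edge --- in which case $F \subset G - E(G[C]) \subset G$, contradicting $F$-freeness of $G$ --- or uses some edge $a^* w_i$. In the latter case, since $w_i$ has degree one in $G'$, the pre-image $\phi^{-1}(w_i)$ must be a leaf of $F$ whose unique neighbor maps to $a^*$. Since $|N_G(a^*) \cap B| \ge |B| - \varepsilon m = \Omega(m) \gg |F|$, we may replace each used $w_i$ with a fresh vertex of $N_G(a^*) \cap B$ not appearing in $\phi(V(F))$, yielding an embedding of $F$ into $G$, a contradiction.

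Next I would bound $\lambda(G')$ from below via a Rayleigh quotient. Let $\bm{x}$ be the unit Perron vector of $G$, set $\lambda := \lambda(G)$, and extend $\bm{x}$ to $\bm{x}'$ on $V(G')$ by $x'_{w_i} := x_{a^*}/\lambda$. A direct computation yields
\[
    \bm{x}'^{T} A_{G'} \bm{x}' = \lambda - 2 S_C + \frac{2 q x_{a^*}^2}{\lambda}, \qquad \|\bm{x}'\|^2 = 1 + \frac{q x_{a^*}^2}{\lambda^2},
\]
where $S_C := \sum_{uv \in E(G[C])} x_u x_v$. Hence $\lambda(G') > \lambda$ reduces to the key inequality $q x_{a^*}^2 > 2\lambda S_C$.

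The main obstacle is to establish this inequality, for which I would bound $S_C$ via the Perron eigen-equation. Summing $\lambda x_v^2 = x_v \sum_{u \sim v} x_u$ over $v \in A$ and over $v \in C$ separately and subtracting yields the identity
\[
    S_C = S_A + \tfrac{\lambda}{2}(1 - 2 X_A),
\]
where $S_A := \sum_{aa' \in E(G[A])} x_a x_{a'}$ and $X_A := \sum_{a \in A} x_a^2$. The Perron vector stability in Lemma~\ref{lem:Perron-Frobenius-stability}, applied to $(G, K_{A,B})$, gives $|1 - 2 X_A| = O(\varepsilon^{1/4})$ along with the bound $S_A = O(|A|)$ coming from $\max_a x_a \le (2|A|)^{-1/2} + O(\varepsilon^{1/4})$. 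Choosing $\varepsilon$ sufficiently small in terms of $|A| \le 16t$ should then give $q x_{a^*}^2 > 2\lambda S_C$, contradicting the extremality of $G$. This final step is the technical heart of the argument; getting the bound on $S_C$ sharp enough (relative to the polynomial dependence on $m$) will likely require bootstrapping the stability bound through the near-optimality $\lambda(G) \ge \sqrt{m}$ from~\eqref{eq-lower-sqrt-m}, or by separating typical edges (where $x_u, x_{u'} = O(\sqrt{t/m})$ and single-edge shifts already succeed) from a small family of atypical edges handled via the aggregated Perron identity.
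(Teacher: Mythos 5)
Your edge-shifting construction, the $F$-freeness check, and the Rayleigh-quotient reduction to $q\,x_{a^*}^2 > 2\lambda S_C$ all track the paper's argument; the paper verifies $F$-freeness more cleanly by noting $G'\subseteq G[A]\vee \ell K_1$ and invoking Lemma~\ref{lem:bipartite-free}, but your embedding argument can be made to work. The real divergence --- and the genuine gap --- is in how to bound $S_C$. Your aggregated Perron identity $S_C = S_A + \tfrac{\lambda}{2}(1-2X_A)$ is correct, but it cannot be closed: Lemma~\ref{lem:Perron-Frobenius-stability} only gives $|1-2X_A|=O(\varepsilon^{1/4})$, so the identity yields $S_C = O(t^2)+O(\varepsilon^{1/4}\sqrt{m})$, which tends to infinity with $m$ for any fixed $\varepsilon$, while the quantity you need to beat, $q\,x_{a^*}^2$, is $\Theta(q/t)$ and can be as small as $\Theta(1/t)$ when $q=1$. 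Worse, the identity is circular in the direction you need: $1-2X_A$ cannot be shown to be $O(1/\lambda)$ without an independent upper bound on $S_C$ --- which is the thing you are trying to prove.

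The missing ingredient is a local degree observation that the paper exploits. Let $E_b$ be the set of bad edges (those inside $C$) and $V_b$ the vertices they touch; since every edge of $G$ not in $E_b$ is incident to $A$, every $i\in V_b$ satisfies $d_i \le d_i' + |A| \le 32t\,d_i'$, where $d_i'$ is the number of bad edges at $i$. Combined with the eigen-equation bound $x_i^2 \le d_i/\lambda^2$, this gives $\sum_{i\in V_b} x_i^2 \le 64t|E_b|/\lambda^2$. A Brualdi--Hoffman bound on the bad subgraph then gives $\sum_{ij\in E_b} 2x_ix_j \le \sqrt{2|E_b|}\sum_{i\in V_b}x_i^2 \le 64t\sqrt{2}\,|E_b|^{3/2}/\lambda^2$, so that $2\lambda S_C = O(t\,|E_b|\sqrt{\varepsilon})$, which is dominated by $q\,x_{a^*}^2 = |E_b|\,x_{a^*}^2 = \Omega(|E_b|/t)$ once $\varepsilon$ is small in terms of $t$. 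Your "typical/atypical edge" intuition points in this direction, but without the observation that bad-edge endpoints have all their other neighbours confined to $A$, the degree (and hence $x_i$) of a vertex in $V_b$ is not a priori small, and the argument does not close.
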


\begin{poc} 
    Suppose on the contrary that $C$ is not an independent set. Let $E_b$ denote the set of edges in $G$ that is not incident to a vertex in $A$, and let $V_b$ be the set of vertices incident to any edge in $E_b$. 
    Then $|E_b| \le \varepsilon m$ as $d(G,K_{A,B})\le \varepsilon m$. 
    We construct a new graph $G'$ by removing all edges in $E_b$ from $G$ and adding $\abs{E_b}$ new pendent edges to a fixed vertex $a \in A$. Observe that $G'$ has $m$ edges, and $G'$ is a subgraph of $G[A] \lor \ell K_1$ for some integer $\ell$. By Lemma \ref{lem:bipartite-free}, we know that $G'$ is $F$-free. In what follows, we show that $\lambda (G') > \lambda (G)$.   
    Roughly speaking, removing the bad edges of $E_b$ decreases $\lambda(G)$ by about  $\sqrt{2\varepsilon m} \cdot O(\varepsilon)$, while adding new pendant edges increases it by about $\Omega(m^{-1/2}) \cdot \varepsilon m$. As a result, the spectral radius of $G$ increases.
    This leads to a contradiction with the maximality of $G$.  

    Suppose on the contrary that  $\lambda(G') \leq \lambda(G)$.   
  For notational convenience, we denote $\lambda = \lambda (G)$. Now, we construct a new vector $\bm{y}=(y_i)_{i\in V(G')}$ by setting $y_i = x_i$ if $i \in V(G)$, and $y_i = \frac{x_a}{\lambda}$ if $i$ is one of the new pendent vertices in $G'$. By the Rayleigh quotient, we have
    $$\bm{y}^T A_{G'} \bm{y} \leq \lambda \cdot \bm{y}^T \bm{y}.$$
    It is easy to compute that
    $$\bm{y}^T A_{G'} \bm{y} = \lambda - 2\sum_{ij \in E_b} x_i x_j + \frac{2 x_a^2}{\lambda} \abs{E_b}$$
    and 
        $$\lambda \cdot \bm{y}^T \bm{y} = \lambda + \frac{x_a^2}{\lambda} \cdot \abs{E_b}.$$
    So we must have
    \begin{equation} \label{eq-2xi-xj}
     \frac{x_a^2}{\lambda} \abs{E_b} \leq 
      \sum_{ij \in E_b} 2x_i x_j . 
    \end{equation} 
    On the other hand, we have
    $$\sum_{ij \in E_b} 2x_i x_j \leq \sqrt{2\abs{E_b}} \cdot \sum_{i \in V_b} x_i^2.$$
    By the Cauchy--Schwarz inequality, for each $i \in V_b$, we have
    $$x_i^2 = \frac{1}{\lambda^2} 
    \left(\sum_{j \in N_G(i)} x_j\right)^2 \leq \frac{1}{\lambda^2} d_i \sum_{j \in N_G(i)} x_j^2 \leq \frac{d_i}{\lambda^2}.$$
    For each $i \in V_b$, let $d'_i$ denote the number of edges in $E_b$ adjacent to $i$, and let $d_i$ denote the degree of $i$ in $G$. Clearly, we have $d_i \leq d'_i + |A| \leq 32t d'_i$ since $|A|\le 16t$. 
    Then 
    $$\sum_{i \in V_b} x_i^2 \leq \frac{1}{\lambda^2} \sum_{i \in V_b} 32t d_i' = \frac{64t |E_b|}{\lambda^2}.$$
    Combining the results above, we get 
    from (\ref{eq-2xi-xj}) that 
    $$ \frac{x_a^2}{\lambda} |E_b| 
    \le \sqrt{2|E_b|} \cdot \frac{64t |E_b|}{\lambda^2} .$$
    Rearranging the above inequality, we have
    $$x_a^2 \le \frac{64t\sqrt{2|E_b|}}{\lambda} \le 
    64t \sqrt{2\varepsilon},$$
    where the last inequality holds since $|E_b|\le \varepsilon m$ and $\lambda \ge \sqrt{m}$ as (\ref{eq-lower-sqrt-m}). 
   Setting $\varepsilon \le 10^{-8}t^{-4}$, we get $x_a^2 \le 10^{-2}t^{-1}$, which leads to a contradiction with Claim \ref{cl:weight-a}.  
\end{poc}

\begin{claim} \label{cl:A-less-F}
    $G[A]$ is a non-empty graph and $|A| < |F|$. 
\end{claim}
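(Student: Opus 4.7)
The plan is to prove the two assertions separately, harvesting the structural facts already assembled in Claims \ref{cl:bound-A-B}, \ref{cl:weight-a}, and \ref{lem:small-part-2} together with Lemma \ref{fact:less-vertices}.

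For the non-emptiness of $G[A]$, I would argue by contradiction. Suppose $G[A]$ contains no edge. Combined with Claim \ref{lem:small-part-2}, which tells us that $C := V(G)\setminus A$ is an independent set of $G$, this forces $G$ to be a bipartite graph with parts $A$ and $C$. For any bipartite graph with $m$ edges one has $\lambda^2(G) \le m$, with equality if and only if $G$ (modulo isolated vertices) is a complete bipartite graph; this follows from the singular value decomposition of the biadjacency matrix $B$, since $\lambda(G)^2 = \sigma_1(B)^2 \le \sum_i \sigma_i(B)^2 = \|B\|_F^2 = m$, with equality precisely when $B$ has rank one. Combining this with the lower bound $\lambda(G) \ge \sqrt{m}$ from \eqref{eq-lower-sqrt-m} forces $\lambda(G) = \sqrt{m}$ and hence that $G$ is a complete bipartite graph (recall we assumed $G$ has no isolated vertices). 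This contradicts the standing assumption of the proof that $G$ is not a complete bipartite graph, so $G[A]$ must be non-empty.

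For the bound $|A| < |F|$, the work is already done. Claim \ref{cl:bound-A-B} establishes that $G[A] \in \mathcal{M}_F$, and the previous paragraph shows that $G[A]$ is a non-empty graph. Lemma \ref{fact:less-vertices} then states that every non-empty member of $\mathcal{M}_F$ has order strictly less than $|F|$, so $|A| = |V(G[A])| < |F|$.

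Neither step presents a serious obstacle, but the first deserves a little care: the strictness of the bipartite inequality $\lambda^2 \le m$ is what actually forces the contradiction, so one has to invoke the rank-one characterization of the extremal case rather than the inequality alone. Once this is handled, the rest of the claim is a direct application of the already-proven structural lemmas, which is precisely why the preceding claims were set up in this particular sequence.
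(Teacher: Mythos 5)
Your proposal is correct and follows essentially the same route as the paper's proof: assume $G[A]$ is empty, invoke Claim \ref{lem:small-part-2} to conclude $G$ is bipartite, combine $\lambda(G)\le\sqrt{m}$ with the lower bound \eqref{eq-lower-sqrt-m} to force $G$ complete bipartite (a contradiction), and then apply Lemma \ref{fact:less-vertices} with Claim \ref{cl:bound-A-B} to get $|A|<|F|$. You are more explicit than the paper about the equality case of $\lambda^2\le m$ for bipartite graphs via the rank-one singular value characterization, but the underlying argument is the same.
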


\begin{poc}
     Suppose on the contrary that $G[A]$ is empty. 
     By Claim \ref{lem:small-part-2}, we see that $C$ is an independent set of $G$. Then $G$ is a bipartite graph on $A$ and $C$. Thus, we have $\lambda (G)\le \sqrt{m}$. Recall in (\ref{eq-lower-sqrt-m})  that $\lambda (G)\ge \sqrt{m}$. It follows that $\lambda (G)=\sqrt{m}$ and $G$ is a complete bipartite graph, which contradicts the assumption. So $G[A]$ is non-empty. 
     By Lemma \ref{fact:less-vertices}, we get $|A|< |F|$. 
\end{poc}

To prove Theorem \ref{thm:bipartite-rough}, 
it suffices to show the following claim. 

\begin{claim} \label{cl:at-most-one}
    At most one vertex of $C$ is not complete to $A$.
\end{claim}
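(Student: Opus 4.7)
My plan is to argue by contradiction: suppose distinct $u, v \in C$ are both not complete to $A$, labeled so that $x_u \geq x_v$. I will produce an $F$-free graph $G'$ with $m$ edges and $\lambda(G') > \lambda(G)$, contradicting the maximality of $G$. Every $G'$ constructed below has the form $G - vb + ua'$ for appropriate $a', b$, so $C$ remains independent and every non-$A$ vertex has its neighborhood in $A$; consequently $G' \subseteq G[A] \vee \ell K_1$ for some $\ell$, and $G'$ is $F$-free by Lemma~\ref{lem:bipartite-free}. A key tool will be Lemma~\ref{lem:Perron-Frobenius-stability} applied to $G$ and $H = K_{A, B}$: since $\lambda_1(H) \geq \sqrt{(1-\varepsilon)m}$ and $\lambda_2(H) = 0$, the pointwise bound $\bigabs{x_a - 1/\sqrt{2|A|}} \leq 4 \varepsilon^{1/4}$ holds for every $a \in A$, and in particular $x_a > 0$.

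I split into three cases by the relationship between $N(u)$ and $N(v)$. First, if $N(v) \not\subseteq N(u)$, pick $b \in N(v) \setminus N(u)$ and set $G' = G - vb + ub$; the Rayleigh inequality gives $\lambda(G') \geq \lambda(G) + 2 x_b (x_u - x_v) \geq \lambda(G)$. Strict inequality is an immediate contradiction, while equality makes $\bm{x}$ a Perron eigenvector of $G'$, and the eigen-equation at $u$ in $G'$ then gives $\lambda(G') x_u = \lambda(G) x_u + x_b$, forcing $x_b = 0$ and contradicting $x_b > 0$. Second, if $N(v) \subsetneq N(u)$, then $d_u > d_v$, and the eigen-equation yields $\lambda (x_u - x_v) = \sum_{w \in N(u) \setminus N(v)} x_w \geq 1/\sqrt{2|A|} - O(\varepsilon^{1/4})$, so $x_u - x_v = \Omega(1/\lambda)$. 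Taking any $a \in A \setminus N(u) \subseteq A \setminus N(v)$ and $b \in N(v)$ and setting $G' = G - vb + ua$, the Rayleigh bound decomposes as $\lambda(G') - \lambda(G) \geq 2 x_a (x_u - x_v) + 2 x_v (x_a - x_b)$, where the first term is $\Omega(1/\lambda)$ and the second is $O(\varepsilon^{1/4}/\lambda)$, strictly positive for $\varepsilon$ small enough.

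The third and most delicate case is $N(u) = N(v)$ (the twin case), which forces $x_u = x_v$ by the eigen-equation. Here the single-swap Rayleigh bound against $\bm{x}$ can fail, since $b \in N(u)$ may carry slightly larger Perron weight than $a \in A \setminus N(u)$ (because $b$ has $u, v$ as extra neighbors). The plan is to use a two-parameter perturbation: pick $a \in A \setminus N(u)$ and $b \in N(u)$, set $G' = G - vb + ua$, and take $\bm{z}$ with $z_u = x_u + \delta$, $z_v = x_v - \delta$, and $z_w = x_w$ otherwise. Using $x_u = x_v$, a short computation gives
\[ \|\bm{z}\|^2 = 1 + 2\delta^2 \quad \text{and} \quad \bm{z}^T A_{G'} \bm{z} = \lambda(G) + 2 x_u (x_a - x_b) + 2 \delta (x_a + x_b). \]
Optimizing the Rayleigh ratio with $\delta$ of order $(x_a + x_b)/(2\lambda)$ yields $\lambda(G') \geq \lambda(G) + (x_a + x_b)^2/(2\lambda) - 2 x_u |x_a - x_b| + O(\delta^3)$. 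By Perron stability, $(x_a + x_b)^2 \geq 2/|A| - O(\varepsilon^{1/4})$, while $|x_a - x_b| \leq 8 \varepsilon^{1/4}$ and $x_u = O(1/\lambda)$; the second-order gain $\Omega(1/(|A| \lambda))$ dominates the first-order loss $O(\varepsilon^{1/4}/\lambda)$ once $\varepsilon$ is chosen small in terms of $|A| \leq 16 t$, yielding $\lambda(G') > \lambda(G)$ and the desired contradiction. The main obstacle is precisely this twin subcase: the exchange symmetry $x_u = x_v$ defeats any one-sided perturbation of $\bm{x}$, and one must redistribute weight symmetrically between $u$ and $v$ in the test vector to trade a small first-order loss against a second-order gain of order $(x_a + x_b)^2/\lambda$, which is controlled via Lemma~\ref{lem:Perron-Frobenius-stability}.
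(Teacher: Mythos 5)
Your proof is correct, but it takes a genuinely different route from the paper's. The paper orders the two vertices by \emph{degree}, $d_u \geq d_v$, performs a single edge-swap $G' = G - vb + ua'$, and then compares $\lambda(G')$ to $\lambda(G)$ by a unified two-parameter optimization: it varies $(y_u,y_v)$ subject to $y_u^2 + y_v^2 = x_u^2 + x_v^2$, applies Cauchy--Schwarz to collapse the comparison to a single scalar $D = \bigl(\sum_{N'(u)}x_a\bigr)^2 + \bigl(\sum_{N'(v)}x_a\bigr)^2 - \bigl(\sum_{N(u)}x_a\bigr)^2 - \bigl(\sum_{N(v)}x_a\bigr)^2$, and uses Lemma~\ref{lem:Perron-Frobenius-stability} to show $D \approx \frac{2(d_u - d_v) + 2}{2|A|} > 0$. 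This dispatches every configuration, including the twin case $N(u) = N(v)$, in one stroke. You instead order by \emph{eigenvector entry}, $x_u \geq x_v$, and split into three cases by the containment relation between $N(u)$ and $N(v)$. Cases 1 and 2 are handled by plugging $\bm{x}$ itself into the Rayleigh quotient for $A_{G'}$ (with a supplementary eigen-equation argument to break ties in Case 1), and only the twin case requires the two-parameter perturbation $(x_u + \delta, x_v - \delta)$, which is essentially the paper's optimization restricted to a one-parameter diagonal slice. The paper's version is more compact because the $D > 0$ check subsumes all three of your cases simultaneously; your version is more explicit about where the real difficulty lies (the twin case, where the first-order Rayleigh gain vanishes and one must trade a first-order loss $O(\varepsilon^{1/4}/\lambda)$ against a second-order gain $\Theta\bigl((x_a + x_b)^2/\lambda\bigr)$). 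Both proofs lean on Lemma~\ref{lem:Perron-Frobenius-stability} to pin $x_a$ near $1/\sqrt{2|A|}$ for $a \in A$, and both require $\varepsilon$ to be small in terms of $|A| \le 16t$; the constant bookkeeping in your Cases~2--3 is of the same order as in the paper's $D$-estimate, so this is not a substantive difference.
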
  

\begin{poc}
Suppose for the sake of contradiction that two distinct vertices $u, v \in C$ are not complete to $A$.  Without loss of generality, we may assume that $d_u \geq d_v$, where $d_u$ and $d_v$ are the degree of $u$ and $v$, respectively. 
Let $G'$ be the graph obtained by deleting an arbitrary edge adjacent to $v$, and then adding an arbitrary edge from $u$ to any vertex in $A$ not adjacent to $u$. Observe that $G'$ has $m$ edges and is $F$-free by Lemma \ref{lem:bipartite-free}. 
In the sequel, we will show that $\lambda(G') > \lambda(G)$, thereby reaching a contradiction.

 Let $\bm{x}\in \mathbb{R}^{|V|}$ be the unit Perron--Frobenius eigenvector of $G$. Let $N(u), N(v), N'(u), N'(v)$ be the neighborhood of $u, v$ in $G$ and $G'$, respectively. We denote 
$$\Gamma := x_u \cdot \sum_{a \in N(u)} x_a + x_v \cdot \sum_{a \in N(v)} x_a.$$
We construct a new unit vector $\bm{y}$ by setting $y_i = x_i$ for every $i \in V(G)\setminus \{u, v\}$, and setting $y_u, y_v$ as nonnegative real numbers to maximize 
$$\Gamma' := y_u \cdot \sum_{a \in N'(u)} x_a + y_v \cdot \sum_{a \in N'(v)} x_a $$
subject to the constraint $y_u^2 + y_v^2 = x_u^2 + x_v^2$. 
Observe that 
$$\lambda(G') - \lambda(G) \geq \bm{y}^T A_{G'} \bm{y} - \bm{x}^T A_{G} \bm{x} = 2(\Gamma' - \Gamma).$$
To show $\lambda(G') - \lambda (G)>0$, 
we can choose the values of $y_u$ and $y_v$ in order to maximize $\Gamma'$.  
By the equality case of Cauchy--Schwarz's inequality, we have
$$\Gamma' = \sqrt{\left(\sum_{a \in N'(u)} x_a\right)^2 + \left(\sum_{a \in N'(v)} x_a\right)^2} \cdot \sqrt{x_u^2 + x_v^2}. $$
While the Cauchy--Schwarz inequality yields 
$$\Gamma \leq \sqrt{\left(\sum_{a \in N(u)} x_a\right)^2 + \left(\sum_{a \in N(v)} x_a\right)^2} \cdot \sqrt{x_u^2 + x_v^2}.$$
In order to show that $\lambda(G') - \lambda(G) > 0$, it suffices to show that
$$D := \left(\sum_{a \in N'(u)} x_a\right)^2 + \left(\sum_{a \in N'(v)} x_a\right)^2 - \left(\sum_{a \in N(u)} x_a\right)^2 - \left(\sum_{a \in N(v)} x_a\right)^2 > 0.$$
This follows from our earlier observation that the values $x_a$ for $a \in A$ are nearly identical. Recall that 
$|A||B|\ge e(A,B)\ge (1- \varepsilon )m$. 
By setting $H=K_{A,B}$ in Lemma \ref{lem:Perron-Frobenius-stability}, we have
$$\sum_{a \in A} \left(x_a - \frac{1}{\sqrt{2\abs{A}}}\right)^2 \leq \frac{8 \sqrt{\varepsilon m}}{\sqrt{\abs{A} \abs{B}}} \leq 8 \sqrt{\frac{\varepsilon}{1 - \varepsilon}} \leq 16 \varepsilon^{1/2}.$$
Thus, for any subset $N \subseteq A$, the Cauchy--Schwarz inequality implies that
$$\left| \left(\sum_{a \in N} x_a\right)  - \frac{\abs{N}}{\sqrt{2\abs{A}}} \right| \leq \abs{A}^{1/2} \cdot 4 \varepsilon^{1/4} .$$
On the other hand, we observe that 
\[ \left(\sum_{a \in N} x_a\right) + 
\frac{|N|}{\sqrt{2|A|}} \le 2\sqrt{2}\cdot 
\frac{|N|}{\sqrt{2|A|}}. 
\]
By Claim \ref{cl:A-less-F}, 
we have $|N|\le |A| < |F|$. 
Thus, it follows that 
$$\left| \left(\sum_{a \in N} x_a\right)^2 - \frac{|N|^2}{2|A|} \right| \leq 
4|A|^{1/2}  \varepsilon^{1/4} \cdot 
\frac{2|N|}{\sqrt{|A|}} < 
8|F|\varepsilon^{1/4}.$$
Therefore, we obtain
$$\left| D - \frac{(d_u + 1)^2 + (d_v - 1)^2 - d_u^2 - d_v^2}{2|A|} \right| \leq 32 |F| \varepsilon^{1/4}.$$
Since $\varepsilon >0$ is sufficiently small, it is clear that $32 \abs{F} \varepsilon^{1/4} < (2\abs{F})^{-1}$. On the other hand, we have $(d_u + 1)^2 + (d_v - 1)^2 - d_u^2 - d_v^2=2(d_u-d_v)+2 \geq 2$. So we conclude that
$$D > \frac{1}{|A|} - 32|F|\varepsilon^{1/4} 
> \frac{1}{2\abs{F}} > 0$$
as desired, reaching a contradiction.
\end{poc}

\subsection{Proof of Theorem \ref{thm-determine-M}}

Theorem \ref{thm:bipartite-rough} reduces the determination of the spectral extremal graph $G$ to a finite check. It remains to figure out which graph $G[A]\in \mathcal{M}_F$ maximizes the spectral radius. We think there is actually a caveat in this question. To see this, we approximate the spectral radius of graphs. 

Theorem \ref{thm-determine-M} follows from 
 Theorem \ref{thm:bipartite-rough} and 
 Lemma \ref{lem-estimate} as below.

\begin{lemma} \label{lem-estimate}
    Let $G$ be an $m$-edge graph with 
    $V(G)=A\sqcup C$ such that $|A|=O(1)$, 
    $G[A]\cong M$, $e(C)=0$  
     and all but at most one vertex of $C$ are complete to $A$. 
    Then 
    $$\lambda(G) = \sqrt{m} + \frac{e(M)}{v(M)} + O(m^{-1/2}),$$
    where $e(M)$ and $v(M)$ are the number of edges and vertices of $M$. 
\end{lemma}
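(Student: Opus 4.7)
The plan is to compute $\lambda := \lambda(G)$ precisely by exploiting that $G$ differs from $K_{A, C}$ by only $O(1)$ edges. Write $a := v(M) = |A|$, let $C_1 \subseteq C$ consist of vertices complete to $A$ with $c_1 := |C_1|$, and, if the exceptional vertex $v^* \in C \setminus C_1$ exists, set $y^* := x_{v^*}$ and $d_{v^*} := |N_A(v^*)|$. By the symmetry of $G$ on $C_1$, the Perron eigenvector $\mathbf{x}$ of $G$ is constant on $C_1$, say equal to $y$, and the eigenvalue equation at any $v \in C_1$ gives $y = S/\lambda$ with $S := \sum_{i \in A} x_i$. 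Since $d(G, K_{A, C}) \leq e(M) + a = O(1)$ and $\lambda_1(K_{A, C}) = \sqrt{ac} = \Theta(\sqrt{m})$, Lemma \ref{lem:Perron-Frobenius-stability} gives $x_i = \tfrac{1}{\sqrt{2a}} + O(m^{-1/4})$ for every $i \in A$; in particular $S = \Theta(1)$.

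Next I would bootstrap to an $O(m^{-1/2})$ estimate on the spread of $\mathbf{x}_A$. The eigenvalue equation reads
\[ \lambda x_i = (A_M \mathbf{x}_A)_i + c_1 y + \epsilon_i y^*, \]
where $\epsilon_i \in \{0,1\}$ indicates whether $v^* \sim i$. The first term satisfies $\lvert (A_M \mathbf{x}_A)_i \rvert \leq (a-1) \lVert \mathbf{x} \rVert_\infty = O(1)$, and $y^* = O(m^{-1/2})$ since $\lambda y^* = \sum_{i \in N_A(v^*)} x_i \leq S$ and $\lambda = \Theta(\sqrt{m})$. Subtracting this equation across two indices of $A$ therefore gives $|x_i - x_{i'}| = O(m^{-1/2})$, so $x_i = \bar{x} + O(m^{-1/2})$ with $\bar{x} := S/a$. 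Consequently $\sum_{j \in A} d_M(j) x_j = 2 e(M) \bar{x} + O(m^{-1/2})$, and summing the eigenvalue equation over $i \in A$, using $\lambda y = S$ and absorbing $d_{v^*} y^* = O(m^{-1/2})$, yields
\[ \lambda S = \frac{2 e(M)}{a} S + \frac{a c_1}{\lambda} S + O(m^{-1/2}). \]
Dividing by $S = \Theta(1)$ and multiplying by $\lambda$ produces the clean identity $\lambda^2 - \tfrac{2 e(M)}{a} \lambda = a c_1 + O(1)$.

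Finally, the edge-count $m = e(M) + a c_1 + d_{v^*}$ gives $a c_1 = m + O(1)$, so completing the square yields $(\lambda - e(M)/a)^2 = m + O(1)$, and taking square roots with $\lambda = \Theta(\sqrt{m})$ delivers $\lambda = \sqrt{m} + e(M)/v(M) + O(m^{-1/2})$, as claimed. The main subtlety lies in the bootstrap step: the raw $O(m^{-1/4})$ individual bound coming from Lemma \ref{lem:Perron-Frobenius-stability} does not suffice, since summing $\sum d_M(j) x_j$ at that accuracy would degrade the final $\lambda$-estimate to $O(m^{-1/4})$. The key observation is that while each $x_i$ is only controlled individually to $O(m^{-1/4})$, the pairwise differences $x_i - x_{i'}$ are controlled to $O(m^{-1/2})$, because the dominant ``$c_1 y$'' contribution to $\lambda x_i$ is independent of $i \in A$.
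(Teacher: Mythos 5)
Your proof is correct and takes a genuinely different route from the paper's, and in one place it is more careful. The paper argues via a matched pair of Rayleigh-quotient bounds: for the lower bound it tests with the Perron vector of $K_{A,C}$, and for the upper bound it splits $\lambda(G) = \bm{x}^T A_{G[A,C]}\bm{x} + \bm{x}^T A_{G[A]}\bm{x}$, bounds the bipartite part by $\sqrt{m}$, and evaluates $\bm{x}^T A_{G[A]}\bm{x}$ from the $\ell_2$-stability Lemma \ref{lem:Perron-Frobenius-stability}. You instead sum the eigenvalue equation over $A$ to obtain a self-consistent quadratic relation $\lambda^2 - \frac{2e(M)}{a}\lambda = ac_1 + O(1)$ and solve it. The crucial ingredient you add is the bootstrap: the eigenvalue equation at $i \in A$ forces the \emph{pairwise differences} $|x_i - x_{i'}|$ to be $O(m^{-1/2})$, a full power better than the pointwise $O(m^{-1/4})$ that Lemma \ref{lem:Perron-Frobenius-stability} alone provides. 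As you correctly observe, this matters: the paper's stated passage from $x_v = 1/\sqrt{2|A|}+O(m^{-1/4})$ to $\bm{x}^T A_{G[A]}\bm{x} = e(M)/v(M) + O(m^{-1/2})$ does not follow as written, since the cross-term $\sum_{j\in A} d_M(j)\bigl(x_j - 1/\sqrt{2|A|}\bigr)$ is a priori only $O(m^{-1/4})$ by Cauchy--Schwarz, and your bootstrap supplies exactly the extra control that closes this gap. Packaging everything into a quadratic in $\lambda$ is also advantageous because it avoids having to pin down the common value $\bar{x}$ on $A$ to accuracy $O(m^{-1/2})$, which the $\ell_2$-stability does not give and which a naive patch of the paper's estimate would seem to require. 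Two minor points you use implicitly and should record: that $\lambda = \Theta(\sqrt m)$ (e.g.\ from $\lambda(G) \ge \lambda(K_{A,C_1}) = \sqrt{ac_1} = \sqrt{m+O(1)}$) to divide through and select the positive root, and that $\bm{x}$ is constant on $C_1$ because the nontrivial component containing $A\cup C_1$ is connected and its automorphism group acts transitively on $C_1$.
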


\begin{proof}
    Let $V(G)=A\sqcup C$ be the vertex partition of $G$ as in the definition of Theorem \ref{thm:bipartite-rough}.  
    From Theorem \ref{thm:bipartite-rough}, 
    we know that $C$ is an independent set of $G$, and all but at most one vertex of $C$ are complete to $A$. 
    So $G$ differs from $K_{A,C}$ 
    in at most ${|A| \choose 2} + |A|=O(1)$ edges. 
    On the one hand, let $\bm{y}$ be the vector that assigns $\frac{1}{\sqrt{2|A|}}$ to the vertices in $A$ and $\frac{1}{\sqrt{2|C|}}$ to the vertices in $C$. Then $\bm{y}$ is a unit vector. The Rayleigh quotient gives 
    $$\lambda(G) \geq  \bm{y}^T A_G \bm{y} = \frac{e_G(A,C)}{\sqrt{|A||C|}} + \frac{e(M)}{v(M)}.$$
    Note that $m\le e_G(A,C) +{|A| \choose 2}$ 
    and $|A|(|C|-1)< e_G(A,C)\le |A| |C|$. It follows that 
    $e_G(A,C)= m - O(1)$ and $\sqrt{|A||C|}= \sqrt{m} + O(1)$. 
    We obtain that
    \begin{equation}
        \label{lower}
        \lambda(G) \geq \sqrt{m} + \frac{e(M)}{v(M)} + O(m^{-1/2}). 
    \end{equation}
    On the other hand, let $\bm{x}$ be the unit Perron--Frobenius eigenvector of $G$. Then 
    $$\lambda(G) = 2 \sum_{ij\in E(G)} x_ix_j  
    = 2\sum_{ij\in E(G[A,C])}x_ix_j + 2 \sum_{ij\in E(G[A])}x_ix_j. $$
    For the first term, 
    since $G[A, C]$ is bipartite, we have 
    $\lambda (G[A,C]) \le \sqrt{m}$ and 
    \[ 2\sum_{ij\in E(G[A,C])}x_ix_j 
    \leq \sqrt{m}. \] 
    For the second term,  setting $H = K_{A, C}$ in Lemma \ref{lem:Perron-Frobenius-stability}, 
    we have 
    $$\sum_{v \in A} \left(x_v - \frac{1}{\sqrt{2\abs{A}}}\right)^2 
    = O(m^{-1/2}).$$
    Then $x_v = \frac{1}{\sqrt{2\abs{A}}} + O(m^{-1/4})$ for every $v \in A$, which implies that
    $$\bm{x}^T A_{G[A]} \bm{x} = \frac{e(M)}{v(M)} + O(m^{-1/2}).$$
    Thus, we conclude that
    \begin{equation}
        \label{upper}
        \lambda(G) \leq \sqrt{m} + \frac{e(M)}{v(M)} + O(m^{-1/2}). 
    \end{equation} 
    Combining (\ref{lower}) and (\ref{upper}), we finish the proof.
\end{proof}

\section{Concluding remarks}

In this paper, we have introduced the edge-spectral stability method and provided a complete solution to the Brualdi--Hoffman--Tur\'{a}n problem for color-critical graphs. We showed that for any color-critical graph $F$ with $\chi (F)=r+1\ge 4$, if $m$ is sufficiently large and $G$ is an $m$-edge $F$-free graph, then $\lambda^2(G)\le (1- \frac{1}{r})2m$, where the equality holds if and only if $G$ is a regular complete $r$-partite graph. 
This result significantly extends Nikiforov's theorem \cite{Niki2002} and can be viewed as a spectral counterpart of Simonovits' theorem \cite{Sim1966}.  
Secondly, we provided a asymptotic formula of the Brualdi--Hoffman--Tur\'{a}n problem for almost-bipartite graphs. As applications, 
we determined the unique $m$-edge spectral extremal graph for every integer $m$ when we forbid complete bipartite graphs plus an edge, cycles plus an edge, and theta graphs, etc. This settles several problems and conjectures in spectral extremal graph theory.

At the end of this paper, we conclude some spectral extremal problems for readers. Recall that Bollob\'{a}s and Nikiforov \cite{BN2007jctb} 
proposed the following nice conjecture.

\begin{conj}[Bollob\'{a}s--Nikiforov \cite{BN2007jctb}] \label{conj-BN}
Let $G$ be a $K_{r+1}$-free graph of order at least $r+1$ 
with $m$ edges. Then 
\begin{equation*}
{  \lambda_1^2(G)+ \lambda_2^2(G) 
\le 2m\Bigl( 1-\frac{1}{r}\Bigr) }. 
\end{equation*}
\end{conj}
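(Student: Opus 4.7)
The plan is to extend the edge-spectral stability framework developed in this paper. Using the trace identity $\sum_i \lambda_i^2(G) = 2m$, the conjecture is equivalent to showing $\sum_{i \geq 3} \lambda_i^2(G) \geq \frac{2m}{r}$ for every $K_{r+1}$-free graph $G$; this is sharp at the regular complete $r$-partite graph $K_r[t]$, whose $r-1$ eigenvalues equal to $-t$ contribute exactly $(r-1)t^2 = \frac{2m}{r}$, while the remaining eigenvalues are zero.

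\emph{Stability regime.} First suppose $\lambda_1^2(G) \geq (1 - \frac{1}{r} - \delta) \cdot 2m$ for some small $\delta > 0$. Then Theorem~\ref{thm:nikiforov-stability} places $G$ at edit distance at most $\varepsilon m$ from a regular complete $r$-partite graph $H$, for which $\lambda_2(H) = 0$. The natural next step is to upgrade Lemma~\ref{lem:Perron-Frobenius-stability} to a stability statement for the second eigenvector, showing that $\operatorname{span}(\bm{x}_1(G), \bm{x}_2(G))$ is $\ell_2$-close to $\operatorname{span}(\bm{x}_1(H), \bm{x}_2(H))$ when $d(G, H)$ is small. Since $\lambda_1(H)^2 + \lambda_2(H)^2 = (1 - \frac{1}{r}) \cdot 2 e(H)$, this would transfer to $G$ with an error controllable by $\varepsilon$, and combined with Nikiforov's sharp bound $\lambda_1^2 \leq (1 - \frac{1}{r}) \cdot 2m$ the conjecture should follow in this regime after carefully tracking constants.

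\emph{Non-stability regime.} When $\lambda_1^2(G) \leq (1 - \frac{1}{r} - \delta) \cdot 2m$, the trivial estimate $\lambda_2 \leq \lambda_1$ only yields $\lambda_1^2 + \lambda_2^2 \leq 2 \cdot (1 - \frac{1}{r} - \delta) \cdot 2m$, which is far from the target. A promising approach here is to establish a \emph{subspace Nikiforov inequality}: for every two-dimensional subspace $W \subseteq \mathbb{R}^n$ and every $K_{r+1}$-free graph $G$, one has $\operatorname{tr}(P_W A_G^2 P_W) \leq (1 - \frac{1}{r}) \cdot 2m$, where $P_W$ denotes the orthogonal projection onto $W$. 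Applied to $W = \operatorname{span}(\bm{x}_1, \bm{x}_2)$, this statement is exactly the Bollob\'{a}s--Nikiforov conjecture.

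The subspace inequality above is where I expect the main obstacle to lie. The classical Motzkin--Straus inequality underpinning Nikiforov's original argument handles only nonnegative vectors, so the mixed signs of $\bm{x}_2$ obstruct a direct application. One might decompose $\bm{x}_2 = \bm{x}_2^+ - \bm{x}_2^-$ along its sign pattern, apply Motzkin--Straus on each support separately, and then control the cross-term involving $e(\operatorname{supp}(\bm{x}_2^+), \operatorname{supp}(\bm{x}_2^-))$ via a weighted Zykov-type symmetrization; alternatively, one could try induction on $r$ bootstrapping from the settled triangle-free case. Either route would demand a substantially new combinatorial ingredient beyond the tools developed in this paper, and this is the step that I expect to resist the most.
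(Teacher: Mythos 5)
The statement you are addressing is Conjecture~\ref{conj-BN}, which the paper states as an open problem of Bollob\'as and Nikiforov and does \emph{not} prove. The paper only records that the case $r=2$ was settled by Lin, Ning and Wu \cite{LNW2021}, and that Coutinho, Spier and Zhang \cite{CSZ2024} obtained an asymptotic version for graphs with large clique number; it then uses this open conjecture as motivation to pose a further conjecture for color-critical graphs. So there is no ``paper's own proof'' to compare against, and your proposal should be judged purely on whether it closes the conjecture on its own. It does not.

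The gap is the one you yourself flag at the end: the ``subspace Nikiforov inequality'' $\operatorname{tr}(P_W A_G^2 P_W)\le (1-\tfrac{1}{r})\cdot 2m$ applied to $W=\operatorname{span}(\bm{x}_1,\bm{x}_2)$ is not an auxiliary lemma but precisely the conjecture restated, since $\operatorname{tr}(P_W A_G^2 P_W)=\lambda_1^2+\lambda_2^2$ for that choice of $W$ (and taking the sup over all two-dimensional $W$ also returns $\lambda_1^2+\lambda_2^2$, so the ``for every $W$'' version is equivalent, not stronger). Your outline thus reduces the conjecture to itself in the non-stability regime, and the sign-splitting of $\bm{x}_2$ with a cross-term controlled by symmetrization is an idea, not an argument; you acknowledge as much. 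There is also a quieter problem in the stability regime: if $\lambda_1^2=(1-\tfrac{1}{r})2m-\eta$, you need $\lambda_2^2\le\eta$, i.e.\ an error linear in the slack $\eta$. Lemma~\ref{lem:Perron-Frobenius-stability} gives only a square-root dependence on the edit distance, and any second-eigenvector stability one could hope to extract from it would be worse still: for $H=K_r[t]$ one has $\lambda_2(H)=\lambda_3(H)=\cdots=0$ with multiplicity $r(t-1)$, so the second eigenspace is maximally degenerate and Davis--Kahan type perturbation bounds degenerate as well. So even the ``easy'' half of your case split is not actually within reach of the tools developed in this paper. In short, the proposal correctly identifies the trace reformulation and the sharpness at $K_r[t]$, but neither branch of the argument is completed, and the central inequality is the conjecture in disguise.
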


In 2021, Lin, Ning and Wu \cite{LNW2021} 
confirmed the base case $r=2$.  
The general case was asymptotically confirmed by Coutinho, Spier and Zhang \cite{CSZ2024} for graphs with large clique numbers. 
Motivated by our Theorem \ref{thm:color-critical-I} and Conjecture \ref{conj-BN}, we propose the following conjecture. 

\begin{conj} 
Let $F$ be a color-critical graph with $\chi (F)=r+1\ge 4$. For sufficiently large $m$, 
if $G$ is an $F$-free graph with $m$ edges, then 
\begin{equation*}
{  \lambda_1^2(G)+ \lambda_2^2(G) 
\le 2m\Bigl( 1-\frac{1}{r}\Bigr) }. 
\end{equation*}
\end{conj}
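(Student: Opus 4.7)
The plan is to extend the edge-spectral stability approach of Theorem \ref{thm:color-critical-I} from $\lambda_1^2$ to the sum $\lambda_1^2+\lambda_2^2$. A first observation is that the equality case is richer than for Theorem \ref{thm:color-critical-I}: besides a regular complete $r$-partite graph, any disjoint union of two regular complete $r$-partite graphs (of arbitrary sizes) also attains equality. This suggests proceeding by induction on the number of connected components. If $G=G_1\sqcup G_2$ with $\lambda_1(G_1)\ge \lambda_1(G_2)$, the top two eigenvalues of $G$ are either both from $G_1$ (apply the inductive hypothesis on $G_1$), or one from each (apply Theorem \ref{thm:color-critical-I} separately to each component and sum). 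A subtle caveat is that small components may not satisfy a Nikiforov-type bound, so this step likely requires either a connectedness assumption on $G$ or additional control on small subgraphs; for instance, when $F=W_6$ and $r=3$, the graph $K_4$ is $F$-free but $\lambda_1^2(K_4)=9>8=(1-\frac{1}{r})\cdot 2e(K_4)$, so some care is needed in the base case.

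Assuming $G$ is connected, the Perron vector $\bm{v}_1$ is strictly positive, so the second eigenvector $\bm{v}_2$ is sign-indefinite. I would partition $V(G)=V^+\sqcup V^-\sqcup V^0$ by the signs of the entries of $\bm{v}_2$. Since both $G[V^+]$ and $G[V^-]$ remain $F$-free, Theorem \ref{thm:color-critical-I} yields $\lambda_1(G[V^\pm])^2\le (1-\frac{1}{r})\cdot 2e(G[V^\pm])$. A Rayleigh-quotient computation on $\bm{v}_2$, discarding the negative contributions from cross-edges between $V^+$ and $V^-$, then gives
\[ \lambda_2(G)^2 \le \|\bm{v}_2|_{V^+}\|^2 \cdot \lambda_1(G[V^+])^2 + \|\bm{v}_2|_{V^-}\|^2 \cdot \lambda_1(G[V^-])^2 \le \Big(1-\frac{1}{r}\Big)\cdot 2\bigl(e(G[V^+])+e(G[V^-])\bigr). \]
Combined with $\lambda_1^2\le(1-\frac{1}{r})\cdot 2m$ from Theorem \ref{thm:color-critical-I}, this bound is too weak by a factor of up to two, so a companion refinement of $\lambda_1^2$ that absorbs the $e(G[V^+])+e(G[V^-])$ slack is required.

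For this refinement I would dichotomize via Theorem \ref{thm:nikiforov-stability}. In the near-extremal regime, where $G$ is $\varepsilon m$-close in edit distance to a regular complete $r$-partite graph $H$, the Hoffman--Wielandt inequality $\sum_i (\lambda_i(G)-\lambda_i(H))^2\le 2\,d(G,H)$ and $\lambda_2(H)=0$ force $\lambda_2(G)^2=O(\varepsilon m)$; a finer application of Lemma \ref{lem:Perron-Frobenius-stability} together with Theorem \ref{thm:color-critical-when-stable} should shave off the error by showing $\lambda_1(G)^2\le (1-\frac{1}{r})\cdot 2m - c\varepsilon m$, cancelling the $\lambda_2^2$ term. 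In the far-from-extremal regime, where $\lambda_1(G)^2\le(1-\frac{1}{r}-\delta)\cdot 2m$ for a fixed gap $\delta>0$, the remaining task is to verify that $e(G[V^+])+e(G[V^-])$ is of order at most $\delta m$.

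The main obstacle is the far-from-extremal connected case, where the $\bm{v}_2$-sign decomposition can have $e(G[V^+])+e(G[V^-])$ comparable to $m$ (as in a connectified version of two disjoint Tur\'{a}n graphs). To close the gap I expect one needs a two-vector strengthening of Nikiforov's weighted Motzkin--Straus argument: bound $(\bm{x}^T A\bm{x})^2+(\bm{y}^T A\bm{y})^2$ directly over orthonormal pairs $(\bm{x},\bm{y})$, using the color-critical structure of $F$ to control the interaction between the two weightings rather than applying Theorem \ref{thm:color-critical-I} to each separately. The $r=2$ case of Bollob\'{a}s--Nikiforov due to Lin--Ning--Wu \cite{LNW2021} and the asymptotic version of Coutinho--Spier--Zhang \cite{CSZ2024} should provide the spectral techniques needed to execute this step in the general color-critical setting.
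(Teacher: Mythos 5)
The statement you have been asked to prove is stated in the paper as an open \emph{conjecture} in the concluding remarks, not as a theorem; the authors give no proof, so there is no paper argument to compare against. Your proposal is therefore correctly read as a speculative sketch, and to your credit you are candid that it is one: you flag the small-component issue in the inductive step, you observe that the sign decomposition of $\bm{v}_2$ only yields $\lambda_1^2+\lambda_2^2\le (1-\nicefrac{1}{r})\cdot 2\bigl(m+e(G[V^+])+e(G[V^-])\bigr)$, which is off by up to a factor of two, and you identify the far-from-extremal connected regime as the genuine obstacle.

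A few remarks on the substance. Your observation that disjoint unions of two regular complete $r$-partite graphs of arbitrary sizes also attain equality is correct (the eigenvalues $(r-1)s$ and $(r-1)s'$ give $\lambda_1^2+\lambda_2^2=(r-1)^2(s^2+s'^2)=(1-\nicefrac{1}{r})\cdot 2m$), and it is an important structural point: it means the extremal family is a two-parameter family, so any stability route must locate $G$ near a \emph{pair} of Tur\'{a}n-type blocks rather than a single one. Theorem \ref{thm:nikiforov-stability} in this paper is tailored to $\lambda_1$ alone and gives a single approximate complete multipartite structure; it would not, as stated, certify the two-block structure one expects when $\lambda_2$ is of order $\sqrt{m}$, which is exactly your ``far-from-extremal connected'' regime. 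So the dichotomy in your third paragraph is not really a dichotomy between easy and hard cases: the hard case (large $\lambda_2$) is precisely where the available stability machinery says nothing. Your closing suggestion---a genuinely two-vector analogue of Nikiforov's weighted Motzkin--Straus argument, bounding $(\bm{x}^TA\bm{x})^2+(\bm{y}^TA\bm{y})^2$ over orthonormal pairs and exploiting color-criticality---is in the right spirit and is essentially what the Lin--Ning--Wu and Coutinho--Spier--Zhang results do for cliques, but extending it past cliques to general color-critical $F$ is the missing idea, and it is not supplied here. In short: the proposal is a reasonable outline of where one might start, but it does not close the conjecture, and the paper itself leaves it open.
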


%\section*{Declaration of competing interest}
%The authors declare that they have no conflicts of interest to this work.

%\section*{Data availability}
%No data was used for the research described in the article.

\section*{Acknowledgements} 
While writing this paper, Fang, Lin and Zhai \cite{FLZ2025-bipartite} independently obtained the same asymptotics as our Theorem \ref{thm-determine-M} for bipartite graphs $F$ by using a different method. 
 They are able to apply it to study the Brualdi--Hoffman--Tur\'{a}n problem for a 
much larger class of bipartite graphs. 
This paper was dedicated to Vladimir Nikiforov, whose beautiful work in spectral extremal graph theory has inspired the authors. 
This work was initiated after the 1st IBS ECOPRO student research program in fall 2023. The first and third authors would like to thank ECOPRO group for hosting them. 
The authors also thank Prof. Hao Huang for motivating this project, and Dr. Licheng Zhang for pointing us to references for $1$-planar graphs.

\frenchspacing

%\bibliographystyle{abbrv}
%\bibliography{ref.bib}

\end{document}